\let\nwline=\newline
\renewcommand{\newline}{\nwline\mbox{}}
\newtheorem{theorem}{Theorem}[section]
\newtheorem{lemma}[theorem]{Lemma}
\newtheorem{corollary}[theorem]{Corollary}
\newtheorem{proposition}[theorem]{Proposition}
\newtheorem{remark}[theorem]{Remark}
\newtheorem{remarks}[theorem]{Remarks}
\newenvironment{proof}{\normalsize {\sc Proof}:}{{\hfill $\Box$ \\}}
\newcommand{\Irr}{{\rm Irr}}
\newcommand{\IBr}{{\rm IBr}}
\newcommand{\Blk}{{\rm Blk}}
\newcommand{\Out}{{\rm Out}}
\newcommand{\GL}{{\rm GL}}
\newcommand{\SU}{{\rm SU}}
\newcommand{\Aut}{{\rm Aut}}
\newcommand{\SL}{{\rm SL}}
\newcommand{\SO}{{\rm SO}}
\newcommand{\Sp}{{\rm Sp}}
\newcommand{\Spin}{{\rm Spin}}
\newcommand{\Syl}{{\rm Syl}}
\def\bF{\mathbb {F}}
\def\oG{\overline {G}}
\def\oH{{\overline {H}}}
\def\oT{{\overline {T}}}
\def\oK{{\overline {K}}}
\def\oC{\overline {C}}
\def\Lie{{\mathcal {L}ie}}
\def\cf{\mathcal{F}}
\def\cE{\mathcal{E}}
\def\BZ{\hbox{\rm Z \kern-.65em Z}}
\def\cO{{\mathcal{O}}}
\begin{document}

\begin{center} {\bf Morita equivalence classes of blocks with extraspecial defect groups $p_+^{1+2}$
\footnote{This research was supported by
the Marsden Fund (of New Zealand), via award numbers UoA 1626 and UoA 2030, and the EPSRC (grant no. EP/T004606/1)}} \\
Jianbei An\footnote{Department of Mathematics, University of Auckland, Auckland, New Zealand. Email: j.an@auckland.ac.nz} and Charles W. Eaton\footnote{Department of Mathematics, University of Manchester, Manchester M13 9PL, United Kingdom. Email: charles.eaton@manchester.ac.uk} \\
\end{center}

\begin{abstract}
We characterise the Morita equivalence classes of blocks with extraspecial defect groups $p_+^{1+2}$ for $p \geq 5$, and so show that Donovan's conjecture and the Alperin-McKay conjecture hold for such $p$-groups. For $p=3$ we reduce Donovan's conjecture for blocks with defect group $3_+^{1+2}$ to bounding the Cartan invariants for such blocks of quasisimple groups. We apply the characterisation to the case $p=5$ as an example, to list the Morita equivalence classes of such blocks.
\end{abstract}



\section{Introduction and main results}
\label{Sec:notation}

Let $p$ be a prime and $(K,\cO,k)$ be a modular system with $k$ an algebraically closed field of characteristic $p$ (we will discuss the choice of $\cO$ further later). Donovan's conjecture, which may be stated over $\cO$ or $k$, predicts that for a given finite $p$-group $P$, there are only finitely many Morita equivalence classes of blocks of finite groups with defect groups isomorphic to $P$. Whilst significant progress on the conjecture has been achieved for the prime two, especially for abelian defect groups, at present for odd primes the only cases for which the conjecture is known are the cyclic $p$-groups  and those $p$-groups admitting no fusion system other than that of the $p$-group itself (see~\cite{el23} for a recent summary, and also~\cite{wiki} where progress on the conjecture is recorded). We apply the classification of finite simple groups to give a description of the Morita equivalence classes of $\cO$-blocks with extraspecial defect groups of order $p^3$ and exponent $p$ when $p \geq 5$, from which it follows that Donovan's conjecture holds for these $p$-groups, and further one may derive a classification of the Morita equivalence classes. As an illustration, we give the explicit classification of all Morita equivalence classes of blocks with defect group $5_+^{1+2}$. A further application is that the Alperin-McKay conjecture holds for blocks with defect group $p_+^{1+2}$ for $p \geq 5$.

In the following note that a Morita equivalence is called basic if it is induced by an endopermutation source bimodule. Our main result is:

\begin{theorem}
\label{main_theorem}
Let $p \geq 5$ be a prime. Let $G$ be a finite group and $B$ a block of $\cO G$ with defect group $D \cong p_+^{1+2}$. Then $B$ is basic Morita equivalent to a block of maximal defect of one of the following:
\begin{enumerate}[(a)]
\item $D \rtimes \hat{E}$, where $\hat{E}$ is a $p'$-central extension of a $p'$-subgroup $E$ of $GL_2(p)$ acting faithfully on $D$;
\item $(C_p \times C_p) \rtimes H$, where $SL_2(p) \leq  H \leq GL_2(p)$;
\item $H$ where $PSL_3(p) \leq H/Z(H) \leq \Aut(PSL_3(p))$ and $Z(H) \leq H'$;
\item $H$ where $PSU_3(p) \leq H/Z(H) \leq \Aut(PSU_3(p))$ and $Z(H) \leq H'$;
\item $2.HS$, $2.HS.2$, $3.McL$, $McL.2$, $2.Ru$, $Co_3$, $Co_2$ or $Th$, where $p=5$;
\item $He$, $He.2$, $3.O'N$, $O'N.2$, $3.Fi_{24}'$ or $Fi_{24}'.2$, where $p=7$;
\item $J_4$, where $p=11$;
\item the Fischer-Griess Monster $M$, where $p=13$.
\end{enumerate}

Further, the Morita equivalence preserves both the fusion of the block and the K\"{u}lshammer-Puig class of the blocks.

\end{theorem}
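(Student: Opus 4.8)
The plan is to combine the general reduction theory for Donovan's conjecture with the classification of saturated fusion systems on $D \cong p_+^{1+2}$ and with explicit block-theoretic input for quasisimple groups. First I would exploit the fact that a basic Morita equivalence preserves the defect group, the fusion system and the K\"ulshammer--Puig class, together with the standard reduction steps --- Fong--Reynolds reduction, reduction modulo central $p'$-subgroups, and passage to a suitable quotient --- each of which replaces $B$ by a basic-Morita-equivalent block without enlarging the problem. These reductions let me assume that $B$ is a block $C$ of a group $G$ with $O_{p'}(G) = Z(G)$ cyclic and $\bar G = G/Z(G)$ in reduced form, so that the analysis splits according to the structure of the generalised Fitting subgroup $F^*(\bar G)$: either the defect group is normal, or $F^*(\bar G)$ has quasisimple components of order divisible by $p$.

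In the normal-defect-group case I would invoke the K\"ulshammer--Puig structure theorem: such a block is basic Morita equivalent to a twisted group algebra of $D \rtimes E$, where $E$ is the inertial quotient acting faithfully on $D$, and untwisting the cocycle over a $p'$-central extension $\hat E$ of $E$ yields exactly the groups of type (a). The subtler local case, in which fusion is controlled by $N_{\cf}(Z(D))$ rather than by $N_{\cf}(D)$ --- so that a maximal subgroup $C_p \times C_p$ is essential and $Z(D)$ is not weakly closed --- is where the $Qd(p)$-type groups $(C_p \times C_p) \rtimes H$ with $\SL_2(p) \le H \le \GL_2(p)$ of (b) appear. Here I would pin the block down to one of these semidirect products by controlling the possible essential subgroups and automizers through the Ruiz--Viruel classification of saturated fusion systems on $p_+^{1+2}$ for $p \ge 5$, which provides a finite and explicit list and thereby organises the whole argument by fusion system.

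The heart of the proof, and the main obstacle, is the component case, where $\bar G$ is essentially quasisimple with defect group $p_+^{1+2}$; by the classification of finite simple groups this leaves a finite, explicit list of quasisimple groups to examine. For the defining-characteristic groups of Lie type I would identify the Sylow $p$-subgroup of $\SL_3(p)$ and $\SU_3(p)$ with the Heisenberg group $p_+^{1+2}$, determine the maximal-defect blocks, and track their behaviour under central extensions and diagonal, field and graph automorphisms using Jordan decomposition of blocks (Bonnaf\'e--Dat--Rouquier) and known results in defining characteristic, arriving at families (c) and (d); controlling the outer-automorphism action to fix the Morita class, and in particular verifying that the equivalences are \emph{basic} rather than merely Morita, is the delicate technical point. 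The remaining quasisimple groups --- the sporadic groups and their covers for the small primes $p = 5, 7, 11, 13$ --- I would treat individually, matching each to the list (e)--(h) and checking that no further classes arise. The closing assertion that the equivalence preserves the fusion and the K\"ulshammer--Puig class is then automatic, since once every link in the chain is shown to be basic, its endopermutation source bimodule induces an isomorphism of the associated local (source-algebra) structure.
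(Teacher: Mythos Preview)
Your reduction steps and the normal-defect-group case are broadly in line with the paper, but there is a genuine gap in the component case.  You write that after reduction ``$\bar G$ is essentially quasisimple with defect group $p_+^{1+2}$'', and then proceed to classify quasisimple groups admitting a block with this defect group.  But nothing in the reductions you list forces $D$ to lie inside the unique component $L_1$.  A priori one could have $D\cap L_1\cong C_p$ or $C_p\times C_p$, with the remaining factor of $p$ coming from an outer automorphism of $L_1$ of order $p$ --- typically a field automorphism when $L_1$ is of Lie type in non-defining characteristic.  Excluding this is not a formality: it occupies the bulk of the paper's technical work (its Sections~4 and~5), where one shows, case by case through the classical and exceptional families, that if a quasisimple group of Lie type over $\mathbb{F}_q$ (with $p\nmid q$) has a block with defect group $C_p$ or $C_p\times C_p$, then either there is no field automorphism of order $p$, or any such automorphism centralises the defect group and the resulting extension has \emph{abelian} defect groups of order $p^3$ rather than $p_+^{1+2}$.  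Your outline has no analogue of this step, and Bonnaf\'e--Dat--Rouquier does not supply it: that machinery compares blocks of groups of Lie type with blocks of smaller Levi-type subgroups, not with blocks of extensions by outer automorphisms.

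A secondary point: organising the argument via the Ruiz--Viruel classification of fusion systems on $p_+^{1+2}$ is a legitimate alternative to the paper's direct structural analysis of reduced pairs via $O_p(G)$ and $F^*(G)$, and it does neatly explain why the list (a)--(h) has the shape it has.  But it does not circumvent the gap above, since knowing the fusion system of $B$ does not by itself locate $D$ inside the component, nor does it determine the Morita class --- distinct blocks with the same fusion system and even the same K\"ulshammer--Puig class need not be Morita equivalent.  The paper instead obtains case~(b) from the purely group-theoretic situation $O_p(G)\cong C_p\times C_p$ in a reduced pair, and cases~(c)--(h) by invoking the prior classification (An--Eaton, 2011) of blocks of quasisimple groups with extraspecial defect groups once $D\le L_1$ has been secured.
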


\begin{corollary}
\label{Donovan}
Donovan's conjecture holds for $\cO$-blocks for $p_+^{1+2}$ for primes $p \geq 5$. That is, for each $p \geq 5$ there are only finitely many Morita equivalence classes amongst $\cO$-blocks of finite groups with defect group $p_+^{1+2}$.
\end{corollary}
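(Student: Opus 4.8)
The plan is to read off finiteness directly from Theorem~\ref{main_theorem}. Fix $p \geq 5$. A basic Morita equivalence is in particular a Morita equivalence, so the theorem tells us that every block $B$ of a finite group with defect group $D \cong p_+^{1+2}$ is Morita equivalent to a block of maximal defect of one of the groups listed in (a)--(h). Since a single finite group has only finitely many blocks, and hence only finitely many Morita equivalence classes of blocks, it suffices to show that for each fixed $p$ the families (a)--(h) together account for only finitely many Morita equivalence classes. I would argue family by family.

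Families (b) and (e)--(h) are immediate. In (b) the subgroup $H$ with $\SL_2(p) \leq H \leq \GL_2(p)$ and hence the semidirect product $(C_p \times C_p) \rtimes H$ range over finitely many isomorphism types, because $\GL_2(p)$ is a fixed finite group with finitely many subgroups; in (e)--(h) one is given explicit finite lists of groups for the relevant primes. So each of these contributes finitely many groups and therefore finitely many Morita classes.

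The families requiring care are (a), (c) and (d), where central extensions enter. For (c) and (d), the quotient $H/Z(H)$ lies between a fixed simple group $S$ (namely $\PSL_3(p)$ or $\PSU_3(p)$) and $\Aut(S)$, and since $\Out(S)$ is finite there are finitely many possibilities for $H/Z(H)$. The hypothesis $Z(H) \leq H'$ makes $H$ a stem extension of $H/Z(H)$, so $Z(H)$ is a quotient of the finite Schur multiplier of $H/Z(H)$; hence only finitely many isomorphism types of $H$ occur, and with them finitely many blocks. For (a), the subgroup $E \leq \GL_2(p)$ again ranges over finitely many possibilities, but the $p'$-central extensions $\hat{E}$ a priori form an infinite family. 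The point here is that a block of maximal defect of $D \rtimes \hat{E}$ lies over a linear character $\lambda$ of the central $p'$-subgroup $Z = \ker(\hat{E} \to E)$, which (since the action on $D$ factors through $E$) is central in $D \rtimes \hat{E}$; its Morita equivalence class is controlled by the resulting K\"{u}lshammer--Puig class $\alpha_\lambda \in H^2(E, k^\times)$, so that the block is Morita equivalent to a block of the twisted group algebra $\cO_{\alpha_\lambda}[D \rtimes E]$. As $H^2(E, k^\times)$ is finite and there are finitely many $E$, this too yields only finitely many Morita classes.

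Assembling these observations, the blocks of maximal defect arising in (a)--(h) fall into finitely many Morita equivalence classes for each fixed $p \geq 5$, and hence so do all blocks with defect group $p_+^{1+2}$. The only step needing genuine block-theoretic input, rather than elementary finite group theory, is the treatment of the unbounded central extensions in (a): there one reduces to the finiteness of $H^2(E, k^\times)$ through the K\"{u}lshammer--Puig class, whereas the remaining families reduce merely to the finiteness of the subgroup lattice of $\GL_2(p)$ and of the relevant Schur multipliers.
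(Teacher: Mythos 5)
Your proof is correct and takes essentially the same route as the paper, which treats the corollary as an immediate consequence of Theorem~\ref{main_theorem}: the only non-obvious point, the a priori unbounded central extensions in case (a), is handled in the paper's Section~\ref{background} exactly as you handle it, by identifying the Morita class of such a block with that of a twisted group algebra $\cO_\alpha(D \rtimes E)$ for $\alpha \in H^2(E,k^\times)$, which is a finite set for each of the finitely many $E \leq GL_2(p)$. Your stem-extension/Schur-multiplier observation for cases (c) and (d) is also the intended (and correct) reason those families are finite.
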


Numerical invariants for blocks with these defect groups have been studied in~\cite{he07}, although they are not completely determined, and in~\cite{hks14} it is shown that Olsson's conjecture holds for these blocks, i.e., that the number of irreducible characters of height zero is at most $[D:D']=p^2$. Principal blocks with such defect groups have been studied in~\cite{nu09} using the classification of finite simple groups, where the existence of a generalised perfect isometry is shown. Some of the methods in that paper motivate those used here.   

A consequence of Theorem \ref{main_theorem} is that the Alperin-McKay conjecture holds in this case. Here $k(B)$ denotes the number of irreducible character in $B$. We write $k_h(B)$ for the number of irreducible characters of height $h$, where the height of an irreducible character $\chi$ in a block with defect group $D$ is the non-negative integer $h$ such that $\chi(1)_p=p^h[G:D]_p$. 

\begin{corollary}
\label{alp_mckay}
Let $B$ be a block of a finite group with defect group $D \cong p_+^{1+2}$ for $p \geq 5$. Let $b$ be the Brauer correspondent of $B$ in $N_G(D)$. Then $k_0(B)=k_0(b)$. 
\end{corollary}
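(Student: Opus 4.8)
The plan is to use Theorem~\ref{main_theorem} to transport the problem to the explicit list of blocks occurring there, where the Alperin--McKay conjecture can be settled directly. The key feature of a \emph{basic} Morita equivalence, as opposed to an arbitrary one, is that it is induced by an endopermutation source bimodule and hence preserves the defect, and so the height, of every irreducible character; in particular it preserves $k_0$. Thus, writing $B_0$ for the block of maximal defect of one of the groups $G_0$ listed in (a)--(h) to which $B$ is basic Morita equivalent, I immediately obtain $k_0(B)=k_0(B_0)$.

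Next I would analyse the Brauer correspondent side. Since $b$ is a block of $N_G(D)$ with normal defect group $D$, the Külshammer--Puig theorem shows that $b$ is basic Morita equivalent to a twisted group algebra $\cO_\alpha[D\rtimes E]$, where $E=N_G(D,b_D)/DC_G(D)$ is the inertial quotient and $\alpha\in H^2(E,k^\times)$ is the Külshammer--Puig class; consequently $k_0(b)$ is a function of the triple $(D,E,\alpha)$ alone. By the final assertion of Theorem~\ref{main_theorem} the basic Morita equivalence between $B$ and $B_0$ preserves both the fusion system and the Külshammer--Puig class, so $B$ and $B_0$ have the same inertial quotient $E$ and the same class $\alpha$. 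Hence the Brauer correspondents $b$ and $b_0$ (the latter being the Brauer correspondent of $B_0$ in $N_{G_0}(D)$) share the triple $(D,E,\alpha)$ and are themselves basic Morita equivalent, giving $k_0(b)=k_0(b_0)$.

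Combining the two reductions, the desired equality $k_0(B)=k_0(b)$ is equivalent to $k_0(B_0)=k_0(b_0)$, that is, to the Alperin--McKay conjecture for each of the finitely many configurations of blocks $B_0$ in the list. In case (a) the defect group is normal, so $N_{G_0}(D)=G_0$, $b_0=B_0$, and there is nothing to prove; in case (b) the normal subgroup $C_p\times C_p$ reduces the statement to an explicit computation via Clifford theory. For the quasisimple and almost simple groups in (c) and (d) I would appeal to the verification of the (inductive) Alperin--McKay condition for $PSL_3(p)$ and $PSU_3(p)$ in defining characteristic, extended across the covers and the outer-diagonal automorphisms allowed in the statement, and for the finitely many sporadic cases in (e)--(h) the equality is a direct check against the known character-theoretic data.

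The main obstacle is this last verification for the two Lie-type families: because these range over all $p\ge 5$, one cannot argue by a finite computation and must instead control $k_0(B_0)$ and $k_0(b_0)$ uniformly in $p$, taking proper account of the central extensions and of the diagonal and field automorphisms permitted by the conditions $PSL_3(p)\le H/Z(H)\le\Aut(PSL_3(p))$ and $Z(H)\le H'$ (and their unitary analogues). The sporadic entries, by contrast, present no conceptual difficulty once the relevant Brauer correspondents have been identified.
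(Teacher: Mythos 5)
Your proposal is correct and follows essentially the same route as the paper: reduce via Theorem \ref{main_theorem}, using that the equivalence preserves heights so $k_0(B)=k_0(B_0)$, and that preservation of the inertial quotient and K\"ulshammer--Puig class forces the Brauer correspondents $b$ and $b_0$ to be Morita equivalent (both being basic Morita equivalent to the same twisted group algebra $\cO_\alpha(D\rtimes E)$), then verify the equality case by case on the list. The obstacle you flag for the Lie-type families (c) and (d) is exactly what the paper closes by citing Maslowski's equivariant character bijections~\cite{mas10}, combined with the observation that $\Out(PSL_3(p))$ and $\Out(PSU_3(p))$ have cyclic Sylow subgroups (so characters extend to their stabilizers); case (b) is handled by an explicit character count (Lemma \ref{p^2SL_2(p)C}), and cases (e)--(h) by the tables and GAP computations, just as you propose.
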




Two important components of the proof of Theorem \ref{main_theorem} are the following: the classification of blocks of quasisimple groups with extraspecial defect groups in~\cite{ae11}; an analysis of blocks of quasisimple groups with defect group $C_p \times C_p$ and a non-inner automorphism of order $p$, making use of~\cite{ADL}.

We also obtain detailed information about blocks with defect group $3_+^{1+2}$, found in Proposition \ref{reduced_classification:prop}. However, there are infinite classes of blocks with such defect groups within which Morita equivalence has yet to be decided, so we cannot at present extend Theorem \ref{main_theorem} to $p=3$. However, we can make the following reduction for Donovan's conjecture. For a block $B$ let ${\rm c}(B)$ be the largest entry of its Cartan matrix.

\begin{theorem}
\label{p=3reduce:theorem}
Suppose that there is $c \in \mathbb{N}$ such that ${\rm c}(B) \leq c$ for all quasisimple groups $G$ and all blocks $B$ of $kG$ with defect group $3_+^{1+2}$. Then Donovan's conjecture
holds for $\cO$-blocks with defect group $3_+^{1+2}$.
\end{theorem}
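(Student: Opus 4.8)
The plan is to deduce the conjecture from the general principle that Donovan's conjecture for a fixed defect group $P$ over $\cO$ follows once one bounds the Cartan invariants of every block with defect group $P$, and then to propagate the hypothesised bound for quasisimple groups to all blocks via the reduction in Proposition \ref{reduced_classification:prop}. Concretely, I would first record two standing facts for $D \cong 3_+^{1+2}$. The number of irreducible Brauer characters $l(B)$ is uniformly bounded: each family appearing in Proposition \ref{reduced_classification:prop} has $l(B)$ controlled by its inertial quotient or by the classification in \cite{ae11}, and in any case $l(B) \le k(B) \le |D| = 27$. Moreover the largest Cartan entry ${\rm c}(B)$ is an invariant of the Morita equivalence class, since the Cartan matrix is determined, up to relabelling of simple modules, by the module category. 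Because a basic Morita equivalence in particular preserves ${\rm c}(B)$, bounding ${\rm c}(B)$ over all blocks with defect group $D$ is equivalent to bounding it on the list of representatives furnished by Proposition \ref{reduced_classification:prop}.

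The next step is to bound ${\rm c}(B)$ on that list. For the local families ($D \rtimes \hat{E}$ and $(C_3 \times C_3) \rtimes H$) the relevant groups $E$ and $H$ lie inside the finite group $GL_2(3)$, so only finitely many such blocks occur and ${\rm c}(B)$ is trivially bounded; the sporadic entries are likewise finite in number. The remaining representatives are blocks of groups $H$ possessing a quasisimple normal subgroup $L$ (of type $PSL_3(q)$, $PSU_3(q)$, or one of the other quasisimple groups of \cite{ae11}) with $H/L$ of bounded order for the given defect group. Here I would invoke Clifford theory: a block of $H$ covers a block $\tilde{B}$ of $L$, and the Cartan invariants of the covering block are bounded in terms of those of $\tilde{B}$ together with the index $[H:L]$. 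Since the hypothesis bounds ${\rm c}(\tilde{B})$ across all quasisimple $L$, this yields a uniform bound on ${\rm c}(B)$ over the entire list, and hence for every block with defect group $3_+^{1+2}$.

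Finally I would combine the two bounds. With $l(B)$ and ${\rm c}(B)$ both bounded, the basic algebra of $B$ has dimension $\sum_{i,j} c_{ij} \le c\, l(B)^2$ bounded, so the general finiteness theorem for block algebras (Kessar's reduction over $k$, together with the lifting of basic Morita equivalences from $k$ to $\cO$ that underlies the use of basic equivalences throughout this paper; note that the Cartan invariants coincide over $k$ and $\cO$) yields only finitely many Morita equivalence classes of $\cO$-blocks with defect group $3_+^{1+2}$. The step I expect to be the main obstacle is the propagation of the bound from the strictly quasisimple case to the almost-simple and central-extension representatives: this needs both a clean Clifford-theoretic inequality for Cartan invariants under a bounded extension and a verification that, for defect group $3_+^{1+2}$, the outer part $H/L$ (in particular any field-automorphism contribution) is genuinely bounded. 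The analysis of \cite{ADL} of blocks with defect group $C_p \times C_p$ admitting a non-inner automorphism of order $p$ is the natural tool for controlling the associated local data and Külshammer--Puig classes in this step.
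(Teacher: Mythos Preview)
Your overall strategy---reduce to the list in Proposition~\ref{reduced_classification:prop}, bound Cartan invariants there, then invoke a finiteness principle---is in the right spirit, but two of the steps you substitute for the paper's argument do not go through.

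\textbf{Propagation to almost-simple groups.} You assert that in the non-local cases the reduced group $H$ has a quasisimple normal subgroup $L$ with $[H:L]$ bounded, and then bound ${\rm c}(B)$ via a Clifford-theoretic inequality involving $[H:L]$. But $[H:L]$ is \emph{not} bounded: in case (S12), for instance, $G_2(q) \leq G \leq \Aut(G_2(q))$ with $q=r^f$, and $[G:G_2(q)]$ can be any $3'$-divisor of $f$, which is arbitrary. The same occurs in (S10), (S11), (S13). The paper avoids this entirely by invoking Eisele's reduction \cite[Corollary~4.18]{ei21}: it suffices to bound the number of Morita classes of the covered block of $\langle D^g : g \in G\rangle$, and for reduced pairs this subgroup is exactly $D$, $(C_3\times C_3)\rtimes SL_2(3)$, or quasisimple. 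No passage through an unbounded extension is needed.

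\textbf{From $k$ to $\cO$.} Bounding $l(B)$ and ${\rm c}(B)$ bounds the dimension of the basic algebra and hence gives finitely many Morita classes \emph{over $k$}. Your appeal to ``lifting of basic Morita equivalences'' does not bridge the gap to $\cO$: the equivalences produced by the bounded-dimension argument are abstract $k$-algebra isomorphisms, not basic Morita equivalences, and a single $k$-Morita class can in principle contain infinitely many $\cO$-Morita classes. The paper's mechanism here is the $\cO$-Morita--Frobenius number: by Farrell--Kessar \cite{fk19} one has ${\rm mf}_\cO(B)\leq 4$ for every block of a quasisimple group, and \cite[Corollary~3.11]{eel20} then converts a simultaneous bound on ${\rm c}(B)$ and ${\rm mf}_\cO(B)$ into finiteness of $\cO$-Morita classes. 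This is precisely the point flagged after the statement of Theorem~\ref{p=3reduce:theorem}, and it is not recoverable from Cartan data alone.

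A minor remark: your bound $l(B)\leq k(B)\leq |D|$ invokes Brauer's $k(B)$-conjecture, which is open; Brauer--Feit does give some uniform bound, so this is not fatal, but the justification as written is circular.
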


Note that the fact that we need only examine Cartan invariants for these groups rather than Morita equivalence classes stems from Farrell and Kessar's determination of Morita-Frobenius numbers of blocks of quasisimple groups in~\cite{fk19}.

As an example of the application of Theorem \ref{main_theorem}, we give the complete classification of the sixty-two Morita equivalence classes of blocks with defect group $5_+^{1+2}$. Similar classifications could be produced for the remaining primes $p \geq 5$, although the determination of precise Morita equivalence classes for the blocks with normal defect groups would require ad hoc results in each case.

We do not study the groups $p_-^{1+2}$ of exponent $p^2$ in this article. Whilst many of the reductions for blocks with these defect groups are the same as for the exponent $p$ case, a major difference is that it would require the examination of blocks of quasisimple groups with cyclic defect groups of order $p^2$, which would involve a very different analysis. 

The structure of the paper is as follows. In Section \ref{background} we give some basic notation and prove some elementary general results that we will need later. We introduce notation necessary for the analysis of classical groups in Section \ref{Sec:notationcgps}. The analysis of blocks of quasisimple groups with defect group $C_p \times C_p$ and a non-inner automorphism of order $p$ is contained in Sections \ref{cgps} and \ref{egps}. We prove Theorem \ref{main_theorem} and Theorem \ref{p=3reduce:theorem} in Section \ref{reductions:section}. We analyse the case $p=5$ in full detail in Section \ref{p=5}. In Section \ref{counting_conjectures} we prove Corollary \ref{alp_mckay}.


\section{Notation and background results}
\label{background}

Let $p$ be a prime and let $\cO$ be a complete discrete valuation ring with algebraically closed residue field $k$ of characteristic $p$, so that $k=\mathcal{O}/J(\mathcal{O})$. Let $\mathcal{K}$ be the field of fractions for $\cO$. The results of this paper hold in this generality, however for a canonical statement of Donovan's conjecture one may take $k$ to be the algebraic closure of the field $\mathbb{F}_p$ of $p$ elements and $\cO$ to be the ring of Witt vectors for $k$. 

For $G$ a finite group, write $\Blk(G)$ for the set of blocks of $\mathcal{O}G$. Denote by
$\Irr(B)$ the set of irreducible $\mathcal{K}$-characters belonging to a block $B \in \Blk(G)$. We will often use $D=D(B)$ to denote a defect group of $B$. Denote by $\IBr(B)$ for the set of irreducible Brauer characters in $B$. Write $k(B)=|\Irr(B)|$ and $l(B)=|\IBr(B)|$. Write $B_0(G)$ for the principal block of $\cO G$.

A $B$-subgroup is a
pair $(Q,b_Q)$, where $Q$ is a $p$-subgroup of $G$ and $b_Q$ is a
block of $C_G(Q)$ with Brauer correspondent $(b_Q)^G=B$. The
$B$-subgroups with $|Q|$ maximized are called the Sylow
$B$-subgroups, and they are the $B$-subgroups for which $Q$ is a defect group 
for $B$. We denote by $N_G(D,b_D)$ the stabilizer in $N_G(D)$ of $(D,b_D)$ under conjugation.

The \emph{inertial quotient} of $B$ is $E=N_G(D,b_D)/DC_G(D)$, together with the action of $E$ on $D$. The inertial quotient is determined by the fusion system $\mathcal{F}=\mathcal{F}_{(D,b_D)}(G,B)$ for $B$, sometimes called the Frobenius category. We refer to~\cite[Section 8.5]{lin2} for background on this. Following the presentation in~\cite[Section 8.14]{lin2}, a K\"ulshammer-Puig class is an element of $H^2(\Aut_\mathcal{F}(Q))$, which is isomorphic to $H^2(E,k^\times)$ (see~\cite[Remark 8.14.3]{lin2}). 

The Morita equivalences occurring paper are all basic Morita equivalences or source algebra equivalences (a basic Morita equivalence is one induced by an endopermutation source bimodule, for which see~\cite[Section 9.10]{lin2}).

Now suppose $D \lhd G$ and let $E$ be the inertial quotient of $B$. By~\cite[Theorem 6.14.1]{lin2}, $B$ is basic Morita equivalent to a twisted group algebra $\mathcal{O}_\alpha(D \rtimes E)$, where $\alpha \in H^2(E,k^\times)$. In turn, this is basic Morita equivalent to a block of $\cO (D \rtimes \hat{E})$, where $\hat{E}$ is a central extension of $E$ by $Z \leq Z(\hat{E}) \cap \hat{E}'$ with $E$ acting faithfully and $Z$ acting trivially on $D$. Blocks of $\cO (D \rtimes \hat{E})$ correspond to irreducible characters of $Z$. We remark that the Morita equivalence classes are independent of choice of isoclinism class of central extension. Whilst choice of $E$ and $\alpha$ determines the Morita equivalence class, it is not always the case that different choices give distinct Morita equivalence classes. It may (and does) happen that distinct choices of $\alpha$ give the same Morita equivalence class, as we will see in Section \ref{p=5}.

The outer automorphism group of $p_+^{1+2}$ is isomorphic to $GL_2(p)$, and the possible inertial quotients correspond to the conjugacy classes of subgroups of $GL_2(p)$ of order prime to $p$.

\medskip

We now gather together some elementary results that will be needed later on.

\medskip

A main task in this paper is to show that if a quasisimple group of Lie type in characteristic different to $p$ has a block with defect group $C_p \times C_p$, then there cannot be a field automorphism of order $p$. The reason for this is roughly that if the field of definition of our group has exponent divisible by $p$, then such small but nontrivial defect groups as these cannot be present. The next lemma is important in showing this. Throughout, we denote by $m_p$ the $p$-part of an integer $m$.
 
Suppose $p$ is odd. Let $r$ be a prime distinct from $p$ and $q=r^f$ for some integer $f\geq 1$. Let $\epsilon\in 
\{+1, -1\}$ and let $e$ be the multiplicative order of $\epsilon q$ modulo $p$, so that $p\mid (q^e-\epsilon)$. Write $(q^e-\epsilon)_p=p^a$, where $a\geq 1$ is an integer.

\begin{lemma}
\label{Lem:a=1}
With the above notation, if $a=1$, then $p\nmid f$.
\end{lemma}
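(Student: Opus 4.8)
The plan is to prove the statement in its contrapositive form, by contradiction: I would assume $p \mid f$ and deduce that $(q^e-\epsilon)_p \geq p^2$, i.e. $a \geq 2$, contrary to hypothesis. The key observation driving this is that when $p$ divides $f$ the integer $q=r^f$ is a perfect $p$-th power, and raising to a $p$-th power forces an extra factor of $p$ into $q^e-\epsilon$.

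Concretely, write $f=pf'$ and set $s=r^{f'}$, so that $q=s^p$ and hence $q^e=(s^e)^p$. Putting $t=s^e$ gives $q^e-\epsilon=t^p-\epsilon$. Since $p$ is odd we have $\epsilon^p=\epsilon$, so I would rewrite this as $q^e-\epsilon=t^p-\epsilon^p$ and factor
$$
q^e-\epsilon=(t-\epsilon)\left(t^{p-1}+t^{p-2}\epsilon+\cdots+\epsilon^{p-1}\right).
$$
The hypothesis $p\mid q^e-\epsilon$ says $t^p\equiv\epsilon\pmod p$, and Fermat's little theorem gives $t\equiv t^p\equiv\epsilon\pmod p$; thus $p$ divides the first factor $t-\epsilon$. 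Reducing the second factor modulo $p$ and using $t\equiv\epsilon$, each of its $p$ summands is congruent to $\epsilon^{p-1}=1$, so the second factor is $\equiv p\equiv 0\pmod p$ and is likewise divisible by $p$. Therefore $p^2\mid q^e-\epsilon$, so $a\geq 2$, the desired contradiction.

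I would note that the assumption $r\neq p$ enters only to ensure $p\nmid t$, so that all the congruences involve a unit, and that the whole argument can be compressed into one application of the Lifting-the-Exponent lemma, $v_p(t^p-\epsilon^p)=v_p(t-\epsilon)+v_p(p)\geq 1+1=2$, where $v_p$ denotes the $p$-adic valuation. I do not expect a genuine obstacle: the only point requiring care is the treatment of $\epsilon$, specifically the identity $\epsilon^p=\epsilon$ valid for odd $p$, which is exactly what allows the single computation above to cover both the linear case $\epsilon=+1$ and the unitary case $\epsilon=-1$ at once, avoiding the odd/even order bookkeeping that a direct argument through the multiplicative order of $\epsilon q$ would otherwise demand.
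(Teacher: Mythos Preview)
Your argument is correct. The contrapositive reduction, the factorisation $t^p-\epsilon^p=(t-\epsilon)\sum_{i=0}^{p-1}t^{p-1-i}\epsilon^i$, the use of Fermat to get $t\equiv\epsilon\pmod p$, and the conclusion $p^2\mid q^e-\epsilon$ are all sound; the hypothesis $a\geq 1$ (i.e.\ $p\mid q^e-\epsilon$) and $\gcd(r,p)=1$ are exactly what you need and you invoke them properly.

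Your route differs genuinely from the paper's. The paper treats $\epsilon=+1$ and $\epsilon=-1$ separately: for $\epsilon=+1$ it introduces the multiplicative order $e'$ of $r$ modulo $p$, factors $f=e_1f_1$ through the relation between $e$ and $e'$, and then argues that the $p$-part of $q^e-1$ picks up an extra factor of $p$ when $p\mid f_1$; for $\epsilon=-1$ it passes through the unitary group $U(e,q)$ to establish that $e$ is odd, reduces to $(q^{2e}-1)_p=(q^e+1)_p$, and invokes the $\epsilon=+1$ case. By contrast, your single factorisation (equivalently the lifting-the-exponent step $v_p(t^p-\epsilon^p)=v_p(t-\epsilon)+1$) handles both signs at once via $\epsilon^p=\epsilon$ and never needs the auxiliary order $e'$ or the parity of $e$. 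Your approach is shorter and more self-contained; the paper's approach, while more circuitous, makes explicit the group-theoretic reason (oddness of $e$ in the unitary case) that the statement ``$p\mid q^e-\epsilon$'' in the setup is consistent.
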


\begin{proof}
Suppose first that $\epsilon = 1$. Let $e'$ be the multiplicative order of $r$ modulo $p$. Calculating in $C_p$, we have $q^{e'}=r^{fe'}=1$, so $e\mid e'$ and $e'=e e_1$ for some $e_1\geq 1$. But $r^{e f}=1$, so $e'\mid (ef)$ and write $ef =e' f_1$. Thus
\[
ef=e'f_1 =e e_1 f_1
\]
and $f=e_1 f_1$. Let $r_1=r^{e_1}$, so that $|r_1|=e$. If $p^b=(r_1^e-1)_p$ with $b\geq 1$, then $r_1^e= p^b m+1$ for some integer $m\geq 1$ with $\gcd (m, p)=1$. 
If $p\mid f$, then $p\mid f_1$ and $f_1=pf_2$ as $e_1\leq e'\leq p-1$.  
Thus  $q^e=(r_1^e)^{f_1}=((p^b m+1)^{f_2})^p$. In particular, $p^{b+1}\mid (q^e-1)$ and
hence $a\geq b+1\geq 2$, a contradiction. 

Suppose $\epsilon =-1$. Then $p\mid |U(e,q)|$ with $e$ minimal.
If $x\in U(e,q)$ has order $p$, then $Z(U(e,q))=C_{q+1}\leq C_{U(e,q)}(x)=U(1, q^e)$ and so $e$ is odd.  In particular, $q^e\equiv -1 \pmod {p}$ and $p\mid (q^e+1)$.
Since $p$ is odd, we have $p\nmid (q^e-1)$ and $(q^{2e}-1)_p=(q^e+1)_p$. 
So if $a=1$, then $(q^{2e}-1)_p=p$ and as shown above, $p\nmid 2f$ and hence $p\nmid f$. 
\end{proof}

\begin{lemma}
\label{subs_PSL2:lemma}
Let $H \leq GL_2(p)$ such that $H$ contains a Sylow $p$-subgroup $P$ of $GL_2(p)$. Then $P \lhd H$ or $SL_2(p) \leq H \leq GL_2(p)$.
\end{lemma}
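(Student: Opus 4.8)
The plan is to argue purely by counting Sylow $p$-subgroups, together with the fact that $\SL_2(p)$ is generated by transvections. Since $|\GL_2(p)| = p(p-1)^2(p+1)$, the group $P$ has order $p$ and is a Sylow $p$-subgroup of $\GL_2(p)$; as $P \le H$, it is also a Sylow $p$-subgroup of $H$. In particular $P \lhd H$ holds precisely when $P$ is the unique Sylow $p$-subgroup of $H$, so I may assume that $P$ is not normal in $H$ and then aim to deduce $\SL_2(p) \le H$.

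First I would determine the number of Sylow $p$-subgroups of $\GL_2(p)$. Each such subgroup fixes a unique $1$-dimensional subspace of $\bF_p^2$ pointwise and acts trivially on the quotient, so its normalizer is the Borel subgroup stabilizing that line, of order $p(p-1)^2$; hence $\GL_2(p)$ has exactly $p+1$ Sylow $p$-subgroups, one for each line.

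Now every Sylow $p$-subgroup of $H$ has order $p$ and is therefore a Sylow $p$-subgroup of $\GL_2(p)$, so the number $n$ of Sylow $p$-subgroups of $H$ is at most $p+1$. By Sylow's theorem $n \equiv 1 \pmod p$, and the only values in $\{1, \dots, p+1\}$ congruent to $1$ modulo $p$ are $1$ and $p+1$. Since $P$ is not normal in $H$ we have $n \neq 1$, so $n = p+1$ and $H$ contains all $p+1$ Sylow $p$-subgroups of $\GL_2(p)$. Their union is exactly the set of unipotent elements of $\GL_2(p)$ (the identity together with all transvections), and transvections generate $\SL_2(p)$; thus $\SL_2(p) \le H \le \GL_2(p)$, as required.

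The argument is short and there is no serious obstacle; the only points needing verification are standard, namely that the normalizer of a root subgroup is a Borel of order $p(p-1)^2$ (equivalently that $\GL_2(p)$ has $p+1$ Sylow $p$-subgroups) and that $\SL_2(p)$ is generated by its transvections. The one step where the specific structure of $\GL_2(p)$ is genuinely used is the observation that the congruence $n \equiv 1 \pmod p$ together with the a priori bound $n \le p+1$ leaves only the two possibilities $n = 1$ and $n = p+1$, which is precisely the dichotomy asserted in the statement.
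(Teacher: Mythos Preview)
Your argument is correct. The Sylow count is clean: since each Sylow $p$-subgroup of $H$ has order $p$ it is already a Sylow $p$-subgroup of $\GL_2(p)$, so their number is at most $p+1$, and the congruence $n\equiv 1\pmod p$ forces $n\in\{1,p+1\}$; in the second case $H$ contains every transvection and hence $\SL_2(p)$.

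The paper does not give a direct argument: it simply appeals to Huppert~[Chap.~II, 8.27], i.e.\ Dickson's classification of subgroups of $\PSL_2(p)$, and reads the statement off from the subgroup list. Your route is genuinely different and more elementary: it sidesteps the full subgroup classification entirely and uses only Sylow's theorem together with two standard facts (that a Borel is the normalizer of a root subgroup, and that transvections generate $\SL_2(p)$). The trade-off is that Dickson's theorem gives much finer information about subgroups of $\GL_2(p)$ should one need it, whereas your argument is tailored to exactly the dichotomy required here and is self-contained.
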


\begin{proof}
This is well known and follows easily from~\cite[Chap. II, 8.27]{hup}.
\end{proof}

\begin{lemma}
\label{GL_2(p)splitting:lemma}
Let $G$ be a finite group with $C_G(O_p(G)) \leq O_p(G) \cong C_p \times C_p$ and $|G|_p=p^3$, where $p$ is an odd prime. Then $G = O_p(G) \rtimes H$ for some $SL_2(p) \leq H \leq GL_2(p)$.
\end{lemma}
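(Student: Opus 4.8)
The plan is to first identify $\oG := G/O_p(G)$ as a subgroup of $GL_2(p)$ lying between $SL_2(p)$ and $GL_2(p)$, and then to split the extension by exhibiting an explicit involution whose centralizer is a complement.

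Writing $A = O_p(G) \cong C_p \times C_p$: since $A$ is abelian it lies in $C_G(A)$, and together with the hypothesis $C_G(A) \leq A$ this gives $C_G(A) = A$, so conjugation embeds $\oG = G/A$ faithfully into $\Aut(A) \cong GL_2(p)$; I identify $\oG$ with its image. Because $|G|_p = p^3$ and $|A| = p^2$, the $p$-part of $|\oG|$ is $p = |GL_2(p)|_p$, so $\oG$ contains a Sylow $p$-subgroup $\oP$ of $GL_2(p)$. I would then apply Lemma \ref{subs_PSL2:lemma}: either $\oP \lhd \oG$ or $SL_2(p) \leq \oG \leq GL_2(p)$. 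The first alternative is impossible, since the full preimage in $G$ of a normal $\oP$ would be a normal $p$-subgroup of order $p^3$, contradicting $|O_p(G)| = p^2$; hence $SL_2(p) \leq \oG \leq GL_2(p)$. This already yields the group $H \cong \oG$ demanded by the statement, provided I can find it as a complement to $A$.

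The substantive step is splitting $1 \to A \to G \to \oG \to 1$, and this is where I would use that $p$ is odd. The central involution $-I \in SL_2(p) \leq \oG$ acts on $A$ by inversion. Its preimage in $G$ has order $2p^2$ and splits by Schur--Zassenhaus, so I may pick an involution $t \in G$ mapping to $-I$; then $t$ inverts $A$. I claim $H := C_G(t)$ is the required complement. Indeed $C_G(t) \cap A = C_A(t) = 1$ since $A$ has odd order and is inverted by $t$; and $C_G(t)$ surjects onto $\oG$, because for any $g \in G$ the involution $g t g^{-1}$ again maps to the central element $-I$, hence lies in the coset $tA$, and all involutions in $tA$ are $A$-conjugate to $t$ (a one-line computation using that squaring is a bijection of $A$), so after adjusting $g$ by a suitable element of $A$ I obtain a preimage of $\bar g$ centralizing $t$. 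Counting orders gives $A \cap H = 1$ and $AH = G$, whence $G = A \rtimes H$ with $H = C_G(t) \cong \oG$ and $SL_2(p) \leq H \leq GL_2(p)$.

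I expect the main obstacle to be the splitting rather than the identification of $\oG$: the whole argument hinges on the central involution $-I$ acting by inversion, which is precisely what the hypothesis ``$p$ odd'' provides and what fails at $p = 2$. This is the elementary shadow of the vanishing of $H^2(\oG, A)$, which one sees after restricting to $SL_2(p)$ from the fact that $-I$ acts as $-1$ on $H^2(C_p, A)$.
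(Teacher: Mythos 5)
Your proof is correct. The first half --- identifying $G/O_p(G)$ with a subgroup of $\Aut(O_p(G)) \cong GL_2(p)$ containing a Sylow $p$-subgroup, ruling out the normal-Sylow alternative of Lemma \ref{subs_PSL2:lemma} because its preimage would be a normal $p$-subgroup of $G$ of order $p^3$, and concluding $SL_2(p) \leq G/O_p(G) \leq GL_2(p)$ --- is exactly the paper's argument. Where you genuinely diverge is the splitting step: the paper disposes of it in one line by citing~\cite[Theorem 1]{cu76}, a splitting theorem for extensions admitting a central element of the quotient acting fixed-point-freely on the abelian kernel, noting that $Z(SL_2(p))$ fixes only the zero vector of the natural module; you instead prove the splitting by hand, lifting $-I$ to an involution $t$ via Schur--Zassenhaus applied to the preimage of $\langle -I \rangle$, and verifying that $C_G(t)$ is a complement (the $A$-conjugacy of all elements of the coset $tA$, which uses bijectivity of squaring on $A$, gives surjectivity of $C_G(t)$ onto $G/A$, while fixed-point-freeness of inversion on a group of odd order gives $C_G(t) \cap A = 1$). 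Both arguments pivot on the identical observation that the central involution of $SL_2(p)$ inverts $C_p \times C_p$ without nonzero fixed points, so conceptually they are the same; what your version buys is self-containedness --- your explicit complement $C_G(t)$ is in effect the concrete instance of the ``central element kills cohomology'' mechanism that underlies Curran's theorem --- while the paper's citation is shorter and the cited result covers the general situation of an arbitrary central fixed-point-free action.
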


\begin{proof}
We may embed $G/O_p(G)$ into $\Aut(O_p(G)) \cong GL_2(p)$. Let $P \in \Syl_p(G)$. Since $O_p(G) \neq P$, we must have that $G/O_p(G)$ is isomorphic to a subgroup of $GL_2(p)$ of order divisible by $p$ that does not normalise a Sylow $p$-subgroup. Hence by Lemma \ref{subs_PSL2:lemma} $SL_2(p) \leq G/O_p(G)$ and it suffices to show that $G$ must be a split extension. This follows from~\cite[Theorem 1]{cu76} by observing that $Z(SL_2(p))$ fixes only the zero element of the natural module.
\end{proof}


\section{Notation for classical groups}
\label{Sec:notationcgps}

Let $V$ be a linear, unitary, non-degenerate orthogonal or
symplectic space over the field $\bF_q$, where $q=r^f$ with $\gcd(r,p)=1$. We will follow the notation of \cite{An94},
\cite{B86}, \cite{FS82} and~\cite{FS89}.

If $V$ is orthogonal, then there is a choice of equivalence classes
of quadratic forms. Write $\eta(V)$ for the type of $V$ as defined
in~\cite{FS89}, so $\eta(V)=+$ or $-$. In addition, let $\eta(V)=+$ or $-$ according as $V$ is linear or unitary. If $V$ is non-degenerate orthogonal or symplectic, then denote by $I(V)$ the group of isometries on $V$ and let $I_0(V)=I(V)\cap \SL(V)$.

If $V$ is symplectic, then $I(V)=I_0(V)={\Sp}_{2n}(q)$.

If $V$ is a $(2n+1)$-dimensional orthogonal space, then
$I(V)=\langle -1_V\rangle\times  I_0(V)$ with
$I_0(V)=\SO_{2n+1}(q)$.

If $V$ is a $2n$-dimensional orthogonal space, then
$I(V)=O^\eta(V)=O^\eta_{2n}(q)$ and $I_0(V)=\SO^\eta_{2n}(q)$.

If $V$ is a $2n$-dimensional non-degenerate orthogonal or symplectic
space, then denote by $J_0(V)$ the conformal isometries of $V$ with
square determinant. If $V$ is orthogonal of dimensional at least
two, then write $D_0(V)$ for the special Clifford group of $V$ (cf.
\cite{FS89}).

Denote by $\GL^+(V)$ the general linear group $\GL(V)$ and $\GL^-(V)$ the
unitary group $U(V)$.

Let $G=\GL^\eta(V)$ or $I(V)$. Write $\cf_q=\cf_q(G)$ for the set of
(monic) polynomials serving as elementary divisors for semisimple
elements of $G$ (cf. \cite[p.6]{An94}). If $G=\GL^{-}(V)$, then
partition $\cf_q$ as in~\cite{FS82} by writing:
$$\begin{array}{l}
\cf_1=\{ \Delta \in \cf_q :  \Delta \ {\rm irreducible}, \
\Delta \neq X, \ \Delta={\tilde{\Delta}} \} , \\
\cf_2=\{ \Delta {\tilde{\Delta}} \in \cf_q : \Delta \ {\rm irreducible}, \
\Delta \neq X, \ \Delta \neq {\tilde{\Delta}} \} ,
\end{array}$$
where ${\tilde{\Delta}}$ is the monic irreducible polynomial whose roots are
the $q$-th power of those of $\Delta$. If $G=I(V)$, then partition $\cf_q$ as
in~\cite{FS89} by writing: $$\begin{array}{l}
\cf_0=\{X \pm 1 \} , \\
\cf_1=\{ \Delta \in \cf_q \setminus
 \cf_0: \Delta \ {\rm irreducible}, \
\Gamma \neq X, \ \Delta=\Delta^* \} , \\
\cf_2=\{ \Delta\Delta^* \in \cf_q \setminus
 \cf_0: \Delta \ {\rm irreducible}, \
\Delta \neq X, \ \Delta \neq \Delta^* \} ,
\end{array}$$
where $\Delta^*$ is the monic irreducible polynomial whose roots are the
inverses of those of $\Delta$. Let $d_\Gamma$ be the degree of
$\Gamma\in\cf_q$, and define the reduced degree $\delta_\Gamma$ as
in~\cite{An94} and \cite{FS89}, so that $\delta_\Gamma = d_\Gamma$ if
$G=\GL^\eta(V)$ or $\Gamma \in \cf_0$, and $\delta_\Gamma =
\frac{1}{2}d_\Gamma$ if $G=I(V)$ and $\Gamma\in \cf_1\cup\cf_2$.


For each $\Gamma \in \cf_q$, define
$\epsilon_\Gamma=1$ when $G=\GL(V)$, otherwise define
$$\epsilon_\Gamma = \left\{ \begin{array}{cll}
\epsilon & if & \Gamma \in \cf_0 , \\
-1 & if & \Gamma \in \cf_1 , \\
1 & if & \Gamma \in \cf_2 . \end{array} \right.$$

Let $e_\Gamma$ be the multiplicative order of $\epsilon_\Gamma
q^{\delta_\Gamma}$ modulo $p$. Thus we may write
$e_\Gamma\delta_\Gamma=ep^{\alpha_\Gamma}\delta_\Gamma'$ for some
$\alpha_\Gamma$ and $\delta_\Gamma'$ with  $p\nmid \delta_\Gamma'$.

Let $\cf_q^{p'}=\cf_q^{p'}(G)$ be the subset of $\cf_q$ serving as 
elementary divisors for all semisimple $p'$-elements of $G$.

Given a semisimple element $s \in G$, there is a unique orthogonal
decomposition $V=\sum_{\Gamma \in \cf_q} V_\Gamma(s)$, with $s =
\prod_{\Gamma \in \cf_q} s_\Gamma$, where the $V_\Gamma(s)$ are
nondegenerate subspaces of $V$ and $s_\Gamma \in U(V_\Gamma(s))$,
$\GL(V_\Gamma(s))$ or $I(V_\Gamma(s))$ (depending on $G$) has minimal
polynomial $\Gamma$. (For simplicity, we write a direct sum as orthogonal if $G=\GL(V)$). The decomposition is called the primary decomposition of
$s$. If $s=s_\Gamma$, then $s$ is called {\it primitive}.
Write $m_\Gamma(s)$ for the multiplicity of $\Gamma$ in
$s_\Gamma$. We have $C_G(s)=\prod_{\Gamma \in \cf_q} C_\Gamma(s)$,
where $C_\Gamma(s)=I(V_\Gamma(s))$ or
$\GL^{\epsilon_\Gamma}(m_\Gamma(s),q^{\delta_\Gamma})$ as appropriate.


\section{Classical groups}
\label{cgps}

In this and the following section we analyse blocks of quasisimple groups in non-defining characteristic with defect group $C_p$ or $C_p \times C_p$. We show that if such a block exists, then there cannot be a field automorphism of order $p$. This will help to rule out the existence of blocks of automorphism groups of such groups with extraspecial defect groups.

\bigskip
Suppose $\eta\in \{+1, -1\}$ and $G=\GL_n^\eta(q)=\GL^\eta(V)$, and let $B_G$ be a
$p$-block of $G$ with a defect group $D_G$ and label $(s,\kappa)$. Then
$$
V =V_0\perp V_+,\quad D_G =D_0\times D_+,\quad  s=s_0\times s_+,
\eqno(\ref{cgps}.1)
$$
where $V_0=C_V(D_G)$, $V_+=[D_G, V]$, $s_0\in G_0= \GL^\eta(V_0)$ and
$s_+\in G_+:=\GL^\epsilon(V_+)$, $D_0=\langle 1_{V_0}\rangle$ and $D_+\leq G_+$. We also denote $\GL^\eta(V)$ by $G(V)$ and
$\SL^\eta(V)$ by $S(V)$.


\begin{proposition}
\label{Prop:slu}
Let $K_u=\SL^\eta(V)=\SL^\eta_n(q)$, $G=\GL^\eta(V)$, $K=K_u/Z$
for some $Z\leq Z(K_u)$, $B\in\Blk(K)$ with Sylow $B$-subgroup
$(D, b_K)$. Take $B_u\in\Blk(K_u)$ dominating $B$, 
$B_G\in\Blk(G)$ covering $B_u$, $D_u=D(B_u)$ such that $D_uZ/Z=D$ and
$D_G=D(B_G)$ such that $D_u=D_G\cap  K_u$. Suppose that $D \cong C_p$ or $C_p\times C_p$. Then either any block of $G/Z$ covering $B$ has a cyclic defect group, or
$a=1$ and hence $K$ has no field automorphism of order $p$.
\end{proposition}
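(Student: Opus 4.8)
The plan is to read off the defect group $D_G$ of $B_G$ from the Fong--Srinivasan description of blocks of $G=\GL^\eta(V)$, and then track what happens to it under the two operations separating $B$ from $B_G$: restriction to $K_u=\SL^\eta(V)$ and passage to the central quotient $K=K_u/Z$. Using the primary decomposition $(\ref{cgps}.1)$ one has $D_G=D_+=\prod_\Gamma D_\Gamma$, where $D_\Gamma$ is a defect group of a unipotent block of $C_\Gamma(s)=\GL^{\epsilon_\Gamma}(m_\Gamma(s),q^{\delta_\Gamma})$, isomorphic to a Sylow $p$-subgroup of $\GL^{\epsilon_\Gamma}_{e_\Gamma w_\Gamma}(q^{\delta_\Gamma})$, $w_\Gamma$ being the weight of the $\Gamma$-part. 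I would record the two facts I need about these Sylow subgroups: for weight $w_\Gamma=1$ the group is cyclic $\cong C_{p^{a_\Gamma}}$, while for $2\le w_\Gamma<p$ (all we meet, as $p\ge 5$) it is homocyclic $\cong (C_{p^{a_\Gamma}})^{w_\Gamma}$; here $a_\Gamma=(q^{e_\Gamma\delta_\Gamma}-\epsilon_\Gamma^{\ast})_p$, the sign $\epsilon_\Gamma^\ast$ chosen so that $p\mid q^{e_\Gamma\delta_\Gamma}-\epsilon_\Gamma^\ast$.

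Next I would set $\oD=D_GZ/Z$, the defect group of the block $\oB$ of $G/Z$ dominated by $B_G$, and note $D=(D_G\cap K_u)Z/Z$. Since $\oD/D\cong D_G/(D_G\cap K_u)$ embeds in $G/K_u\cong C_{q-\eta}$, the quotient $\oD/D$ is cyclic; this is the feature producing the stated dichotomy. Assume now that some block of $G/Z$ covering $B$ has non-cyclic defect group, i.e.\ that $\oD$ is non-cyclic, and deduce $a=1$. As quotients of cyclic groups are cyclic, $D_G$ is then non-cyclic, so by the product description either two distinct $\Gamma$ contribute or a single $\Gamma$ has $w_\Gamma\ge 2$; in either case $D_G$ has $p$-rank at least $2$ and contains a homocyclic subgroup $C_{p^{a_\Gamma}}\times C_{p^{a_{\Gamma'}}}$.

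The heart of the argument is then to force these $a_\Gamma$ to equal $1$, and the case split $e\ge 2$ versus $e=1$ organises the bookkeeping. When $e\ge 2$ we have $p\nmid q-\eta=[G:K_u]$ and $p\nmid|Z|=\gcd(n,q-\eta)$, so $D_G\le K_u$ and $Z$ is a $p'$-group; hence $D\cong\oD\cong D_G$, and $D\cong C_p\times C_p$ forces each contributing $D_\Gamma$ to have exponent $p$, i.e.\ $a_\Gamma=1$. When $e=1$ one works inside the diagonal torus: computing $D_G\cap K_u$ as the kernel of the determinant on $\prod_\Gamma(C_{p^{a_\Gamma}})^{w_\Gamma}$ and factoring out $D_G\cap Z$ shows $D$ is again homocyclic of exponent $p^{a_\Gamma}$ on the surviving factors, so $D\in\{C_p,C_p\times C_p\}$ again forces $a_\Gamma=1$. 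Finally I would convert $a_\Gamma=1$ into $a=1$: writing $e_\Gamma\delta_\Gamma=ep^{\alpha_\Gamma}\delta_\Gamma'$ with $p\nmid\delta_\Gamma'$ and applying lifting-the-exponent to $q^{e_\Gamma\delta_\Gamma}-\epsilon_\Gamma^\ast=(q^e)^{p^{\alpha_\Gamma}\delta_\Gamma'}-\epsilon^{\ast}$ gives $a_\Gamma=a+\alpha_\Gamma$, so $a_\Gamma=1$ forces $\alpha_\Gamma=0$ and $a=1$; Lemma \ref{Lem:a=1} then yields $p\nmid f$, whence $K$ has no field automorphism of order $p$. The main obstacle is the $e=1$ case of the middle step: there $p\mid q-\eta$ and $Z$ may have order divisible by $p$, so both the intersection with $\SL^\eta(V)$ and the central quotient genuinely alter the $p$-structure, and one must check carefully --- tracking determinants of the torus generators and the signs in the unitary case --- that these operations lower the rank without lowering the exponent below $p^{a_\Gamma}$.
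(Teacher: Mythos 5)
Your outline follows the same route as the paper: read off $D_G=\prod_\Gamma D_\Gamma$ from the Fong--Srinivasan parametrisation, split according to whether $Z(G)=C_{q-\eta}$ is a $p'$-group, write the cyclic constituents as $C_{p^{a+\alpha_\Gamma}}$ via lifting-the-exponent, and finish with Lemma \ref{Lem:a=1}. The easy case ($p\nmid q-\eta$, where $D\cong D_G$) is fine and matches the paper. The genuine gap is that in the hard case $e=1$, $\eta=\epsilon$ --- which is where essentially all of the paper's work lies --- you assert rather than prove the key claim, namely that after intersecting $D_G$ with $K_u$ (kernel of the determinant) and factoring out $Z$ the group $D$ is ``homocyclic of exponent $p^{a_\Gamma}$ on the surviving factors'', and you then concede in your last paragraph that verifying this is ``the main obstacle''. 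That concession is accurate, and it means the crux is missing. The danger is exactly that here $Z$ may contain $p$-elements (scalars), so the central quotient could a priori lower the exponent; moreover your structural claim is not right as stated: for two weight-one factors with $\alpha_\Gamma=\alpha_\Delta=1$ one computes $D_u\cong C_{p^{a+1}}\times C_p$, whose quotient by the scalar subgroup can be \emph{cyclic} of order $p^{a+1}$ --- not homocyclic of rank two on the surviving factors. The paper never computes the isomorphism type of $D$; instead it exhibits explicit cyclic subgroups of $D_u$ that meet the scalars trivially and hence embed into $D$: the subgroup $R_\Gamma=\langle{\mathrm{diag}}(x,x^{-1},I)\rangle$ inside a single factor when $w_\Gamma\geq 2$, and the determinant-compensating subgroups $P=\{{\mathrm{diag}}(x,y(x)):\det y(x)=\det(x)^{-1}\}$ across two factors when all $w_\Gamma=1$, with a sub-case analysis on whether some $\alpha_\Gamma=0$, and a further argument (using $(d_\Gamma)_p=p$ to ensure $P\cap Z(S(V_+))=1$) to exclude $a\geq 2$. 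These constructions and the verification that they avoid $Z$ are the proof; your proposal stops exactly where they start.

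Two secondary problems. First, your case split by $e$ alone is misaligned: $e\geq 2$ versus $e=1$ is not the same as $p\nmid q-\eta$ versus $p\mid q-\eta$; the case $e=1$ with $\eta=-\epsilon$ (e.g.\ unitary groups with $p\mid q-1$) has $p\nmid q-\eta$ and belongs with your easy case, whereas your hard case silently assumes $p\mid q-\eta$. This is repairable but as written one configuration is covered by neither branch. Second, the parenthetical that $w_\Gamma<p$ ``as $p\geq 5$'' is both unjustified and unavailable: the proposition carries no hypothesis $p\geq 5$ and is invoked in the proof of Proposition \ref{reduced_classification:prop} for $p=3$ as well; when $w_\Gamma\geq p$ the group $D_\Gamma$ is a wreath product rather than homocyclic, and this possibility has to be dealt with (the paper's $R_\Gamma$ argument handles every $w_\Gamma\geq 2$ uniformly, so no bound on $w_\Gamma$ is ever needed). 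Similarly, your reduction of the dichotomy to ``$\overline{D}=D_GZ/Z$ is non-cyclic'' needs the observation that the whole analysis is uniform over the choices of dominating and covering blocks, since a priori a different block of $G/Z$ covering $B$ could be the non-cyclic one; this is a gloss comparable to the paper's own, but it should be said.
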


\begin{proof}
Let $(s, \kappa)$ be the label of $B_G$, and $V, D_G, s$ have the
corresponding decomposition  (\ref{cgps}.1). Let $s_+=\prod_{\Gamma}s_{\Gamma}$
be a primary decomposition, so that $V_+=\bigoplus_{\Gamma} V_\Gamma$ with
$V_\Gamma$ the underlying space of $s_{\Gamma}$. Thus
$$
C_{G_+}(s_+)=\prod_{\Gamma} C_\Gamma, \quad C_\Gamma=
\GL^{\epsilon_\Gamma}(m_\Gamma, q^{\delta_\Gamma})\eqno(\ref{cgps}.2)
$$
with $m_\Gamma=m_\Gamma(s_+)$. We may suppose $D_+\in\Syl_p(C_{G_+}(s_+))$, that is, a Sylow subgroup of $C_{G_+}(s_+)$, so that
$$
D_+=\prod_{\Gamma} D_\Gamma,\quad D_\Gamma\in\Syl_p(C_\Gamma).
\eqno(\ref{cgps}.3)
$$
So $D_G$ is a direct product of cyclic groups and wreath product $p$-groups.
Since $C_{V_\Gamma}(D_\Gamma)=\{0\}$, it follows that $m_\Gamma=e_\Gamma w_\Gamma$
for some integer $w_\Gamma$, and 
\[
D_\Gamma \in \Syl_p(\GL^{\epsilon_\Gamma}(w_\Gamma, q^{e_\Gamma \delta_\Gamma})).
\]

Suppose $e\neq 1$ or $\eta\neq \epsilon$, so that $Z(G)=C_{q-\eta}$ is a $p'$-group. Thus $D\cong D_u=D_G$.
In particular, $D_G$ is abelian and each $w_\Gamma<p$.
Thus $D_\Gamma \cong (C_{p^{a+\alpha_\Gamma}})^{w_\Gamma}$.
Note that $w_\Gamma\geq 1$ for some $\Gamma$.  Since $D_\Gamma\leq C_p\times C_p$, we have $a=1$ and $\alpha_\Gamma=0$. 

Suppose $e=1$ and $\eta=\epsilon$, so that $(q-\epsilon)_p=p^a$ and each $e_\Gamma=1$. If $w_\Gamma\geq 2$, then define $R_\Gamma=\langle {\mathrm {diag}}(x, x^{-1}, I_{w_\Gamma-2}) \mid x\in C_{p^{a+\alpha_\Gamma}}\rangle$, a subgroup of $D_\Gamma$, with $R_\Gamma\leq S(V_\Gamma)$ and 
$R_\Gamma \cap Z(S(V_\Gamma))=1$. So $R_\Gamma\leq D_u$ and $R_\Gamma\cong R_\Gamma Z/Z\leq D$. 
Since $R_\Gamma\cong C_{p^{a+\alpha_\Gamma}}$, we have that $a=1$ and $\alpha_\Gamma=0$.

Hence it suffices to consider $\Gamma$ with $w_\Gamma=1$,  $D_\Gamma\cong C_{p^{a+\alpha_\Gamma}}$ and 
$\dim V_\Gamma =d_\Gamma$. In particular, $D_G$ is abelian.

If $D_G$ is cyclic, then so is $D_GZ/Z$. Thus a defect group of any block of $G/Z$ covering $B$ is cyclic. Suppose $D_G$ is non-cyclic.

Note that $C_\Gamma\cong \GL^{\epsilon_\Gamma}(1, q^{\delta_\Gamma})$ is a torus and
\[
C_\Gamma\cap S(V_\Gamma)\cong C_{q^{\delta_\Gamma-1}+\epsilon_\Gamma q^{\delta_\Gamma-2}+\ldots +1}.
\]

If $\alpha_\Gamma=0$, then $p\nmid \delta_\Gamma$,  $D_\Gamma=Z(G(V_\Gamma))_p=C_{p^a}$ and so $D_\Gamma\cap S(V_\Gamma)=1$.

Since $D_G$ is noncyclic, we have $m_\Delta=w_\Delta =1$ for some $\Delta\neq \Gamma$.
Now $\det : C_\Gamma\times  C_\Delta \to \bF_q^\times$ is surjective, so 
$\det : D_\Gamma\times D_\Delta \to  C_{p^a}=O_p(\bF_q^\times)$ is onto. In particular,
$(D_\Gamma\times D_\Delta)\cap S(V_\Gamma\perp V_\Delta)$ contains a cyclic group
$P=\{{\mathrm {diag}}(x, y) \mid x\in D_\Gamma, y=y(x)\in  D_\Delta\}$, where 
$\det(y(x))=\det (x)^{-1}$. Since $\det(x)\neq 1$, we have that $PZ/Z=P\leq D$.
In particular, $a=1$.  

Suppose each $\alpha_\Gamma\geq 1$. Then $D_\Gamma/Z(G(V_\Gamma))_p\cong C_{p^{\alpha_\Gamma}}$. A proof similar to above shows that $\alpha_\Gamma\leq 1$. Thus $\alpha_\Gamma=1$ and $(\delta_\Gamma)_p=p$. So $D_\Gamma\cap  S(V_\Gamma)\cong C_p$.
Since $|(D_\Gamma\times D_\Delta)\cap S(V_\Gamma\perp V_\Delta)|\geq p^{a+\alpha_\Gamma+\alpha_\Delta}$ and $D=D_u  Z/Z=C_p\times C_p$, we have $m_{\Gamma'}(s_+)=0$ for
$\Gamma'\not\in \{\Gamma, \Delta\}$, so 
\[
D_+=D_\Gamma\times D_\Delta\cong C_{p^{a+\alpha_\Gamma}}\times C_{p^{a+\alpha_\Delta}}=
C_{p^{a+1}}\times C_{p^{a+1}}.
\]

Let $Z_\Gamma=Z(G(V_\Gamma))_p\cong C_{p^a}$. Suppose $a\geq 2$. A similar proof to above shows that $D_+\cap S(V_+)$ contains a subgroup $P=\{{\mathrm {diag}}(x, y) \mid x\in Z_\Gamma, y=y(x)\in  D_\Delta\}$, where $\det(y(x))=\det (x)^{-1}$. Since $(d_\Gamma)_p=p$, we have $\det(x)\neq 1$ for generators $x\in Z_\Gamma$, so $P\cap Z(S(V_+))=1$. Hence
$PZ/Z\cong P \leq D$, which is impossible. So $a=1$.
\end{proof}


\bigskip
Let $V$ be a non-degenerate orthogonal or symplectic space,
$G=I_0(V)$ and let $G^*$ be the dual group of $G$. Note that
\[
\Sp_{2n}(q)^*=\SO_{2n+1}(q),\quad  \SO_{2n+1}(q)^*=\Sp_{2n}(q),\quad
\SO_{2n}^\eta(q)^*=\SO_{2n}^\eta(q).
\]
If $B$ is a $p$-block of $I_0(V)$, then there exists a semisimple $p'$-element
$s\in I_0(V)^*$ such that
\[
B\subseteq\cE_p(I_0(V),(s)).
\]
Let $(D, b_D)$ be a Sylow $B$-subgroup of $I_0(V)$. Then $V$ and
$D$ have corresponding decompositions
$$
V = V_0\perp V_+,\quad D=D_0\times D_+.\eqno(\ref{cgps}.8)
$$
If $p$ is odd, then $V_0=C_V(D)$, $V_+=[ V,D ]$,
$D_0=\{1_{V_0}\}$ and $D_+\leq I_0(V_+)$.  Let $G_0=I_0(V_0)$, $G_+=I_0(V_+)$,
$C_+=C_{I_0(V_+)}(D_+)$ and let $V^*$ be the underlying space of
$I_0(V)^*$.

Let $z\in D$ be a primitive element.  Then
$$
z=z_0\times z_+,\quad L=C_G(z)=L_0\times L_+,\quad L_0=G_0, \quad
L_+=\GL^\epsilon(m,q^e),
\eqno(\ref{cgps}.9)
$$
where $z_0=1_{V_0}$, $z_+\leq D_+$ and $\dim V_+=2em$.
Then $L$ is a Levi subgroup of $G$ and we may suppose
$s\in L^*\leq G^*$. In particular,
$$
V^*=U_0\perp U_+ \quad \hbox{   and    }\quad s=s_0\times s_+,
\eqno(\ref{cgps}.10)
$$
where $U_0=V_0^*$, $s_0\in L_0^*=I_0(U_0)$, $U_+$ is the
underlying space of $L_+^*$ and  $s_+\in L_+^*\leq I_0(U_+)$.


\begin{proposition}
\label{Prop:spomega}
Let $p$ be an odd prime

(a) Suppose $K_c\in\{\Sp_{2n}(q), \Omega^\eta_{2n}(q), \Omega_{2n+1}(q)\}$ and $K=K_c/Z_c$ for some $Z_c\leq Z(K_c)$. If $B\in\Blk(K)$ with $D_K=D(B)\cong C_p$ or $C_p\times C_p$, then $a=1$. In particular, $K$ has no field automorphism of order $p$.

(b) let $V$ be orthogonal, $K_u={\rm Spin}^\eta(V)$ and $K=K_u/Z_u$ for some $Z_u\leq Z(K_u)$, where $\eta=\circ$ when $\dim V$ is odd. Suppose $B_K\in \Blk(K)$ with $D_K=D(B_K)\cong C_p$ or $C_p\times C_p$. Then $a=1$ and $K$ has no field automorphism of order $p$. 

(c) Let $V$ be orthogonal such that $K_u=\Spin_8^+(q)=\Spin(V)$, $Z_u=1$ or $Z(K_u)$ and 
$K=K_u/Z_u$. Let $\Phi_K$ and $\Gamma_K$ be the field and graph automorphism groups of $K$, respectively, so that $\Phi_K\cong C_f$ and $\Gamma_K\cong S_3$. Let
\[
K_u\leq A\leq K_u.\Out(K),
\]
and $A_u=A/Z_u$, where $\Out(K)= \Phi_K\times {\rm Outdiag}(K_u).\Gamma_K$ with
${\rm Outdiag}(K).\Gamma_K\cong S_3$ or $S_4$ according to $r=2$ or odd.  

Suppose $p=3$ and $D\leq A_u$ is a defect group such that $D\cong 3_+^{1+2}$ and $D_K=D\cap K\cong C_3$ or $C_3\times C_3$.

(i) Then $C_K(D)=\SL_3^\epsilon(q)$ and 
\[
N_{A_u}(D)=(3_+^{1+2}\circ_3 \SL_3^\epsilon(q)).(C_d\times S_3\times \Phi)
\]
where $C_d=1$ or $C_2$ and $\Phi$ is a $3'$-subgroup of $\Phi_K$. Moreover, the outer side group $S_3$ in $N_{A_u}(D)$ is generated by $x$ and $y$ with $|x|=3$ and $|y|=2$ such that $x$ and 
$y$ induce outer-diagonal and graph automorphisms on $\SL_3^\epsilon(q)$, respectively.

(ii) The subgroup $D\cong 3_+^{1+2}$ given above is not a defect group of $A_u$.  

\end{proposition}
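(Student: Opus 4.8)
The plan is to combine Brauer's First Main Theorem with the structural description in (i) and a rigidity statement for the $3$-defect-zero characters of $\PSL_3^\epsilon(q)$. First I would note that, by Brauer's First Main Theorem, $D$ is a defect group of a block of $A_u$ if and only if it is a defect group of a block of $N:=N_{A_u}(D)$, so I may argue inside $N$. Suppose for contradiction that $B\in\Blk(N)$ has defect group $D$, and let $b_D$ be a root of $B$, i.e.\ a block of $C_{A_u}(D)$ with defect group $Z(D)$. By (i), $C_{A_u}(D)$ contains $\SL_3^\epsilon(q)$ with index prime to $3$, and $Z(D)=Z(\SL_3^\epsilon(q))=O_3(\SL_3^\epsilon(q))\cong C_3$; hence $b_D$ has central defect group and, after restriction to $\SL_3^\epsilon(q)$, dominates a block of defect zero of $\PSL_3^\epsilon(q)$, that is, a single irreducible character $\overline\chi\in\Irr(\PSL_3^\epsilon(q))$ of $3$-defect zero.

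Next I would bring in the order-$3$ element $x$ of the outer $S_3$. Since $x$ induces an outer (diagonal) automorphism of $\SL_3^\epsilon(q)$, it cannot lie in $D\,C_{A_u}(D)=D\circ_3\SL_3^\epsilon(q)$ (its action there would be inner), so its image in $N/D\,C_{A_u}(D)$ has order $3$. On the other hand, because $D$ is assumed to be a defect group, the inertial quotient $E=N_{A_u}(D,b_D)/D\,C_{A_u}(D)$ is a $3'$-group. A contradiction will therefore follow as soon as I show that $x$ stabilises $b_D$: its image would then be a non-trivial $3$-element of the $3'$-group $E$. As $x$ generates the diagonal automorphism group $\Outdiag(\SL_3^\epsilon(q))\cong C_3$, and $x$ stabilises $b_D$ precisely when it fixes $\overline\chi$, everything reduces to the claim that every $3$-defect-zero character of $\PSL_3^\epsilon(q)$ is $\Outdiag$-stable.

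To prove this claim I would argue through the diagonal extension and Lusztig's Jordan decomposition. If $\overline\chi$ lay in an $\Outdiag$-orbit of size $3$, then by Clifford theory $\mathrm{Ind}_{\PSL_3^\epsilon(q)}^{\PGL_3^\epsilon(q)}\overline\chi$ would be irreducible of degree $3\,\overline\chi(1)$; since $|\PGL_3^\epsilon(q)|_3=3\,|\PSL_3^\epsilon(q)|_3$ (using $(q-\epsilon)_3=3$), it would be a $3$-defect-zero character of $\PGL_3^\epsilon(q)$. So it suffices to show $\PGL_3^\epsilon(q)$ has no $3$-block of defect zero. By Jordan decomposition such a character would correspond to a semisimple element $s$ of the dual group $\SL_3^\epsilon(q)$ together with a $3$-defect-zero unipotent character of $C_{\SL_3^\epsilon(q)}(s)$. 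But $\SL_3^\epsilon(q)$ is simply connected, so this centraliser is connected and contains $Z(\SL_3^\epsilon(q))\cong C_3$; thus $O_3(C_{\SL_3^\epsilon(q)}(s))\neq 1$, and a finite group with non-trivial $O_3$ has no $3$-block of defect zero. This contradiction proves the claim, whence $x$ fixes $b_D$ and $D$ is not a defect group of $A_u$.

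The step I expect to be the main obstacle is the reduction in the middle paragraph: pinning down that the root $b_D$ really has defect group $Z(D)$ and that $x$ contributes an \emph{honest} order-$3$ element to the inertial quotient, so that $\Outdiag$-stability of $\overline\chi$ yields a genuine contradiction with $E$ being a $3'$-group. Once this is set up, the vanishing of defect-zero blocks for $\PGL_3^\epsilon(q)$ closes the argument cleanly; the numerical input $|\PGL_3^\epsilon(q)|_3=3\,|\PSL_3^\epsilon(q)|_3=27$ relies on $(q-\epsilon)_3=3$, i.e.\ $a=1$, and the case $\PSL_3(4)$ (whose $3$-defect-zero characters have degrees $45$ and $63$, each occurring with multiplicity two and never three) is a useful consistency check that the $\Outdiag$-orbits indeed have size $1$.
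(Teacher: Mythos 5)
Your proposal only addresses part (c)(ii) of the proposition, so as a proof of the stated result it has a genuine gap: parts (a), (b) and (c)(i) are never proved. This is not a cosmetic omission. You explicitly take the structural description in (c)(i) as an input (you need $C_{A_u}(D)=\SL_3^\epsilon(q)$, $Z(D)=Z(\SL_3^\epsilon(q))$, and the existence of the order-$3$ diagonal element $x$ in $N_{A_u}(D)$), and your numerical inputs $(q-\epsilon)_3=3$, $Z(\SL_3^\epsilon(q))\cong C_3$ and $|\PGL_3^\epsilon(q)|_3=3\,|\PSL_3^\epsilon(q)|_3$ all rest on $a=1$, which is precisely the content of parts (a)/(b). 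In the paper, (a) is proved by passing to $I_0(V)$ and exhibiting $Z(L_+)_p\cong C_{p^a}$ inside the defect group, (b) reduces to (a) by domination, and (c)(i) is extracted from \cite[Table 4.7.3A]{gls98}; none of this is replicated or replaced in your argument.

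For (c)(ii) itself, however, your argument is correct and structurally parallel to the paper's: both assume $(D,b_D)$ is a maximal $B$-subpair, show that the order-$3$ element $x$ stabilises $b_D$ and lies outside $DC_{A_u}(D)$ (since $D\circ_3\SL_3^\epsilon(q)$ induces only inner automorphisms of $\SL_3^\epsilon(q)$, while $x$ acts as an outer diagonal one), and conclude that $N_{A_u}(D,b_D)/DC_{A_u}(D)$ is not a $3'$-group, a contradiction. Your identification of the root block $b_D$ as having central defect group $Z(D)$ and dominating a single $3$-defect-zero character $\overline\chi$ of $\PSL_3^\epsilon(q)$ is also sound. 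The genuine difference is the key stability step: the paper cites \cite[Lemma 6.12]{AnHissLubeck21} to say the canonical character extends to ${\rm PGL}_3^\epsilon(q)$, whereas you prove the weaker (but sufficient) statement that $\overline\chi$ is $\Outdiag$-stable, via Clifford theory (a size-$3$ orbit would induce to a $3$-defect-zero character of $\PGL_3^\epsilon(q)$) combined with Jordan decomposition (no such character exists, because semisimple centralisers in the simply connected dual $\SL_3^\epsilon(q)$ contain $Z(\SL_3^\epsilon(q))\cong C_3$, hence have nontrivial $O_3$). That is a clean, self-contained replacement for the external lemma and would be a worthwhile alternative — but it only becomes a proof of the proposition once (a), (b) and (c)(i) are supplied.
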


\begin{proof}
(a) Let $V$ the underlying space of $K_c$ and $B_c\in\Blk(K_c)$ dominating $B$. Then $D_c\cong D_cZ_c/Z_c\cong D_K\cong C_p$ or $C_p\times C_p$ for some
$D_c=D(B_c)$. Let $G=I_0(V)$ and $B_G\in\Blk(G)$ covering $B_c$. Then
$D_c=D\cap K_c$ for some $D=D(B_G)$. Since $I_0(V)/K_c$ is a $p'$-group, we have
$D_c=D$. 

Follow the notation above. We have $D=D_0\times D_+\cong D_+$. Identify $L_+$ with $L_+^*$. Then $D_+\in \Syl_p(C_{L_+}(s_+))$. In particular, $C_{p^a}\cong Z(L_+)_p\leq D_+$ and hence $a=1$.

\medskip
(b) Let $Z\leq Z(K_u)$ such that $K_c=\Omega^\eta(V)=K_u/Z$ and let $B_u\in \Blk(K_u)$
dominate $B$ and $B_c\in \Blk(K_c)$ dominated by $B_u$, Then 
$D_K=D_u Z_u/Z_u$ for some $D_u=D(B_u)$ and $D_uZ/Z=D(B_c)$.
Since $Z$ and $Z_u$ are $p'$-group, we have $D(B_c)\cong D_u\cong D_K\cong C_p$ or $C_p\times C_p$.
By Proposition \ref{Prop:spomega} (a), $a=1$. 

\medskip
(c) (i) Let $B$ be a block of $A_u$ with defect group $D$ and let $B_K$ be a block of $K$ covered by $B$. We may suppose $D(B_K)=D_K$, wich is isomorphic to $C_3$ or $C_3\times C_3$. By 
parts (a) and (b) above, $K$ has no field automorphism of order $p$. So we may suppose
$A\leq K_u.S_3$ or $A\leq K_u.S_4$ according to $r=2$ or odd. 

Since a Sylow $3$-subgroup of $\Out(K)$ has order $3$, we have that $D_K\cong C_3\times C_3$ and 
$Z(D)\leq D_K$. Thus $D$ contains an order $3$ graph automorphism $\gamma_K$ of $K$.
Note that $\gamma_K$ acts on $Z(K_u)\setminus \{1\}$ transitively.
So we may assume that $Z_u=Z(K_u)$ and $K={\rm P}\Omega_8^+(q)$. By \cite[Table 4.7.3A]{gls98}, 
\[
C_K(Z(D))=(C_{q-\epsilon}\times C_{q-\epsilon})\circ_3 (\SL_3^\epsilon(q).x),
\]
where $x$ induces the order $3$ outer diagonal automorphism on $\SL_3^\epsilon(q)$. 
In addition, $N_K(Z(D))=C_K(Z(D)).y$ and 
\[
\Out_{{\rm Outdiag}(K).S_3}(Z(D))\cong \langle \gamma_K,\rho_K\rangle\cong S_3,
\]
where $y$ induces a graph automorphism on $\SL_3^\epsilon(q)$ and inveres elements of
$(C_{q-\epsilon}\times C_{q-\epsilon})$, $\gamma_K$ induces $(\omega{:}1)$ on
$C_K(Z(D))$ (there is a typo in \cite[Table 4.7.3A]{gls98}, see \cite[Table 4.7.2]{gls98}) and 
$\langle \gamma_K,\rho_K\rangle=\Gamma_K$. So $D_K=O_3(C_{q-\epsilon}\times C_{q-\epsilon})$ and 
$D=\langle D_K,\gamma_K\rangle\cong 3_+^{1+2}$. Since $N_{A_u}(D))\leq N_{A_u}(Z(D))$, it follows that $C_{A_u}(D)=\SL_3^\epsilon(q)$ and 
\[
N_{A_u}(D)=(3_+^{1+2}\circ_3 \SL_3^\epsilon(q)).(\langle x, y\rangle\times 
\langle\rho\rangle),
\]
where $\rho=1$ or $\rho_K$ according to 
$\rho_K\not\in A_u$ or  $\rho_K\in A_u$. Thus (c) (i) holds.

(ii) Follow the notation of (i). Let $(D, b_D)$ be a Sylow $B$-subgroup, so that 
$b_D$ is a block of $C_{A_u}(D)=\SL_3^\epsilon(q)$. If $\theta$ is the canonocal character of $b_D$, then $Z(C_{A_u}(D))$ is in the kernel of $\theta$ and $\theta$ has $3$-defect $0$ as character of $C_{A_u}(D)/Z(C_{A_u}(D))$. By \cite[Lemma 6.12]{AnHissLubeck21}, $\theta$ extends to ${\rm PGL}_3^\epsilon(q)$. Since $x\in N_{A_u}(D)$ induces an order $3$ diagonal outer 
automorphism on $\SL_3^\epsilon(q)$, it follows that $x\in N_{A_u}(D, b_D)$. In particular,
$N_{A_u}(D, b_D)/(C_{A_u}(D)D)$ is not a $3'$-group and hence $(D, b_D)$ is not a Sylow $B$-subgroup, which is impossible. This proves (c) (ii).
\end{proof}


\section{Exceptional groups}
\label{egps}

We repeat the analysis in the previous section for the exceptional groups. In this case we cannot rule out the existence of a field automorphism of order $p$, but the cases in which one may occur are limited, and must centralise the defect groups.

We will follow the notation of \cite{gls98}. Let $K$ be a quasisimple group of exceptional type and $B\in \Blk(K)$ with defect group $D\cong C_p\times C_p$. Let $({\overline {K}}, \sigma)$ be a $\sigma$-setup for $K$ in the sense of \cite[Definition 2.2.1]{gls98}, and let $\oT\leq \oK $ be a maximal torus. A subgroup $A\leq K$ is {\it toral} if
$A$ is a subgroup of a maximal torus of $\oK$. Two elements $w, y\in \oK$ are {\it $\sigma$-conjugate} if $w=xy\sigma(x)^{-1}$ for some $x\in \oK$.

We need the following proposition

\begin{proposition}{\rm (\cite[Propositions  4.1, 4.3 and 4.4 and Lemma 5.2]{ADL})}\label{prop:eleoK} 

Let $A \leq K$ be a subgroup. Then the following hold.

\begin{enumerate}

\item[$(a)$]
There is a bijection between $(\oK)^\sigma$-classes of subgroups of $(\oK)^{\sigma}$ which are 
$\oK$-conjugate to $A$, and $\sigma$-classes in $N_{\oK}(A)/C_{\oK}(A)^{\circ}$ contained in $C_{\oK}(A)/C_{\oK}(A)^{\circ}${:} the $\sigma$-class of $w \in C_{\oK}(A)/C_\oK(A)^\circ$ corresponds to the $(\oK)^\sigma$-class of subgroups with representative $A_{w} = {^g}A$, where $g \in \oK$ is chosen with $g^{-1}\sigma(g)C_{\oK}(A)^{\circ} = w$.

\item [$(b)$] Let $A_w\leq (\oK)^\sigma$ be the $\oK$-conjugate of
$A$ as in $(a)$. If $\dot{w}\in C_\oK(A)$ is any lift of $w$, then
\[(C_{\oK}(A_{w})^\circ)^{\sigma} \cong (C_{\oK}(A)^{\circ})^{\dot{w}\sigma} \]
where $(C_{\oK}(A)^{\circ})^{\dot{w}\sigma}$ is the set of fixed points of $\dot{w}\sigma$ in 
$C_{\oK}(A)^{\circ}$. 

\item [$(c)$] If $A_w$ is as in $(a)$ and  $w\sigma$ is identified with the map $x \mapsto w\sigma(x)w^{-1}$, then
\[
(N_{\oK}(A_{w}))^\sigma/(C_{\oK}(A_{w})^{\circ})^\sigma \cong (N_{\oK}(A)/C_{\oK}(A)^{\circ})^{w\sigma}
\]
and $C_{\oG}(A_{w})^\sigma/(C_{\oK}(A_{w})^{\circ})^\sigma \cong (C_{\oK}(A)/C_{\oK}(A)^{\circ})^{w\sigma}$

\item [$(d)$]
If, moreover that $A\leq \oT\cap K$ is  toral, then 
$N_\oK(A)/C_\oK(A)^\circ = N_{\oK^\sigma}(A)/(C_\oK(A)^\circ)^\sigma$, $C_\oK(A)/C_\oK(A)^\circ =
C_{\oK^\sigma}(A)/(C_\oK(A)^\circ)^\sigma$ and $\sigma$ acts trivially on $N_\oK(A)/C_\oK(A)^\circ$.

\end{enumerate}
\end{proposition}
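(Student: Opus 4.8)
Since this proposition is quoted from \cite{ADL}, the natural proof is by the Lang--Steinberg machinery for parametrising rational forms; I would deduce all four parts from the single fact that the map $L\colon \oK \to \oK$, $L(g)=g^{-1}\sigma(g)$, is surjective, which holds because $\oK$ is connected. For (a) the starting point is that, as $A$ is fixed pointwise by $\sigma$, a conjugate ${}^{g}A$ lies in $(\oK)^\sigma$ if and only if $g^{-1}\sigma(g)\in C_\oK(A)$: for $a\in A$ one computes $\sigma({}^{g}a)={}^{\sigma(g)}\sigma(a)={}^{\sigma(g)}a$, so every element of ${}^{g}A$ is $\sigma$-fixed precisely when $g^{-1}\sigma(g)$ centralises $A$. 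I would then send ${}^{g}A$ to the class of $g^{-1}\sigma(g)$ in $C_\oK(A)/C_\oK(A)^\circ$. Recovering ${}^{g}A$ only up to $(\oK)^\sigma$-conjugacy leaves $g$ ambiguous up to $g\mapsto hgn$ with $h\in(\oK)^\sigma$ and $n\in N_\oK(A)$, and this replaces $g^{-1}\sigma(g)$ by $n^{-1}\bigl(g^{-1}\sigma(g)\bigr)\sigma(n)$; since $C_\oK(A)^\circ=N_\oK(A)^\circ$ (the centraliser has finite index in the normaliser of the finite group $A$), this is exactly $\sigma$-conjugacy in the finite group $N_\oK(A)/C_\oK(A)^\circ$, and the image always lands in the normal subset $C_\oK(A)/C_\oK(A)^\circ$. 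Well-definedness and injectivity are this computation read forwards and backwards, while surjectivity onto the $\sigma$-classes meeting $C_\oK(A)/C_\oK(A)^\circ$ is immediate from Lang's theorem: any lift $\dot w\in C_\oK(A)$ equals $g^{-1}\sigma(g)$ for some $g$, and then ${}^{g}A=A_w$ does the job.

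For (b) and (c) I would transport structure along conjugation by the chosen $g$, so that $A_w={}^{g}A$ and $\sigma(g)=g\dot w$. Conjugation by $g$ is an isomorphism $C_\oK(A)\to C_\oK(A_w)$ (and $N_\oK(A)\to N_\oK(A_w)$) intertwining the genuine $\sigma$-action on the $A_w$-side with the twisted action $\dot w\sigma\colon c\mapsto \dot w\,\sigma(c)\,\dot w^{-1}$ on the $A$-side, because $\sigma({}^{g}c)={}^{g\dot w}\sigma(c)={}^{g}\bigl(\dot w\sigma(c)\dot w^{-1}\bigr)$. Passing to $\sigma$-fixed points of the identity component gives (b). For (c) I would combine this with the cohomology sequence attached to $1\to C_\oK(A)^\circ\to N_\oK(A)\to N_\oK(A)/C_\oK(A)^\circ\to 1$: the group $C_\oK(A)^\circ$ is connected, so $H^1(\sigma,C_\oK(A)^\circ)=1$ by Lang, whence $N_\oK(A_w)^\sigma/(C_\oK(A_w)^\circ)^\sigma\cong (N_\oK(A_w)/C_\oK(A_w)^\circ)^\sigma$; transporting back along $g$ turns $\sigma$ into $w\sigma$ and yields the stated isomorphism, with the centraliser version obtained identically.

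The toral refinement (d) is where the genuine work lies. When $A\leq \oT\cap K$ is toral I would show that each component of $N_\oK(A)$ already contains a rational representative lying in $N_{\oK^\sigma}(A)$, giving $N_\oK(A)/C_\oK(A)^\circ=N_{\oK^\sigma}(A)/(C_\oK(A)^\circ)^\sigma$ and the analogous equality for the centraliser, and that the induced $\sigma$-action on this finite quotient is trivial. The input here is geometric rather than formal: $A$ sits inside a $\sigma$-stable maximal torus, and the component group of $C_\oK(A)$ is controlled by the subsystem of roots vanishing on $A$ together with the relevant Weyl-group data, which $\sigma$ respects because $A$ is pointwise fixed. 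I expect this last point---verifying that $N_\oK(A)/C_\oK(A)^\circ$ carries a trivial $\sigma$-action and is realised by rational elements---to be the main obstacle, since it is exactly where one must leave the Lang--Steinberg bookkeeping and use the structure of the torus and root datum; by contrast parts (a)--(c) are essentially forced once the identity $g^{-1}\sigma(g)\in C_\oK(A)$ and the connectedness of $C_\oK(A)^\circ$ are in hand.
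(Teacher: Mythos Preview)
The paper does not give its own proof of this proposition; it is quoted verbatim from \cite{ADL}, so there is nothing to compare your argument against directly. Your Lang--Steinberg outline for parts (a)--(c) is correct and is exactly the standard argument one would expect to find in \cite{ADL}: the computation $\sigma({}^{g}a)={}^{\sigma(g)}a$, the ambiguity $g\mapsto hgn$, and the vanishing of $H^1(\sigma,C_{\oK}(A)^\circ)$ by Lang are all the right ingredients.

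For part (d) you correctly flag the nontrivial step but stop short of the mechanism. In fact the paper itself supplies the concrete argument you are reaching for, not as a proof of (d) but inside the proof of Lemma~\ref{Lem:stabfieldaut}: since $A$ lies in the $\sigma$-stable maximal torus $\oT$, for any $x\in N_{\oK}(A)$ both $\oT$ and $\oT^{x}$ are maximal tori of the connected group $C_{\oK}(A)^\circ$, hence are conjugate by some $y\in C_{\oK}(A)^\circ$; then $xy\in N_{\oK}(\oT)=\oT V$ where $V$ is the extended Weyl group, and one obtains $N_{\oK}(A)=C_{\oK}(A)^\circ\,N_V(A)$ and $C_{\oK}(A)=C_{\oK}(A)^\circ\,C_V(A)$. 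Because $V$ is centralised by every field endomorphism, the quotient $N_{\oK}(A)/C_{\oK}(A)^\circ$ is represented by $\sigma$-fixed elements, giving both the triviality of the $\sigma$-action and the equality $N_{\oK}(A)/C_{\oK}(A)^\circ = N_{\oK^\sigma}(A)/(C_{\oK}(A)^\circ)^\sigma$. This is precisely the ``structure of the torus and root datum'' you allude to, made explicit via conjugacy of maximal tori inside $C_{\oK}(A)^\circ$; once you see that, the anticipated obstacle disappears.
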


\noindent
Thus if $\sigma$ acts as identity on $H=N_{\oK}(A)/C_{\oK}(A)^{\circ}$, then $H^{w\sigma}=C_H(w)$ and the
$\sigma$-classes are the $H$-conjugacy classes. So toral elementary abelian $p$-subgroups of a finite exceptional group of Lie type can be determined by \cite{ADL} and \cite{ADLmagma} (cf. \cite[Section 5]{ADL}).
If $D=\langle  x, y\rangle\cong C_p\times C_p$ and $K=K_u$, then $C_\oK(x)$ is connected and so $y$ lies in a maximal 
torus of $C_\oK(x)$. In particular, $D$ is toral. 

\begin{proposition}
\label{Prop:excep}
Let $p$ be an odd prime and let $K$ be a finite exceptional quasisimple group and 
$B\in \Blk(K)$ with defect group $D\cong C_p\times C_p$ and Sylow $B$-subgroup $(D, b_D)$. Then one of the following holds.

$(a)$ $a=1$.

$(b)$ $p=5$, $K=E_8(q)$, $e=1$ and $B=\cE_5(K, (s))$, where 
$s\in \cf^{5'}$ such that $C_{K^*}(s)=C_{q^4+\epsilon q^3+q^2+\epsilon q+1}\times C_{q^4+\epsilon q^3+q^2+\epsilon q+1}$.

\medskip
$(c)$ $p=3$, $K\in \{{}^3 D_4(q), F_4(q),(E_6)_u^{-\epsilon}(q),(E_7)_u(q)/Z, E_8(q)\}$
with $Z\leq Z((E_7)_u(q))$ such that 
$$
C_K(D)^\circ=\left\{ \begin{array}{lll}
C_{q^2+\epsilon q+1}\times C_{q^2+\epsilon q+1} & {\rm {if}} & K\in \{ {}^3D_4(q), F_4(q)\},\\
C_{q^2+\epsilon q+1}\times C_{q^4+q^2+1} & 
{\rm {if}} & K=(E_6)_u^{-\epsilon}(q), \\
C_{q^2+\epsilon q+1}\times C_{q^2+\epsilon q+1}\times (\SL_2(q^3)/Z) & {\rm {if}} & K=(E_7)_{u}(q)/Z, \\
C_{q^2+\epsilon q+1}\times C_{q^2+\epsilon q+1}\times {}^3D_4(q) & {\rm {if}} & K=E_8(q), \end{array} \right.
$$
and $b_D$ covers a block $b\in \Blk(C_K(D)^\circ)$ with $D(b)=D$, 
where $C_K(D)^\circ=(C_{\overline {K}}(D)^\circ)^\sigma=C_{C_{\overline {K}}(D)^\circ}(\sigma)$ and $C_K(D)=C_K(D)^\circ.p$ except when $K={}^3D_4(q)$, in which case $C_K(D)=C_K(D)^\circ$.

\end{proposition}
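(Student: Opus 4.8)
The proposition concerns exceptional quasisimple groups $K$ with a block $B$ whose defect group $D \cong C_p \times C_p$, and the goal is a trichotomy: either the defect number $a=1$ (case (a)), or we land in one of two exceptional configurations (cases (b), (c)) at $p=5$ or $p=3$. My plan is to reduce everything to a computation in the algebraic group via the structure theory set up in Proposition~\ref{prop:eleoK}, and then argue that $a \geq 2$ forces very rigid centraliser structure, which a finite CFSG-style case analysis can enumerate.

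\medskip

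\textbf{Step 1: Reduce to the toral case and locate $D$ in a torus.} By the remark following Proposition~\ref{prop:eleoK}, since $D = \langle x,y\rangle \cong C_p \times C_p$ and $K=K_u$ is quasisimple, $C_{\oK}(x)$ is connected and $y$ lies in a maximal torus of it, so $D$ is toral. I would fix a $\sigma$-setup $(\oK,\sigma)$ and a maximal torus $\oT$ with $D \leq \oT \cap K$. Using Proposition~\ref{prop:eleoK}(a)--(d), the $(\oK)^\sigma$-classes of such $D$, together with the relevant normaliser and centraliser quotients $N_{\oK}(D)/C_{\oK}(D)^\circ$ and $C_{\oK}(D)/C_{\oK}(D)^\circ$, are computed as $w\sigma$-fixed points, and for toral $A$ these reduce to ordinary $H$-conjugacy via part (d). This is exactly the input that lets one invoke the explicit tables of~\cite{ADL,ADLmagma} for each exceptional type.

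\medskip

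\textbf{Step 2: Relate $a$ to the structure of $C_K(D)$ and the Sylow $B$-subgroup.} The defect group $D$ is a Sylow $p$-subgroup of $C_G(D)$ (up to the usual identifications), and $a$ is governed by the $p$-part of $q^{e}-\epsilon$ appearing in the torus factors of $C_{\oK}(D)^\circ{}^\sigma$. The key mechanism, parallel to the classical-group argument in Proposition~\ref{Prop:slu}, is that if $a \geq 2$ then the cyclic factors $C_{(q^e-\epsilon)_p}$ forced into the torus would, combined with multiplicities, make $D$ larger than $C_p \times C_p$ — unless the centraliser has a very special shape. So I would show: $D \cong C_p \times C_p$ with $a \geq 2$ can only occur when $C_K(D)^\circ$ is (essentially) a product of two cyclic tori whose orders are exactly divisible by $p$, pinning down $e$ and the cyclotomic factors. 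This is where $\Phi_e$-torus theory and the classification of semisimple $p'$-elements $s$ with the prescribed centraliser $C_{K^*}(s)$ enters, giving the explicit groups $C_{q^4+\epsilon q^3+q^2+\epsilon q+1}\times C_{q^4+\epsilon q^3+q^2+\epsilon q+1}$ at $p=5$ and the list at $p=3$.

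\medskip

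\textbf{Step 3: Enumerate the exceptional types.} With the structural constraints from Step 2, I would go type by type ($G_2$, $F_4$, ${}^3D_4$, $E_6^{\pm}$, $E_7$, $E_8$, and twisted forms) using the centraliser-of-torus data, checking for each small prime $p$ whether a $C_p \times C_p$ defect group with $a \geq 2$ can arise. The small-rank types and those whose relevant $\Phi_e$-part cannot support two independent cyclic factors of $p$-height $\geq 2$ get ruled into case (a); the survivors are precisely $E_8(q)$ at $p=5$ (case (b)) and $\{{}^3D_4, F_4, (E_6)^{-\epsilon}, (E_7)/Z, E_8\}$ at $p=3$ (case (c)), where the stated $C_K(D)^\circ$ and the covering-block condition $D(b)=D$ are read off. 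I would use~\cite[Table 4.7.3A]{gls98} and the dual-group identifications, as in the proof of Proposition~\ref{Prop:spomega}, to confirm the component groups $C_K(D) = C_K(D)^\circ.p$ (with the ${}^3D_4$ exception where $C_K(D)=C_K(D)^\circ$).

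\medskip

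\textbf{Main obstacle.} The hard part will be Step 3: it is a genuinely case-laden CFSG computation, and the delicate points are (i) correctly tracking the component group $C_{\oK}(D)/C_{\oK}(D)^\circ$ and its $\sigma$-twisting so that one gets the right extension $C_K(D)^\circ.p$ versus $C_K(D)^\circ$, and (ii) verifying that the block $b$ of $C_K(D)^\circ$ covered by $b_D$ genuinely has full defect $D(b)=D$ rather than a smaller defect, which requires a Clifford-theoretic check on how $b_D$ covers $b$ across the index-$p$ extension. Isolating exactly which $(K,p,e,s)$ survive — and proving nothing else does — is where the real work lies.
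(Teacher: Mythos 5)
Your plan has the right outer shape (pass to a toral situation, analyse centralisers, enumerate the exceptional types), and pieces of it do occur in the paper (torality of $D$, the machinery of Proposition~\ref{prop:eleoK}, the GLS tables). But it has a concrete error and a missing key idea. The error is the premise in your Step 2 that $D$ is a Sylow $p$-subgroup of $C_G(D)$: this is false in general. What is true (since $D$ is abelian, so $Z(D)=D$) is that $D$ is a defect group of the root block $b_D$ of $C_K(D)$, whence $O_p(C_K(D))\le D$; but the $p$-part of $|C_K(D)|$ can be strictly larger than $|D|$, and this happens in almost every case the proposition asserts. For $p=3$ and $K=(E_7)_u(q)/Z$ or $E_8(q)$, the group $C_K(D)^\circ$ has a component $\SL_2(q^3)/Z$, resp.\ ${}^3D_4(q)$, whose order is divisible by $3$ (because $3\mid q^2+\epsilon q+1$ and $q^2+\epsilon q+1$ divides $q^3-\epsilon$); even in case (b) one has $C_K(D)=(C_{q^4+\epsilon q^3+q^2+\epsilon q+1})^2.5$, whose $5$-part is $5^3>|D|$. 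In these cases $D(b)=D$ holds for Clifford-theoretic reasons (the covered block has defect zero on the nonabelian component, and the index-$p$ extension does not enlarge the defect group), not for Sylow reasons. An enumeration filtered by your Sylow criterion would therefore wrongly discard exactly the configurations of (b) and (c), keeping only ${}^3D_4(q)$.

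The second gap is that nothing in your proposal bounds the primes, so your Step 3 has no a priori finiteness; this is where the paper's central idea, absent from your plan, does the work. After passing to the universal group $K_u$ and a dominating block (a reduction you also omit, and which is needed because $D\not\cong D_u$ can occur for $(E_6)_u^\epsilon(q)$ at $p=3$ and requires a separate argument), the paper chooses $z\in Z(D_u)$ and takes a \emph{major subsection} $(z,B_z)$, so that $D_u$ is a defect group of the block $B_z$ of $C_{K_u}(z)$. By \cite[Theorem 4.1.9]{gls98}, $z$ is of parabolic or exceptional type. If $z$ is parabolic, then $Z(C_{K_u}(z))$ contains $C_{q-\epsilon}$; since a defect group of a block of $C_{K_u}(z)$ contains $O_p(Z(C_{K_u}(z)))$, we get $C_{p^a}\le D_u$, and exponent $p$ forces $a=1$. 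Exceptional elements of order $p$ exist only when $p$ is a bad prime for $K$, so all $p\ge 7$ (and all parabolic configurations at $p=3,5$) land in case (a) at once, and only $p\in\{3,5\}$ — with $K=E_8(q)$ forced when $p=5$ — survive for the look-ups in \cite[Tables 4.7.3A and 4.7.3B]{gls98} and the Fong--Srinivasan/Deligne--Lusztig arguments ($R^K_{T}$ perfect isometries) that produce the block-theoretic statements in (b) and (c). Replacing this dichotomy by a direct enumeration of all toral $C_p\times C_p$ subgroups for all primes via \cite{ADL} is conceivable, but it is a substantially heavier route, and the centraliser criterion you propose to drive it is, as written, incorrect.
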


\begin{proof}
Let $K_u$ be a universal group such that $K=K_u/Z$ for some $Z\leq Z(K_u)$, and 
let $B_u\in \Blk(K_u)$ dominate $B$ and $D_u=D(B_u)$ such that $D=D_uZ/Z$.
If $Z(K_u)\neq \Omega_1(Z(D_u))$, then take $z\in Z(D_u)\setminus Z(K_u)$
with $|z|=p$. If $Z(K_u)=\Omega_1(Z(D_u))$, then take $z\in D_u$ such that
$zZ(K_u)\in Z(D_u/Z(K_u))$. Let $(z,B_z)$ be a
$B_u$-subsection, and suppose it is major when $z\in Z(D_u)$. In any case, let
$D_z$ be a defect group for $B_z$ with $D_z \leq D_u$. Then $B_z$
is a block of $C=C_{K_u}(z)$, and $D_u$ is a defect group for $B_z$ when $(z,B_z)$
is major. By \cite[Theorem 4.2.2]{gls98},
\[
C=O^{r'}(C)T,\quad O^{r'}(C)=L_1\circ L_2 \circ\cdots \circ L_\ell
\]
where each $L_i\in\Lie(r)$, and $T$ is an abelian $r'$-group inducing
inner-diagonal automorphisms on each $L_i$. In general, $z\not\in O^{r'}(C)$,
so we perform the following modifications. If $Z(C)\leq O^{r'}(C)$, then let
$\ell=k$ and $L=O^{r'}(C)$. If $Z(C)\not\leq O^{r'}(C)$, then let $k=\ell+1$
and $L_k=Z(C)$. Thus
$$
C=LT\quad {\rm with}\quad \quad L:=L_1\circ L_2 \circ\cdots \circ L_k,
\eqno(\ref{egps}.1)
$$
$z\in Z(C)\leq L$ and  $L\lhd C$. Let $B_L$ be a block of $L$ covered
by $B_z$. Note that $B_L$ has defect group
$D_L = D_z \cap L$. If $\chi\in \Irr(B_L)$, then $\chi=\chi_1\circ\cdots\circ\chi_k$ for
some $\chi_i\in\Irr(L_i)$, so that $\chi_i\in\Irr(B_i)$ for some
$B_i\in\Blk(L_i)$ and we write $B_L=B_1\circ B_2\circ\cdots\circ B_k$.

Since $L=(L_1\times L_2\times\cdots\times L_k)/A$ for a central subgroup
$A\leq (L_1\times L_2\times\cdots\times L_k)$, it follows by
\cite[Theorems 5.8.8 and 5.8.10]{NT} that
\[
D_L=(D(B_1)\times D(B_2)\times \cdots\times D(B_k))A/A
\] where $D(B_i)$ is some defect group of $B_i$ and $D(B_i)$ is isomorphic to
a subgroup of $D_L$.

Since $z\in K_u$, it follows by \cite[Theorem 4.1.9]{gls98} that $z$ is of parabolic or
exceptional type. Suppose $z$ is of parabolic type. Then 
${\overline {C}}=C_{{\overline {K}}_u}(z)=S {\overline {L}}$, where 
${\overline {K}_u}$ is the corresponding algebraic group, ${\overline {L}}=[{\overline {C}}, {\overline {C}}]$ is semisimple and $S=Z({\overline {C}})$ is a torue. Since $z$ is parabolic, we have that $\dim S\geq 1$ and hence $C_{q-\epsilon}\leq C_S(\sigma)\leq Z(C)$ except when
$p=3$ and $K=K_u={}^3 D_4(q)$ and $C=(C_{q^2+\epsilon q+1}\circ_3 (\SL_3^\epsilon (q).3))$, 
in which case $Z(C)=C_{q^2+\epsilon q+1}$ (see \cite[Table 4.7.3A]{gls98}).

Suppose $(K_u,p)\neq ((E_6)_u^\epsilon(q), 3)$ and  $C_{q-\epsilon}\leq Z(C)$.
Since $Z(C)$ is abelian and a defect group of $L$ is radical, we have
$C_{p^a}\leq D_L\leq D_u$. But $C_{p^a}\cap Z=1$, so $C_{p^a}Z/Z=C_{p^a}\leq D$. Hence $a=1$.

\bigskip
Suppose $z$ is exceptional, so that $p$ is not good for $K$ in the sense of \cite[Table 1]{CE94}. Thus $p\in \{3, 5\}$ and moreover, $K=K_u=E_8(q)$ when $p=5$. In particular,
(a) holds when $p\geq 7$. 

Suppose $D\cong D_u$ and $(K_u,p)\neq ((E_6)_u^\epsilon(q), 3)$. If some $y\in D_u\setminus \{1\}$ is not exceptional, then replace $z$ by 
$y$ and repeat the proof above. We may suppose each $y\in D_u\setminus \{1\}$ is exceptional, except when $K=K_u={}^3 D_4(q)$, in which case each $x\in D_u\setminus \{1\}$ satisfies
$C_K(x)\cong (C_{q^2+\epsilon q+1}\circ_3 (\SL_3^\epsilon (q).3))$. Denote by $T_{2,\epsilon}=C_{(q^{ep}-\epsilon)/(q^e-\epsilon)}\times C_{(q^{ep}-\epsilon)/(q^e-\epsilon)}$.

\bigskip
(b) If $p=5$,  then $K=K_u=E_8(q)$ and by \cite[Table 4.7.3B]{gls98},
\[
(e, C)\in \{(1, (\SL_5^\epsilon (q)\circ_5 \SL_5^\epsilon (q)).x), (2, \SU_5(q^2).5)\},
\]where $x=5{:}5$. Since $D_u\leq C$, we have $e=1$ and $C=(\SL_5^\epsilon (q)\circ_5 \SL_5^\epsilon (q)).x$. In addition, $D=\langle z, y\rangle$ for some 
$y\in C\setminus  (\SL_5^\epsilon (q)\circ_5 \SL_5^\epsilon (q))$ as $y$ is exceptional. So 
\[
C_K(D)=C_K(D)^\circ.5=T_{2,\epsilon}.5=(C_{q^4+\epsilon q^3 +q^2+\epsilon q+1})^2.5.
\]
By \cite[Table 4.7.1]{gls98}, $\oC=\SL_5(\bF)\circ_5\SL_5(\bF)$ is a connected reductive group, and by \cite{FS82}, $B_z=\cE_5(C, (t))$ for some $t\in \oC^*$, where $\bF$ is the algebraic closure of $\bF_q$.
In addition, $D=D(B_z)$ is isomorphic to a Sylow subgroup of $C_{C^*}(t)$. Hence $C_{\oC^*}(t)=\oT^*$ is a maximal
torus such that $\oT^\sigma\cong T_{2,\epsilon}$, where $\oT$ is the dual of $\oT^*$.

Take $\oH_1=\GL_5(\bF)=\oT_1\circ_5 \SL_5(\bF)$ and $\oH_2=\SL_5(\bF)\circ_5\oT_1$ be subgroups of $\oC$. 
Then each $\oH_i$ is a Levi subgroup of both $\oC$ and $\oK$, and $C_{\oC^*}(t)\leq \oH_i^*$.
If $H_i=\oH_i^\sigma$, then we may suppose $t\in H_i^*\leq K^*$. Since each $C_{H_i^*}(t)=T_{2,\epsilon}^*$, it follows that
$C_{K^*}(t)=T_{2,\epsilon}^*$. If $C_{K^*}(t)=T_{2,\epsilon}^*$, then 
$b=\cE_p(T_{2,\epsilon}, (t))$ is a block with a defect group $O_p(T_{2,\epsilon})=C_p\times C_p$.
But $T_{2,\epsilon}$ is a Levi subgroup and $R_{T_{2,\epsilon}}^K$ is a perfect isometry,
so $B=\cE_p(K, (t))$ is a block and $D(B)\cong C_p\times C_p$. Here $R_{T_{2,\epsilon}}^K$ is the Deligne-Lusztig map from 
$\cE_p(T_{2,\epsilon}, (t))$ to $\cE_p(K, (t))$. Similarly, $R_{T_{2,\epsilon}}^C(b)=B_z$.
Thus (b) holds.

\bigskip
(c) Suppose $p=3$, so that $e=1$. The proof is similar to that of (b).
Let $K=K_u={}^3 D_4(q)$, so that $\oC=\oT_2\circ_3\SL_3(\bF)$ is Levi. Thus $C=C_{q^2+\epsilon q+1}\circ_3 (\SL_3^\epsilon (q).3)$ and $B_z=\cE_p(C, (s))$ for some $s\in (\oC^*)^\sigma$. So $D=D_u=C_p\times C_p$ and $D=\langle z, y\rangle$ for some 
$y\in (\SL_3^\epsilon (q).3)\setminus \SL_3^\epsilon (q)$. Thus
$C_K(D)=C_C(y)=T_{2,\epsilon}=(C_{q^2+\epsilon q+1})^2$ and $s\in  T_{2,\epsilon}^*$.
A proof similar to that of (b) shows that $C_{K^*}(s)=T_{2,\epsilon}^*$ (cf. \cite[Table 1]{An95}). Conversely, if $s\in K^*$ such that $C_{K^*}(s)=T_{2,\epsilon}^*$, then
$B=\cE_p(K, (s))$ is a block and $D(B)=O_p(T_{2,\epsilon})\cong C_p\times C_p$.
Thus (c) holds.

\medskip
Suppose $K=F_4(q)$, so that $K=K_u$, $\oC=\SL_3(\bF)\circ_3\SL_3(\bF)$, $C=(\SL_3^\epsilon(q)\circ_3 \SL_3^\epsilon(q)).x$ and $D=D_u=C_p\times C_p$, where $x=3{:}3$. A proof similar to that of (b) shows that 
$C_K(D)=T_{2,\epsilon}.3=(C_{q^2+\epsilon q+1})^2.3$, $B_z=\cE_p(C, (s))$ and 
$s\in T_{2,\epsilon}^*$. In addition, $C_{K^*}(s)=T_{2,\epsilon}^*$ and $B=\cE_p(K, (s))$ is a block
with defect group $D=O_p(T_{2,\epsilon})=C_p\times C_p$.

\bigskip
If $K=K_u=(E_{6})_{u}^{-\epsilon}(q)$ and $z$ is exceptional, then 
$\oC=(\SL_3(\bF))^3/3$ and
by \cite[Table 4.7.3A]{gls98}, $C=(\SL_3^\epsilon(q)\circ_3 \SL_3(q^2)).x$ with $x=3{:}3$. Thus 
$D=\langle z, y\rangle$ for some $y\in C\setminus \SL_3^\epsilon(q)\circ_3 \SL_3(q^2)$, and $C_K(D)=C_K(D)^\circ .3=(C_{q^2+\epsilon q+1}\times C_{q^4+q^2+1}).3$. A proof similar to that of (b) shows 
that $B_z=\cE_p(C, (s))$, $s\in (C_{q^2+\epsilon q+1}\times C_{q^4+q^2+1})^*$ and  $C_{K^*}(s)\cong 
C_{q^2+\epsilon q+1}\times C_{q^4+q^2+1}$. Conversely, if $C_{K^*}(s)\cong C_{q^2+\epsilon q+1}\times C_{q^4+q^2+1}$, then $B=\cE_p(K, (s))$ is a block with defect group $D=O_p(C_{q^2+\epsilon q+1}\times C_{q^4+q^2+1})=C_p\times C_p$.

\bigskip
If $K_u=(E_7)_u(q)$, then by \cite[Table 4.7.3A]{gls98}, $C=(\SL_3^\epsilon(q)\circ_{6^*}\SL_6^\epsilon(q).6^*).x$, where $6^*=\gcd(6, q-\epsilon)$ and $x=3{:}3$. 
Note that $D=D_uZ/Z\cong D_u$. So $D=\langle z, y\rangle$ for some $y\in C\setminus (\SL_3^\epsilon(q)\circ_{6^*}\SL_6^\epsilon(q).6^*)$. Hence $C_{K_u}(D)=
C_{K_u}(D)^\circ.p=(T_{2,\epsilon}\times \SL_2(q^3)).p$. If $(D, b_D)$ is a Sylow $B$-subgroup, then $b_D$ covers a block $b=b_1\times b_2\in \Blk(C_{K_u}(D)^\circ)$, where $b_1\in \Blk(T_{2,\epsilon})$ with $D(b_1)=D$ and 
$b_2\in\Blk(\SL_2(q^3))$ with $D(b_2)=1$.
Conversely, suppose $b=b_1\times b_2\in \Blk(C_{K_u}(D)^\circ)$ such that
$b_1\in \Blk(T_{2,\epsilon})$ with $D(b_1)=O_p(T_{2,\epsilon})=D$ and
$b_2\in \Blk(\SL_2(q^3))$ with $D(b_2)=1$. Note that $C_{K_u}(D)^\circ$ is a Levi subgroup of $K_u$, so $B=R_{C_{K_u}(D)^\circ}^{K_u}(b)$ is a block of $K_u$ with $D(B)=D$.
Thus (c) holds. If $Z=Z(K_u)\neq 1$, then $Z=C_2$, $C_K(D)=C_{K_u}(D_u)/Z=(T_{2,\epsilon}\times \SL_2(q^3)/Z).p$. A proof similar to that of $K_u=(E_7)_u(q)$ shows that (c) also holds. 

\bigskip
If $K=K_u=E_8(q)$, then  by \cite[Table 4.7.3A]{gls98}, 
\[
C\in  \{(\SL_9^\epsilon(q)/3).(9^*/3), (\SL_3^\epsilon(q)\circ_3 (E_6)_u(q)).x\}
\]
with $x=3{:}3$. By \cite[Table 4.7.3A]{gls98} again, $C_K(D)=C_K(D)^\circ.p =
(T_{2,\epsilon}\times {}^3D_4(q)).p$. A proof similar to that of $K_u=(E_7)_u(q)$ shows that the root block $b_D\in \Blk(C_K(D))$ of $B$ covers a block 
$b=b_1\times b_2\in \Blk(C_K(D)^\circ)$, where $b_1\in\Blk(T_{2,\epsilon})$ with $D(b_1)=D$ and $b_2\in\Blk({}^3D_4(q))$ with $D(b_2)=1$. 
Conversely, if $b\in  \Blk(C_K(D)^\circ)$ with $D(b)=D$, then $B=R_{C_K(D)^\circ}^K(b)$ has a defect group $D$. So (c) holds.

\bigskip
If $K=K_u=(E_{6})_{u}^\epsilon(q)$ and  $z$ is parabolic, then by \cite[Table 4.7.3A]{gls98}, $C_{q-\epsilon}\leq Z(C)$ and
$C_{p^a}\leq D$ as $D$ is a defect group of $C$. Hence $a=1$ and (a) holds. 
If $z$ is exceptional, then $C=((\SL_3^\epsilon(q) \times \SL_3^\epsilon(q)\times \SL_3^\epsilon(q))/3).x$ such that $x=3{:}3{:}3$. Since $D\leq C$ is a defect group, we have
$Z(C)=C_p\times C_p\leq D$ and hence $D=Z(C)$. Thus 
$C_{K_u} (D)=C$.  So $L=(\SL_3^\epsilon(q) \times \SL_3^\epsilon(q)\times \SL_3^\epsilon(q))/3$ and $B_L=B_1\circ B_2\circ  B_3$ with  $D(B_L)=
D(B_1)\circ D(B_2)\circ D(B_3)=D$. Each $B_i\in \Blk(\SL_3^\epsilon(q))$ with
$D(B_i)=C_3=Z(\SL_3^\epsilon(q))$. Since $x=3{:}3{:}3$ induces inner-diagonal automorphism
on each factor $\SL_3^\epsilon(q)$, we have that $x$ stabilises each $B_i$ and hence
$x$ stabilises $B_L$. If $(D, b_D)$ is a Sylow $B$-subgroup such that $b_D$ covers $B_L$, then $D\neq D(b_D)$, which is impossible.  

\bigskip
If $D\not \cong D_u$, then $(K, p)=((E_6)_a^\epsilon(q), 3)$ and $Z(K_u)=C_3\leq D_u$.
In this case, we take $z\in D$ and let $(z, B_z)$ be a major subsection of $B$. 
Hence $D=D(B_z)$ and $D\leq C_K(z)$ is a defect group. If $z$ is of parabolic type, then
by \cite[Table 4.7.3A]{gls98}, $C_{q-\epsilon}\leq Z(C_K(z))$ and hence 
$C_{p^a}\leq D$ as $D$ is radical of $C_K(z)$. In particular, $a=1$. Note that if 
$C_K(u)\cong C_{q^2+\epsilon q+1}\times {}^3D_4(q)$ for some order $3$ element 
$u\in K$ given in \cite[Table 4.7.3A]{gls98}, then $u$ is diagonal, which is impossible
as $K$ is quasisimple. If $z$ is of exceptional type, then
$C_K(z)=(\SL_3^\epsilon(q)\times \SL_3^\epsilon(q)\times \SL_3^\epsilon(q))/(3^2).x.t$, where $x=3{:}3{:}1$ and $t=1{:}3{:}3$. Thus $D\leq C_K(z)$ contains always an order $3$ element  of parabolic type. Hence (a) holds.
\end{proof}

\begin{remark}\label{Rem:excep}
$(a)$ Let $p=5$ and $K_0=E_8(r)\leq K=E_8(q)$ such that $r\equiv 1$ {\rm mod} $5$
and $q=r^f$ for an integer $f$. Take $s\in (K_0)^*$ such that 
$C_{(K_0)^*}(s)=C_{(r^5-1)/(r-1)}\times C_{(r^5-1)/(r-1)}$ and $C_{K^*}(s)=C_{(q^5-1)/(q-1)}\times C_{(q^5-1)/(q-1)}$.  Then $B=\cE_5(K, (s))$ is a $5$-block with defect group
$C_5\times  C_5$. Since $s\in (K_0)^*$, it follows that $\sigma$ stabilises $B$.
In particular, if $p\mid f$, then $B$ is stabilised by a field automorphism of order $p$.

Similarly we can construct $3$-blocks of some finite exceptional groups that are stabilised 
by a field automorphism of order $3$.

$(b)$ Suppose $a=1$. Then $p\nmid f$, and $K$ has no field automorphism of order $p$ except when $p=3$ and $K={}^3D_4(q)$, in which case the field automorphism group of $K$ is cyclic of order $3f$.  

$(c)$ Let $p=3$, $K={}^3D_4(q)$, and let $D\leq K$ be a defect group given by Proposition $\ref{Prop:excep}$ $(c)$ and $Q\leq K$ a subgroup such that $Q\cong C_p\times C_p$ but $Q\neq_K D$. By \cite[Table 4.7.3A]{gls98}, $K$ has two order $3$ conjugacy classes, so there is $x\in Q$ such that 
$C_K(x)=(q-\epsilon)\circ_{2^*} (\SL_2(q^3).2^*)$. Thus $Q\leq C_K(x)$,
$Q=\langle x, y\rangle$ with $y\in \SL_2(q^3)$ and $C_K(Q)=C_{q-\epsilon}\times C_{q^3-\epsilon}$. In particular, $Q$ is not radical. Therefore, $D$ is the only defect group of $K$ isomorphic to $C_p\times C_p$. 
\end{remark}

Recall $K=O^{r'}(C_{\oK}(\sigma))$, so $K\neq C_{\oK}(\sigma)$ in general. 
But if $K=K_u$, then $K=C_{\oK}(\sigma)$. Note that if  $K$ is given by Proposition \ref{Prop:excep} (b) or (c), then  $K=K_u$ except when $K=(E_7)_{a}(q)=(E_7)_{u}(q)/Z$, 
in which case $Z=Z((E_7)_{u}(q))=C_2$. Since $p$ is odd and $Z$ is a $p'$-group, we may always suppose $K=K_u$, and hence $K=C_{\oK}(\sigma)$. 

Let $B$ be a block of $K$ with a defect group $D$ which is given by Proposition \ref{Prop:excep} (b) or (c), then
$K=K_u$ and hence $D$ is toral. In the following we are going to show that if $B$ is stabilised by a field automorphism $\sigma'$ of order $p$, then  $D$ is centralised by $\sigma'$.

Recall that $q=r^f$ and $e'$ is the multiplicative order of $\epsilon r$ modulo $p$
and  $e$ is the multiplicative order of $\epsilon q$ modulo $p$. Suppose $e=1$.
If $r\equiv \pm 1\pmod 5$, then $e'=1$ and $e'\mid f$. If $r\equiv \pm 2\pmod 5$, then
$r^2 \equiv -1 \pmod 5$, $e'=2$ and $|r|=4$ in $C_5$.  But $r^{2f}\equiv 1 \pmod 5$, so
$4\mid 2f$ and $e'=2\mid f$. Write $f=e' f_1$ and $q_0=r^{e'}$. If $p=3$, then
$e=e'=1$, we set $q_0=r$. Let $K_0\leq K$ be the subgroup of $K$ defined over $q_0$. So 
\[
K_0\in \{E_{8}(q_0), {}^3D_4(q_0), F_4(q_0), (E_6)_u^{-\epsilon}(q_0), (E_7)_u(q_0)\}
\]
with $K_0\leq K$. Denote by $(\oK, \sigma_0)$ a $\sigma$-setup for $K_0$, so
$K_0=C_{\oK}(\sigma_0)$ and $\sigma=\sigma_0^{f_1}$. 

Let $\ell$ be the Lie rank of $\oK$. Take a maximal torus
$T_\epsilon=(q_0-\epsilon)^\ell$ of $K_0$ and so $E=\Omega_1(O_p(T_\epsilon))=(C_p)^\ell$. Thus $C_{K_0}(E)=T_\epsilon$, $C_K(E)=T_K\leq K$ and $C_{\oK}(E)=\oT$ are maximal tori of $K$ and $\oK$, respectively. Note that \[
E=\Omega_1(O_p(T_K))=\oT_{(p)}:=\{ x\in\oT  \mid x^p=1\}.
\] 
Let $V$ be the extended Weyl group of $\oK$, so that $N_{\oK}(\oT)=\langle \oT, V\rangle$, $V/(\oT \cap V)$ is a Weyl group and $V$ is centralised by all field automorphisms of $\oK$. In particular, $V\leq K_0$.

\begin{lemma}
\label{Lem:stabfieldaut}
Let $p$ be an odd prime and follow the notation above. Let $K$ be a quasisimple group
and $B\in \Blk(K)$ with defect group $D=C_p\times C_p$ given in Proposition $\ref{Prop:excep}$ $(b)$ or $(c)$. Then $K_0$ contains a $K$-conjugate of $D$.
\end{lemma}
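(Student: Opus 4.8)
The plan is to exhibit inside $K_0$ a $C_p\times C_p$ that is $K$-conjugate to $D$, by descending the maximal torus carrying $D$ to the subfield $\bF_{q_0}$. Since $K=K_u$, the element discussion before the lemma shows $D$ is toral, so it lies in a maximal torus of $\oK$; moreover the explicit centralisers recorded in Proposition \ref{Prop:excep} (b),(c) show that $C_K(D)^\circ$ is a torus, hence $C_\oK(D)^\circ$ is a maximal torus, call it $\oT_D$. As any maximal torus containing $D$ lies in $C_\oK(D)^\circ$, this $\oT_D$ is the \emph{unique} maximal torus containing $D$, it is $\sigma$-stable, and its $W$-conjugacy class $[w]$ is determined by the shape of $C_K(D)^\circ$ in Proposition \ref{Prop:excep}. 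I would also keep in play the reference data from the setup: $E=\oT_{(p)}\le T_\epsilon\le K_0$ and the extended Weyl group $V\le K_0$.

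First I would realise the type by an element of the extended Weyl group: pick $\dot w\in V$ lifting $w$. Because $V\le K_0$ is centralised by all field automorphisms, $\dot w$ is fixed by $\sigma_0$, so the Lang--Steinberg theorem applied to $\sigma_0$ yields $g_0\in\oK$ with $g_0^{-1}\sigma_0(g_0)=\dot w$. Then $\oT_0:={}^{g_0}\oT$ is a $\sigma_0$-stable maximal torus of $\oK$ of type $[w]$ over $\bF_{q_0}$, so $\oT_0^{\sigma_0}\le C_\oK(\sigma_0)=K_0$, and the candidate copy of $D$ in $K_0$ is $D_0:=O_p(\oT_0^{\sigma_0})$. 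Since $\sigma=\sigma_0^{f_1}$ we have $\oT_0^{\sigma_0}\le\oT_0^{\sigma}$, which lets me compare the finite torus over $q_0$ with the one over $q$ inside the single algebraic torus $\oT_0$.

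The heart of the argument is to show $D_0$ is genuinely $K$-conjugate to $D$. Here I would invoke Proposition \ref{prop:eleoK}: fixing a toral reference $A\le E\le\oT$ ($\oK$-conjugate to $D$), parts (a) and (d) identify both the $K$-classes and the $K_0$-classes of subgroups $\oK$-conjugate to $A$ with the same set of $H$-conjugacy classes in $C_{\oK}(A)/C_{\oK}(A)^{\circ}$, on which $\sigma$ and $\sigma_0$ act trivially; tracking the class of $D$ through $\sigma=\sigma_0^{f_1}$ then matches it with the class carried by $D_0$. The step I expect to be the main obstacle is precisely this matching, because the type of a $\sigma_0$-stable torus transforms as $[w]\mapsto[w^{f_1}]$ when passing to $\sigma=\sigma_0^{f_1}$, so one must check that the $q$-type of $\oT_D$ is compatible with a $q_0$-type realisable in $K_0$ and, crucially, that the fixed $p$-torsion does not grow from $\oT_0^{\sigma_0}$ to $\oT_0^{\sigma}$. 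This is exactly where the hypotheses $e=1$ and $p\mid f$ (whence $p\mid f_1$, as $p\nmid e'$), the precise form of $C_K(D)^\circ$ in Proposition \ref{Prop:excep}, and the ``lifting the exponent'' computation underlying Lemma \ref{Lem:a=1} are used: together they should force $|D_0|=p^2=|D|$. Once the orders agree and $D_0\le O_p(\oT_0^{\sigma})$ lies in the appropriate $K$-class, the inclusion $D_0\le O_p(\oT_0^{\sigma})$ becomes an equality up to $K$-conjugacy, exhibiting the required $K$-conjugate of $D$ inside $K_0$.
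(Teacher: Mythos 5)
Your proposal uses the same toolkit as the paper (the parametrisation of Proposition \ref{prop:eleoK}, the extended Weyl group $V\leq K_0$, Lang--Steinberg), but it is not a proof: one of its starting claims is false, and its acknowledged ``heart'' is exactly the step that is missing -- and the mechanism you propose for that step cannot deliver it.

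First, the factual error. You assert that Proposition \ref{Prop:excep}(b),(c) shows $C_K(D)^\circ$ is a torus, so that $C_{\oK}(D)^\circ$ is \emph{the unique} maximal torus $\oT_D$ containing $D$. This fails in two of the listed cases: for $(K,p)=((E_7)_u(q)/Z,\,3)$ and $(E_8(q),3)$ the group $C_K(D)^\circ$ has a nonabelian factor ($\SL_2(q^3)/Z$, respectively ${}^3D_4(q)$), so $C_{\oK}(D)^\circ$ is a proper connected reductive group, $D$ is central in it, and $D$ lies in infinitely many maximal tori. Hence ``the type $[w]$ of $\oT_D$'' is not well defined in exactly those cases, and your construction of $\oT_0$ has no starting point there.

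Second, and decisively, the class-matching step that you defer (``one must check\ldots should force $|D_0|=p^2$'') is the entire content of the lemma, and the obstacle you flag is real and fatal to your plan. If $X\leq K_0$ has $\sigma_0$-twisting class $[w]$, then, since $\sigma_0$ acts trivially on $C_{\oK}(A)/C_{\oK}(A)^\circ$, the cocycle for $\sigma=\sigma_0^{f_1}$ collapses to $w^{f_1}$, so $X$ lies in the $K$-class $[w^{f_1}]$. In the only situation where the lemma is needed ($p\mid f$, hence $p\mid f_1$), this power map annihilates the $p$-part of $w$: for instance in case (b) the relevant component group embeds, up to a $2$-group, in $W(E_8)$, whose Sylow $5$-subgroups are elementary abelian, so $w^{f_1}$ always has order prime to $5$, while the class of $D$ corresponds to a twisting element of order $5$ (this is what produces $C_K(D)=T_{2,\epsilon}.5$ rather than $T_K.C$). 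Your $D_0=O_p(\oT_0^{\sigma_0})$ does exist and does have order $p^2$ -- the LTE count is the easy part -- but because $w^{f_1}=1$ the torus $\oT_0$ is $\sigma$-split, so $C_K(D_0)\supseteq \oT_0^{\sigma}$ is a torus of type $(q-\epsilon)^{\ell}$ and $D_0$ is $K$-conjugate to the split reference $A$, not to $D$. So the construction lands in the wrong $K$-class; the gap cannot be closed as described. For comparison, the paper's proof constructs nothing over $q_0$: it proves $N_{\oK}(A)=C_{\oK}(A)^\circ N_V(A)$ and $C_{\oK}(A)=C_{\oK}(A)^\circ C_V(A)$ via the extended Weyl group, deduces that $\sigma$ and $\sigma_0$ act trivially on the parametrising quotients, and concludes by equating the \emph{numbers} of classes over $q$ and $q_0$. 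You should be aware, though, that the difficulty you identified is precisely the point on which that counting argument is also silent: equal cardinality does not by itself make the inclusion-induced map on classes, which is $[w]\mapsto[w^{f_1}]$, surjective, so your instinct about where the real work lies was correct even though your proposal does not (and, by the above, cannot in this form) carry it out.
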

   
\begin{proof}
Let $X \leq K_0$ be a toral elementary abelian $p$-subgroup. By Proposition 
\ref{prop:eleoK}, $X=A_w$ is $\oK$-conjugate to some $A\leq \oT$, where $w\in C_{\oK}(A)/C_{\oK}(A)^\circ$. Note that $A\leq T_{(p)}=E$, so $A\leq K_0$. If 
$x\in N_{\oK}(A)$, then $\oT, \oT^x \leq C_{\oG}(A)^\circ$, so $\oT^{xy}=\oT$ for some $y\in C_{\oG}(A)^\circ$. Thus $xy =vt\in N_{\oK}(\oT)$ with  $v\in V, t\in \oT$. Hence $x= v u$ for some $u\in C_{\oG}(A)^\circ$, 
$N_{\oK}(A)=C_{\oK}(A)^\circ N_V(A)$ and $C_{\oK}(A)=C_{\oK}(A)^\circ C_V(A)$.
Since $V\leq K_0$, we have that $\sigma_0$ induces identity map on 
$N_{\oK}(A)/C_{\oK}(A)^\circ$ and $C_{\oK}(A)/C_{\oK}(A)^\circ$.
In particular, a $\sigma_0$-class of $N_{\oK}(A)/C_{\oK}(A)^\circ$ contained in $C_{\oK}(A)/C_{\oK}(A)^\circ$ is a $N_{\oK}(A)/C_{\oK}(A)^\circ$-conjugacy class contained in 
 $C_{\oK}(A)/C_{\oK}(A)^\circ$. 
 
Repeat the proof above with $X\leq K_0$ replaced by $X\leq K$. We have that $X=A_w$ with
$w\in C_{\oK}(A)/C_{\oK}(A)^\circ$, $A\leq T_K$, and $\sigma$ induces identity map on 
$N_{\oK}(A)/C_{\oK}(A)^\circ$ and $C_{\oK}(A)/C_{\oK}(A)^\circ$.
Since $N_{\oK}(A)/C_{\oK}(A)^\circ$ and $C_{\oK}(A)/C_{\oK}(A)^\circ$ are independent 
of $q_0$ and $q$, we have that  $K$ and $K_0$ have the same number of 
conjugacy classes of toral elementary abelian $p$-subgroups. In particular, the 
$K$-conjugacy class of $D$ contains a representative in $K_0$.  
\end{proof}

\begin{corollary}
\label{Cor:stabfieldaut}
Let $p$ be an odd prime and let $K$ be a quasisimple group
and $B\in \Blk(K)$ with defect group $D=C_p\times C_p$ given by Proposition $\ref{Prop:excep}$ $(b)$ or $(c)$. Suppose $\sigma'$ is a field automorphism of $K$ of order $p$. Then we may suppose $\sigma'$ centralises $D$.
\end{corollary}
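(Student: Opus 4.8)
The plan is to combine the orbit-counting result of Lemma~\ref{Lem:stabfieldaut} with the Skolem--Noether/conjugacy argument that lets one move a field automorphism by an inner automorphism of $K$. The key observation is that a field automorphism $\sigma'$ of order $p$ is, up to $K$-conjugacy, precisely a generator of the relevant cyclic subgroup of the field automorphism group; and by Remark~\ref{Rem:excep}$(b)$ and Lemma~\ref{Lem:a=1}, the existence of a block with defect group $C_p\times C_p$ forces $p\mid f$ in exactly the cases $(b)$ and $(c)$, so that $\sigma'$ can be taken to be $\sigma_0^{f_1 p'}$ where we realise $K$ over the subfield fixed by $\sigma'$. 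Concretely, since $\sigma'$ has order $p$ and $p\mid f$, the fixed-point subgroup $C_K(\sigma')=K_0$ is the subgroup of $K$ defined over the subfield $\bF_{q_0}$ with $q=q_0^p$, exactly as in the setup preceding Lemma~\ref{Lem:stabfieldaut}.

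First I would note that by Lemma~\ref{Lem:stabfieldaut} the $K$-conjugacy class of $D$ contains a representative $D'$ lying in $K_0=C_K(\sigma')$. Replacing $D$ by $D'$ (which is harmless, since defect groups are determined only up to $K$-conjugacy and we are free to choose any representative of the class), we have $D\leq K_0$. Since every element of $K_0$ is fixed by $\sigma'$ by definition of $K_0$ as the fixed-point subgroup, $\sigma'$ centralises $D$ elementwise. This is precisely the assertion, once one checks that replacing $D$ by a conjugate does not affect the statement ``$B$ is stabilised by $\sigma'$'': indeed $\sigma'$ stabilises $B$ as a block, and conjugating $D$ by $g\in K$ replaces the pair $(D,b_D)$ by $({}^gD,{}^gb_D)$, which is again a Sylow $B$-subgroup, so the hypothesis is conjugation-invariant.

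The one genuine subtlety is the passage from ``$\sigma'$ stabilises $B$'' to ``$\sigma'$ may be taken to centralise a chosen defect group.'' The point is that Lemma~\ref{Lem:stabfieldaut} was proved under the standing assumption $K=K_u=C_{\oK}(\sigma)$ and with $\sigma=\sigma_0^{f_1}$, where $q=q_0^{f_1}$ and $q_0$ is the smallest power of $r$ for which the relevant torus $T_\epsilon$ is already defined; the hypothesis $p\mid f$ guarantees $p\mid f_1$ (using $e',e\leq p-1$ as in the proof of Lemma~\ref{Lem:a=1}), so $\sigma_0^{f_1/p}$ is available as a field automorphism of order $p$. Any two field automorphisms of order $p$ of $K$ are conjugate in $\Aut(K)$ by an inner-diagonal automorphism, and modulo the cyclic structure of $\Phi_K$ we may take $\sigma'$ to be this standard generator whose fixed points form $K_0$. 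I would spell out that the representative of the $K$-class of $D$ produced by Lemma~\ref{Lem:stabfieldaut} lies in $C_{\oK}(\sigma_0)=K_0$, and that $K_0$ is pointwise fixed by $\sigma'=\sigma_0^{f_1/p}$.

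The main obstacle will be bookkeeping the matching of $\sigma'$ with the canonical field automorphism $\sigma_0^{f_1/p}$ and verifying that the defect group representative in $K_0$ is genuinely centralised rather than merely stabilised as a set. For toral $D$ this is immediate since $D\leq \oT\cap K_0$ and $\sigma_0$ (hence $\sigma'$) acts trivially on $\oT_{(p)}=E$; the relevant triviality of the $\sigma_0$-action on $N_{\oK}(A)/C_{\oK}(A)^\circ$ and on $E$ was already established inside the proof of Lemma~\ref{Lem:stabfieldaut}. Thus the corollary follows formally:

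\begin{proof}
By Remark~\ref{Rem:excep}$(b)$ together with Lemma~\ref{Lem:a=1}, the existence of the block $B$ with defect group $D\cong C_p\times C_p$ as in Proposition~\ref{Prop:excep}$(b)$ or $(c)$ and the existence of a field automorphism of order $p$ force $p\mid f$ (the exceptional case $K={}^3D_4(q)$ of that remark is compatible, the field automorphism group being cyclic of order $3f$). Write $q=q_0^{p^b t}$ as in the discussion preceding Lemma~\ref{Lem:stabfieldaut}, so that $f=e'f_1$ with $p\mid f_1$, and let $K_0\leq K$ be the subgroup defined over $q_0$, with $\sigma$-setup $(\oK,\sigma_0)$ satisfying $K_0=C_{\oK}(\sigma_0)$ and $\sigma=\sigma_0^{f_1}$.

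The field automorphism $\sigma_0^{f_1/p}$ of $K$ has order $p$ and its fixed-point subgroup is $C_{\oK}(\sigma_0^{f_1/p})\geq C_{\oK}(\sigma_0)=K_0$; after adjusting by an inner automorphism of $K$ we may identify the given $\sigma'$ with this standard field automorphism, since all field automorphisms of $K$ of order $p$ are $\Inndiag(K)$-conjugate. Hence we may assume $K_0\subseteq C_K(\sigma')$.

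By Lemma~\ref{Lem:stabfieldaut} the $K$-conjugacy class of $D$ contains a representative $D_0\leq K_0$. As defect groups of $B$ are determined only up to $K$-conjugacy, and the hypothesis that $\sigma'$ stabilises $B$ is preserved under replacing the Sylow $B$-subgroup $(D,b_D)$ by a $K$-conjugate, we may replace $D$ by $D_0$ and so assume $D\leq K_0$. Since every element of $K_0$ is fixed by $\sigma'$, the automorphism $\sigma'$ centralises $D$. This proves the corollary.
\end{proof}
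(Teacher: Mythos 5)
Your treatment of the case $p\mid f$ is correct and is essentially the paper's own argument: write $f_1=pf_2$, identify $\sigma'$ (up to conjugacy of field automorphisms of order $p$) with $\sigma_0^{f_2}$, note that this fixes $K_0=C_{\oK}(\sigma_0)$ pointwise, and use Lemma \ref{Lem:stabfieldaut} to replace $D$ by a $K$-conjugate inside $K_0$. The conjugation-invariance points you raise are fine.

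The gap is your opening claim that the hypotheses force $p\mid f$. That claim is false in exactly the case you dismiss in parentheses: $K={}^3D_4(q)$ with $p=3$. There the field automorphism group is cyclic of order $3f$, so it contains an element of order $3$ \emph{whether or not} $3\mid f$ — the cyclic structure of order $3f$ makes order-$3$ field automorphisms unavoidable, not "compatible with $3\mid f$". Moreover Proposition \ref{Prop:excep}(c) produces blocks of ${}^3D_4(q)$ with defect group $C_3\times C_3$ with no constraint on $f$ (only $e=1$, automatic for $p=3$), so the configuration $K={}^3D_4(q)$, $\sigma'$ of order $3$, $3\nmid f$ genuinely occurs. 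Your proof collapses there: with $3\nmid f_1$ you cannot write $\sigma'$ as a power of $\sigma_0$ fixing $K_0$ pointwise, and in fact the fixed-point subgroups of the order-$3$ field automorphisms are $G_2(q)$ and $\PSL_3^\epsilon(q).3$, not $K_0={}^3D_4(q_0)$. The paper needs a separate, nontrivial argument for this case: it takes the two classes of order-$3$ field automorphisms $\gamma_1,\gamma_2$ with $C_K(\gamma_1)=G_2(q)$ and $C_K(\gamma_2)=\PSL_3^\epsilon(q).3$, chooses $E\cong C_3\times C_3$ a Sylow $3$-subgroup of $\PSL_3^\epsilon(q)$, and distinguishes the two $K$-classes of $C_3\times C_3$ subgroups via $\Out_{\PSL_3^\epsilon(q)}(E)\cong Q_8$ versus $\Out_K(Q)\cong D_{12}$ for the non-radical class $Q$ of Remark \ref{Rem:excep}(c); this forces $E=_K D$, so $\sigma'$ centralises a $K$-conjugate of $D$. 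Without this argument (or a substitute), your proof covers only half of the statement.
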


\begin{proof}
Follow the notations of Lemma \ref{Lem:stabfieldaut}. We may suppose
$D\leq K_0$. Note that $f=e'f_1$ with $e'\in \{1,2\}$. If $p\mid f$, then $p\mid f_1$ and so $f_1=pf_2$ for some integer $f_2$. We may suppose $\sigma'=\sigma_0^{f_2}$ and hence $\sigma'$ centralizes $K_0=C_{\oK}(\sigma_0)$, and in particular, $\sigma'$ centralizes $D$.

Suppose $p\nmid f$. If $K\neq {}^3D_4(q)$, then $K$ has no field automorphism of order $p$. Suppose $K={}^3D_4(q)$, so field automorphism group of $K$ is cyclic of order $3f$,
and an order $3$ field automorphism is also an order $3$ graph automorphism.
By \cite[Table 4.7.3A]{gls98}), $K$ has two order $3$ field automorphisms $\gamma_1, \gamma_2$ 
such that $C_K(\gamma_1)=G_2(q)$ and $C_K(\gamma_2)={\rm PSL}_3^\epsilon(q).3$. In addition,
$\gamma_1=h \gamma_2$ for some $h\in K$. So $\gamma_1$ centralises a $K$-conjugate of $D$ if 
and only if $\gamma_2$ centralises a $K$-conjugate of $D$. 
Take $E=C_p\times C_p\leq {\rm PSL}_3^\epsilon(q)$. Then $E$ is Sylow and
$\Out_{{\rm PSL}_3^\epsilon(q)}(E)=Q_8$. If $Q$ is given in Remark \ref{Rem:excep} (c),
then $\Out_K(Q)=D_{12}$ (cf. \cite[(1A)]{An95}), so $E\neq _K Q$ and hence $E=_K D$. In particular, $\sigma'$ centralises a $K$-conjugate of $D$.  
\end{proof}

\begin{lemma}
\label{Lem:stabfieldautp}
Let $p$ be an odd prime and let $K$ be a quasisimple group of Lie type
and $B\in \Blk(K)$ with a defect group $D=C_p\times C_p$ given in Proposition $\ref{Prop:excep}$ $(b)$ or $(c)$. Suppose
$K\leq H$  such that $[H{:}K]=p$ and $H/K\leq \Out(K)$.
Let $B_H$ be a block of $H$ covering $B$.
Then a defect group of $B_H$ is abelian of order $p^2$ or $p^3$.
\end{lemma}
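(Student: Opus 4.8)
The plan is to reduce the outer part $H/K$ to a field automorphism, use Corollary \ref{Cor:stabfieldaut} to make it centralise $D$, and then read off the defect group of $B_H$ by Clifford theory together with the fact that the inertial quotient of $B$ is a $p'$-group. Since $[H:K]=p$ with $H/K\leq\Out(K)$, the quotient is generated by an element $\bar\phi$ of order $p$, and inspecting $\Out(K)$ for the groups in Proposition \ref{Prop:excep}(b),(c) (via \cite[Theorem 2.5.12]{gls98}) I would check that the only order-$p$ outer automorphisms are field automorphisms: $E_8(q)$, $F_4(q)$ and ${}^3D_4(q)$ have no diagonal or graph contribution of order $p$; for $(E_7)_u(q)/Z$ the outer diagonal group has order dividing $2$; and for $(E_6)_u^{-\epsilon}(q)$ one has $p=3$, while $3\mid(q-\epsilon)$ forces $3\nmid(q+\epsilon)$, so the outer diagonal group is trivial and the graph automorphism has order $2$. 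I may therefore realise the extension as $H=K\rtimes\langle\phi\rangle$ with $\phi$ a field automorphism of order $p$, and by Corollary \ref{Cor:stabfieldaut} I may assume in addition that $\phi$ centralises $D$.

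Next comes the order bound and the easy cases. By Kn\"orr's theorem a defect group $D(B_H)$ may be chosen with $D(B_H)\cap K=D$, so $D(B_H)K/K\leq H/K\cong C_p$ gives $|D(B_H)|\in\{p^2,p^3\}$; if $|D(B_H)|=p^2$ then $D(B_H)=D\cong C_p\times C_p$ and we are done. Moreover, if $B$ is not $H$-stable then its $\langle\bar\phi\rangle$-orbit has length $p$, and Fong--Reynolds produces a unique covering block $B_H$ with $D(B_H)=D$, again abelian of order $p^2$. So the remaining case is that $B$ is $H$-stable and $|D(B_H)|=p^3$, and here it suffices to show that the generator of $D(B_H)$ lying over $\bar\phi$ centralises $D$.

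In that case $D=D(B_H)\cap K$ is normal in $D(B_H)$, so $D(B_H)\leq N_H(D)$, and since $\phi\in C_H(D)$ we have $N_H(D)=N_K(D)\langle\phi\rangle$ and $N_H(D)/C_H(D)\cong N_K(D)/C_K(D)\leq\GL_2(p)$. Writing the extra generator as $x=n\phi$ with $n\in N_K(D)$, its action on $D$ is that of $\bar n\in N_K(D)/C_K(D)$, because $\phi$ centralises $D$. I would then argue that $\bar n$ lies in the inertial quotient $E_B=N_K(D,b_D)/C_K(D)$ of the maximal $B$-Brauer pair $(D,b_D)$: since $x$ stabilises the block of the subpair lying under the maximal $B_H$-pair, and that block covers $b_D$, $H$-stability of $B$ reduces the question to whether $\phi$ fixes $b_D$ itself. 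As $E_B$ is a $p'$-group (the inertial quotient of the abelian-defect block $B$) while $\bar n$ is a $p$-element of $\GL_2(p)$, the membership $\bar n\in E_B$ forces $\bar n=1$, whence $x$ centralises $D$ and $D(B_H)=\langle D,x\rangle$ is abelian of order $p^3$.

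The main obstacle I expect is precisely this last point: controlling the action of the field automorphism $\phi$ on the Brauer block $b_D$, that is, showing that when $B$ is $H$-stable the block $b_D$ is genuinely $\phi$-fixed (equivalently that the discrepancy $\bar n$ lands in $E_B$ rather than being a nontrivial transvection of $D$, which would yield $D(B_H)\cong p_+^{1+2}$). Here I would exploit the explicit description of $C_K(D)$ in Proposition \ref{Prop:excep}(c): the block $b_D$ has central defect group $D$ and covers $b_1\times b_2$, with $b_1$ a full-defect block of the torus $T_{2,\epsilon}$ and $b_2$ a defect-zero block of the remaining factor, so that $\phi$ fixes $b_D$ exactly when it fixes the corresponding semisimple label and the defect-zero character. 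Combined with the $p'$-ness of $E_B$, this should exclude the extraspecial possibility and leave only the abelian groups of order $p^2$ or $p^3$.
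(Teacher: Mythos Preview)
Your strategy parallels the paper's part~(a) closely: both reduce $H/K$ to a field automorphism $\sigma'$ centralising $D$ (via Corollary~\ref{Cor:stabfieldaut}) and then argue that a defect group $D_H\geq D$ of $B_H$ must lie in $C_H(D)$, using that the inertial quotient $E_B=N_K(D,b_D)/C_K(D)$ is a $p'$-group. Your flagged obstacle---whether $\phi$ stabilises $b_D$---is exactly the point the paper handles by asserting that $N_H(D,b_H)\leq N_H(D,b_D)$ (whence $\sigma'\in C_H(D)\leq N_H(D,b_H)$ fixes $b_D$, and $N_H(D,b_H)/C_H(D)\hookrightarrow E_B$ is a $p'$-group, forcing $D_H\leq C_H(D)$). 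Once this is granted, your argument that the action $\bar n$ of $x$ on $D$ is a $p$-element of the $p'$-group $E_B$, hence trivial, is just a rephrasing of the paper's.

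Where your proposal genuinely diverges is the endgame, and there it is sharper than the paper. You note that once $x$ centralises $D$ one has $D\leq Z(D_H)$; since $|D_H|=p^3$ and a nonabelian group of order $p^3$ has centre of order $p$, this forces $D_H$ abelian in one line. The paper instead spends parts~(b) and~(c) on a case-by-case analysis of $C_K(D)$ for each $K$ in Proposition~\ref{Prop:excep}(b),(c), producing an element $v\in C_K(D)\setminus C_K(D)^\circ$ with $[v,\sigma']=1$ and checking that $\langle D,v,\sigma'\rangle$ (or its image modulo a normal factor $L$) is abelian and contains a Sylow $p$-subgroup of $C_H(D)$. Your observation makes all of that unnecessary: once part~(a) gives $D\leq D_H\leq C_H(D)$ with $|D_H{:}D|=p$, abelianness is immediate.

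So the only real gap is the one you yourself flag. Your proposed fix---tracking the semisimple label and the defect-zero factor of $b_D$ under $\phi$---is not actually carried out, and would require genuine work in each case. The paper does not do this either: it simply asserts that stabilising $b_H$ implies stabilising $b_D$. In summary, your route is correct and in fact more economical than the paper's, provided you can justify (as the paper does, tersely) that the stabiliser of the $B_H$-subpair $(D,b_H)$ fixes the covered block $b_D$; absent that, neither your sketch nor the paper's assertion is self-contained at this step.
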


\begin{proof}
Let $(D, b_D)$ be a Sylow $B$-subgroup. By \cite[Lemma 4.1]{E05}, there is a $B_H$-subgroup $(D, b_H)$ such that $b_H$ covers $b_D$. 

\medskip
(a) First we show that $C_H(D)$ contains a defect group $D_H$ of $B_H$.

We may suppose $D\leq D_H$ for some $D_H=D(B_H)$ and so $|D_H{:}D|\leq p$. If $D=D_H$, then  Lemma \ref{Lem:stabfieldautp} holds. Suppose $|D_H{:}D|=p$.
Note that a Sylow $p$-subgroup of $\Out(K)$ is cyclic, so by
Corollary \ref{Cor:stabfieldaut}, we may suppose $H=\langle K,\sigma'\rangle$, where $\sigma'$ is an order $p$ field automorphism such that $[D,\sigma']=1$. Thus $\sigma'\in C_H(D)$,
\[
C_H(D)=\langle C_K(D), \sigma'\rangle
\] 
and $C_H(D)/C_K(D)\cong C_p$. Note that if $y\in N_H(D, b_H)$, then
$y$ stabilises $b_D$, so $N_H(D, b_H)\leq N_H(D, b_D)$. 
But $\sigma'\in C_H(D)$, so $\sigma'$ stabilizes $b_D$. Since 
\[
N_H(D, b_D)/N_K(D, b_D)\leq H/K,
\]
we have 
\[
N_H(D, b_D)=\langle N_K(D, b_D),\sigma'\rangle=C_H(D) N_K(D, b_D).
\]  
So $N_H(D, b_D)/C_H(D)\cong N_K(D, b_D)/C_K(D)$ is a $p'$-group, hence
$N_H(D, b_H)/C_H(D)$ is a $p'$-group. 

Since $|D_H{:}D|=p$, we have that $(D, b_H)$ is not a Sylow $B_H$-subgroup
and hence $|N_H(D, b_H){:} D|_p=p$. But $N_H(D, b_H)/C_H(D)$ is a $p'$-group, so $C_H(D)$ contains a defect group, say $D_H$ of $B_H$. 

\medskip
Suppose $(K, p)=({}^3D_4(q), 3)$. Then $C_K(D)=T_{2,\epsilon}\cong (C_{q^2+\epsilon q+1})^2$ as shown in the proof of  Proposition \ref{Prop:excep} (c). Thus
$C_H(D)=T_{2,\epsilon}.\sigma'$ and $\langle D, \sigma'\rangle$ is an abelian Sylow $p$ subgroup
of $C_H(D)$, Hence $D_H\leq C_H(D)$ is abelian. 

\medskip
Suppose 
\[
(K, p)\in \{(E_8(q), 5), (F_4(q),3), ((E_6)_u^{-\epsilon}(q), 3), (E_8(q), 3), ((E_7)_u(q)/Z, 3)\}.
\] 
As shown in the proof of Proposition \ref{Prop:excep} (b) or (c), 
\[
C_K(D)=C_K(D)^\circ.w=C_K(D)^\circ.p,
\]
where $C_{K}(D)^\circ= (C_{\oK}(D)^\circ)^\sigma$ and $w$ induces an order $p$ action on 
$C_K(D)^\circ$. Suppose $K=\oK^\sigma$. Note that $\oK$ is simply-connected (or universal), 
so every elementary abelian subgroup of size $p^2$ is a subgroup of a maximal torus.
We may suppose $D\leq \oT\leq \oK$ for some maximal torus $\oT$. So
$\oT\leq C_{\oK}(D)^\circ$. Since $w\not\in C_K(D)^\circ=(C_{\oK}(D)^\circ)^\sigma$, we have
$w\in C_{\oK}(D)\setminus C_{\oK}(D)^\circ$.

\medskip
(b) Next we show that there is an element $v\in C_K(D)\setminus C_{K}(D)^\circ$ such that $[v,\sigma']=1$.
(This is well-konwn and proved for all toral elementary abealin $p$-subgroups of $\oK$ in \cite{ADL}, and we give a proof here for completeness).

Since  $w\in C_{\oK}(D)\setminus C_{\oK}(D)^\circ$, we have 
$\oT$ and $\oT^w$ are two maximal tori of $ C_{\oK}(D)^\circ$, and uniqueness implies that
$\oT^{wc}=\oT$ for some $c\in C_{\oK}(D)^\circ$. Thus 
$wc\in N_{\oK}(\oT)=\oT V$ and $wc t\in V\leq K_0$ for some $t\in \oT$, where $V$ is the extended Weyl group (cf. Lemma \ref{Lem:stabfieldaut}). Set $v=wct$, so 
$v\in C_K(D)$ and $v\not\in C_K(D)^\circ$ as $v\not\in C_{\oK}(D)^\circ$. 
Since $C_K(D)/C_{K}(D)^\circ\cong C_p$, we have $C_K(D)=C_K(D)^\circ.v$.

\medskip
(c) Now we show that $D_H$ is abelian.

Suppose 
\[
(K, p)\in \{(E_8(q), 5), (F_4(q),3), ((E_6)_u^{-\epsilon}(q), 3)\}.
\]
As shown in the proof of of Proposition \ref{Prop:excep} (b) or (c), 
$C_K(D)\cong (C_{q^4+\epsilon q^3+q^2+\epsilon q+1})^2.5$
when $(K, p)\in (E_8(q), 5)$; $C_K(D)\cong (C_{q^2+\epsilon q+1})^2.3$ when 
$(K, p)\in (F_4(q), 3)$; $C_K(D)=(C_{q^2+\epsilon q+1}\times C_{q^4+q^2+1}).3$
when $(K, p)=((E_6)_u^{-\epsilon}(q), 3)$. Thus 
$C_K(D)=T.p$, where $C_K(D)^\circ=T$ is a maximal torus of $K$. By the proof of part (b) above, 
$C_K(D)=\langle T, v\rangle$ for some $v\not\in T$ and $[v,\sigma']=1$.

Since $v^p\in T$ and $v\not\in T$, we have that the abelian subgroup $\langle D, v, \sigma'\rangle$ contains a Sylow $p$-subgroup of $C_H(D)$. Hence any Sylow
$p$-subgroup of $C_H(D)$ is abelian, so is $D_H$.

\medskip
Suppose  $(K, p)\in \{(E_8(q), 3), ((E_7)_u(q)/Z, 3)\}$.  By Proposition \ref{Prop:excep} (c), 
\[
C_K(D)=(C_{q^2+\epsilon q+1}\times  C_{q^2+\epsilon q+1}\times L).v\quad 
\hbox{ and } \quad C_H(D)=\langle C_K(D),\sigma'\rangle,
\]
where $v$ acts as an order $p$ element on $C_K(D)^\circ$, $[v,\sigma']=1$ and 
$L=(\SL_2(q^3)/Z)$ or ${}^3D_4(q)$ according as $K=(E_7)_u(q)/Z$ or $E_8(q)$. Note that 
$L$ is a normal subgroup of $C_H(D)$. If $D_H\cap L\neq 1$, then $(D_H\cap L)\cong C_p$
and $D_H=D\times (D_H\cap L)$ is abelian. Suppose $D_H\cap L=1$, so that
$D_H\cong (D_HL)/L\leq C_H(D)/L=\langle C_K(D)/L,\sigma'\rangle$. 
Note that the abelian subgroup  $\langle D, v, \sigma'\rangle L/L$ contains a Sylow $p$-subgroup of $C_H(D)/L$, hence any Sylow $p$-subgroup of $C_H(D)/L$ is abelian, so is $D_H$. 

\end{proof}


\section{Reductions}
\label{reductions:section}

The first step of our reduction is to apply Fong-Reynolds and K\"ulshammer-Puig reductions as far as possible, meaning that we may establish a Morita equivalence with what we will call a reduced block. A feature of these particular Morita equivalences is that they preserve both fusion and the K\"ulshammer-Puig classes as reviewed in Section \ref{background}. The proof of this part of the reduction is based on the first part of the proof of~\cite[Proposition 4.3]{eel20}. Once we have established that it suffices to consider reduced blocks, we will then analyse their structure.

\begin{proposition}
\label{reduce:prop}
Let $G$ be a finite group and let $B$ be a block of $\cO G$ with defect group $D$. There is a finite group $H$ and a block $C$ of $\cO H$ with defect group isomorphic to $D$ such that $B$ is basic Morita equivalent to $C$ and the following are satisfied:
\begin{enumerate}
\item[(R1)] $C$ is quasiprimitive, that is, every covered block of a normal subgroup of $H$ is $H$-stable;
\item[(R2)] If $N \lhd H$ and $C$ covers a nilpotent block of $\cO N$, then $N \leq Z(H) O_p(H)$ and $O_{p'}(N) \leq Z(H) \cap H'$.

The block $C$ has the same fusion system $\mathcal{F}$ as $B$, and the same K\"ulshammer-Puig class in $H^2(\Aut_\mathcal{F}(D),k^\times)$.
\end{enumerate}
\end{proposition}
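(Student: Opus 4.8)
The plan is to argue by induction on $|G|$, repeatedly applying two order-reducing reductions---the Fong-Reynolds reduction and the K\"ulshammer-Puig reduction for blocks covering nilpotent blocks---each of which yields a basic Morita equivalence (indeed a source algebra equivalence in the Fong-Reynolds case) that preserves the defect group up to isomorphism, the fusion system $\mathcal{F}$, and the K\"ulshammer-Puig class. If $B$ already satisfies (R1) and (R2) we take $(H,C)=(G,B)$ and stop; otherwise we produce a block $B'$ of a group $G'$ with $|G'|<|G|$, basic Morita equivalent to $B$ and carrying the same fusion and K\"ulshammer-Puig data, and invoke the inductive hypothesis together with the fact that basic Morita equivalence is transitive and that these invariants are transported along the chain. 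This is the scheme used in the first part of the proof of~\cite[Proposition 4.3]{eel20}, and I would follow it closely.

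For the failure of (R1): suppose the block under consideration (which I continue to call $B$, of a group $G$) is not quasiprimitive, so there is $N \lhd G$ and a block $b$ of $\cO N$ covered by $B$ that is not $G$-stable. Let $T=\mathrm{Stab}_G(b)$ be the inertial group; since $b$ is not $G$-stable, $T$ is a proper subgroup of $G$. By Fong-Reynolds there is a unique block $B'$ of $\cO T$ covering $b$ with $(B')^G=B$, and $B'$ is source algebra equivalent---hence basic Morita equivalent---to $B$, with isomorphic defect group. Source algebra equivalences preserve both the fusion system and the K\"ulshammer-Puig class (see~\cite[Sections 8.7, 8.14]{lin2}), and $|T|<|G|$, so the inductive hypothesis applies to $(T,B')$.

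For the failure of (R2) when (R1) holds: suppose $B$ is quasiprimitive but there is $N \lhd G$ such that $B$ covers a nilpotent block $b$ of $\cO N$ with $N \not\leq Z(G)O_p(G)$ or $O_{p'}(N) \not\leq Z(G)\cap G'$. By (R1) the block $b$ is $G$-stable, so the K\"ulshammer-Puig theorem on extensions of nilpotent blocks applies (see~\cite[Section 8.13]{lin2}): writing $P$ for a defect group of $b$, the block $B$ is basic Morita equivalent to a block $\widehat{B}$ of a finite group $\widehat{G}$ of order $|Z|\,|P|\,|G:N|$, where $Z$ is a $p'$-group which---following the untwisting device recalled in Section~\ref{background}---may be chosen inside $Z(\widehat{G})\cap \widehat{G}'$. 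This equivalence preserves the defect group up to isomorphism (one has $\widehat{D}\cap P = P$ and $\widehat{D}/P \cong D/P$), as well as the fusion system and the K\"ulshammer-Puig class, since a covered nilpotent block contributes trivially to the fusion. The numerical point is that $|\widehat{G}| = |Z|\,|P|\,|G:N| \leq |G|$, with equality precisely when $N$ is nilpotent with $O_p(N)=P$ and $O_{p'}(N)$ central, that is, exactly when the offending instance of (R2) does not in fact occur; hence under our hypothesis $|\widehat{G}| < |G|$ and the inductive hypothesis applies to $(\widehat{G},\widehat{B})$.

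The terminal block $(H,C)$ produced by the induction then satisfies (R1) and (R2) by construction---at no point could a further Fong-Reynolds or K\"ulshammer-Puig reduction be applied---and the chain of basic Morita equivalences transports $\mathcal{F}$ and the class in $H^2(\Aut_\mathcal{F}(D),k^\times)$ from $B$ to $C$ unchanged; the condition $O_{p'}(N)\leq Z(H)\cap H'$ in (R2) is exactly what the normalisation $Z\leq Z(\widehat{G})\cap \widehat{G}'$ secures at each step. I expect the main obstacle to be the bookkeeping in the K\"ulshammer-Puig step: verifying carefully that $\widehat{G}$ has strictly smaller order exactly when the instance of (R2) genuinely fails (so that the induction is well-founded), that the equivalence really does preserve the fusion system and the K\"ulshammer-Puig class, and that the central $p'$-part can be kept inside the derived subgroup throughout. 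The Fong-Reynolds step and the assembly of the inductive chain are routine by comparison.
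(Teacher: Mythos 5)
Your two reduction moves are exactly the paper's (a Fong--Reynolds step and a K\"ulshammer--Puig step applied to a stably covered nilpotent block), and your transport of the defect group, the fusion system and the K\"ulshammer--Puig class along the resulting basic Morita equivalences is fine. The genuine gap is the well-foundedness of your induction on $|G|$. In the (KP) step the new group $\widehat{G}$ is a central extension of a group of order $|P|\,[G:N]$ by a $p'$-group $Z$ whose order is the order of the K\"ulshammer--Puig class $\gamma\in H^2(G/N,k^\times)$; that order is governed by the $p'$-part of the Schur multiplier of $G/N$ and by how the covered block sits over $Z(G)\cap G'$, not by $[N:P]$. So $|\widehat{G}|<|G|$ amounts to the assertion that $\gamma$ has order strictly less than $[N:P]$ whenever (R2) fails, and this is never proved. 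Your stated criterion for equality is also false as an equivalence: for $G=C_2\times A_5$, $N=C_2\times 1$ and $p$ odd, $N$ is nilpotent with $O_p(N)=P=1$ and $O_{p'}(N)$ central, yet the extension splits, $\gamma$ is trivial and $|\widehat{G}|=|G|/2$. Conversely, equality really does occur at the boundary of (R2): for $G=SL_2(5)$, $N=Z(G)=C_2\leq G'$ and the faithful covered block, $\gamma$ has order $2=[N:P]$, so $|\widehat{G}|=|G|$. Thus any proof of the strict inequality must genuinely use the failure of (R2) (for instance by bounding the central-extension contribution to $\gamma$ by $|O_{p'}(N)\cap G'|$ and the multiplicity-module contribution by the multiplicity of the nilpotent block); nothing in your write-up supplies such an argument, and without it the induction is not known to terminate.

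The paper sidesteps this entirely by running the same iteration against the quantity $[G:O_{p'}(Z(G))]$ instead of $|G|$. Since the untwisting group $\tilde{Z}$ is a \emph{central $p'$-subgroup} of the new group $\tilde{L}$, it is absorbed into $O_{p'}(Z(\tilde{L}))$ and simply does not count: one gets $[\tilde{L}:O_{p'}(Z(\tilde{L}))]\leq [\tilde{L}:\tilde{Z}]=[G:Z(G)N]\,|D\cap Z(G)N|<[G:O_{p'}(Z(G))]$, no matter how large $\tilde{Z}$ is. (Note also that the paper performs the K\"ulshammer--Puig step relative to $Z(G)N$ rather than $N$ itself; this is part of what makes the displayed inequality work.) So to repair your argument you should either prove the missing bound on the order of the K\"ulshammer--Puig class, or---much more simply---replace induction on $|G|$ by induction on $[G:O_{p'}(Z(G))]$, as the paper does.
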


\begin{proof}
We use two process, labelled (FR) and (KP), which we apply alternately to blocks with defect groups isomorphic to $D$ and their normal subgroups. Each process strictly decreases $[G:O_{p'}(Z(G))]$ when applied nontrivially and so repeated application must come to an end, resulting in a block satisfying (R1) and (R2).

Let $G$ be a finite group and $B$ be a block of $\cO G$ with defect group $D$.

\begin{enumerate}
\item[(FR)] Suppose $N \lhd G$ and $b$ is a block of $\cO N$ covered by $B$. Write $I=I_G(b)$ for the stabilizer of $b$ in $G$, and $B_I$ for the Fong-Reynolds correspondent, which is the unique block of $I$ covering $b$ with Brauer correspondent $B$. Suppose that $I \neq G$. Now by~\cite[Theorem 6.8.13]{lin2} $B_I$ is source algebra equivalent to $B$. Hence by~\cite[Theorem 8.7.1]{lin2} $B_I$ has  defect group $D$ and the same fusion system, and by~\cite[Theorem 8.14.1]{lin2} it has the same K\"ulshammer-Puig class. Since $O_{p'}(Z(G)) \leq O_{p'}(Z(I))$ and $I \neq G$, we have $[I:O_{p'}(Z(I))]<[G:O_{p'}(Z(G))]$. Process (FR) involves replacing $B$ by $B_I$ and repeating the process for all normal subgroups until $B$ is quasiprimitive.

\item[(KP)] Assume that process (FR) has been performed, so that $B$ is quasiprimitive. Let $N \lhd G$ such that $N \not\leq (Z(G) \cap [G,G])O_p(G)$, and suppose that $B$ covers a nilpotent block $b$ of $N$. Let $b'$ be a block of $Z(G)N$ covered by $B$ and covering $b$. Since $B$ is quasiprimitive, both $b$ and $b'$ are $G$-stable. Further $b'$ must also be nilpotent. By~\cite[Theorem 8.12.5]{lin2} and~\cite[Remark 8.12.8]{lin2} $B$ is basic Morita equivalent to a block $\tilde{B}$ of a central extension $\tilde{L}$ of a finite group $L$ by a $p'$-group $\tilde{Z}$ (which we may further assume is contained in the derived subgroup $[\tilde{L},\tilde{L}]$) such that there is $\tilde{M} \lhd \tilde{L}$ with $\tilde{M} \cong D \cap (Z(G)N)$, $G/Z(G)N \cong \tilde{L}/\tilde{Z}\tilde{M}$, and $\tilde{B}$ has defect group isomorphic to $D$, the same fusion system as $B$, and the same K\"ulshammer-Puig class. We have $[\tilde{L}:O_{p'}(Z(\tilde{L}))] \leq |L| = [G:Z(G)N]|D \cap (Z(G)N)| < [G:O_{p'}(Z(G))]$ and $\tilde{M} \leq (Z(\tilde{L}) \cap [\tilde{L},\tilde{L}])O_p(\tilde{L})$. Process (KP) consists of replacing $G$ by $\tilde{L}$ and $B$ by $\tilde{B}$.
\end{enumerate}

Repeated application of (FR) and (KP) to all blocks of all normal subgroups must final result in a block to which we may no longer apply (FR) or (KP), that is, a block $C$ satisfying (R1) and (R2). 
\end{proof}

Call a pair $(G,B)$ satisfying conditions (R1)-(R2) of the proposition \emph{reduced}. If the group $G$ is clear, then just say that $B$ is reduced. 

Reduced blocks with defect group $D \cong p_+^{1+2}$ have a very restricted structure:

\begin{proposition}
\label{reduced_classification:prop}
Let $(G,B)$ be a reduced pair where $D$ has defect group $D \cong p_+^{1+2}$ with $p \geq 3$. 
Then either: (i) $B$ is a block of maximal defect and one of the following occurs:
\begin{enumerate}
\item[(S1)] $D \lhd G$;
\item[(S2)] $G \cong (C_p \times C_p) \rtimes H$, where $SL_2(p) \leq  H \leq GL_2(p)$;
\item[(S3)] $G$ satisfies $PSL_3(p) \leq G/Z(G) \leq \Aut(PSL_3(p))$, where $Z(G) \leq G'$;
\item[(S4)] $G$ satisfies $PSU_3(p) \leq G/Z(G) \leq \Aut(PSU_3(p))$, where $Z(G) \leq G'$;
\item[(S5)] $p=3$ and $G/Z(G)$ is one of $M_{12}$, $M_{12}.2$, $M_{24}$, $J_2$, $J_2.2$, $J_4$, $He$, $He.2$ or $Ru$;
\item[(S6)] $p=5$ and $G/Z(G)$ is one of $2.HS$, $2.HS.2$, $3.McL$, $McL.2$, $2.Ru$, $Co_3$, $Co_2$ or $Th$;
\item[(S7)] $p=11$ and $G=J_4$, and $B$ is the principal block;
\item[(S8)] $p=13$ and $G$ is the Fischer-Griess Monster $M$, and $B$ is the principal block;
\end{enumerate}

or (ii) $p=3$ and one of the following occurs:

\begin{enumerate}
\item[(S9)] $G$ is $Co_1$ or the Fischer-Griess Monster $M$;
\item[(S10)] $G$ satisfies $PSL_{3\delta}(q) \leq G/Z(G) \leq \Aut(PSL_{3\delta}(q))$ with $|Z(G)|_3=3$, where $3 \parallel q-1$ and $Z(G) \leq G'$;
\item[(S11)] $G$ satisfies $PSU_{3\delta}(q) \leq G/Z(G) \leq \Aut(PSU_{3\delta}(q))$ with $|Z(G)|_3=3$, where $3 \parallel q+1$ and $Z(G) \leq G'$;
\item[(S12)] $G$ satisfies $G_2(q) \leq G \leq \Aut(G_2(q))$ for some $q$ not divisible by $3$, and $B$ is the principal block;
\item[(S13)] $G$ satisfies ${}^2F_4(2^{2m+1}) \leq G \leq \Aut({}^2F_4(2^{2m+1}))$ for some $m \in \mathbb{N}$, and $B$ is the principal block.
\end{enumerate} 
\end{proposition}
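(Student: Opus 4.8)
The plan is to determine the generalised Fitting subgroup $F^*(G)=F(G)E(G)$ and then read off the possibilities for $G$ from its components together with $O_p(G)$. First I would record the consequences of the reduction hypotheses (R1) and (R2) of Proposition \ref{reduce:prop}. Applying (R2) to $O_{p'}(G)$, whose blocks all have defect zero and are therefore nilpotent, gives $O_{p'}(G)\leq Z(G)\cap G'$; in particular the $p'$-part of $F(G)=O_p(G)O_{p'}(G)$ is central. Since $C_G(F^*(G))\leq F^*(G)$, the analysis splits according to whether the layer $E(G)$ is trivial.

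If $E(G)=1$ then $F^*(G)=O_p(G)Z(G)$ and $C_G(O_p(G))\leq O_p(G)Z(G)$, so $G/O_p(G)Z(G)$ embeds in $\Aut(O_p(G))$, where $O_p(G)\leq D$. I would pass to $G/O_{p'}(Z(G))$, which does not change the isomorphism type of the defect group since we factor out a central $p'$-subgroup, to reach the $p$-constrained situation $C_G(O_p(G))\leq O_p(G)$. A short argument then shows $O_p(G)\in\{C_p\times C_p,\,p_+^{1+2}\}$: when $|O_p(G)|\leq p$ the quotient $G/O_p(G)Z(G)$ is a $p'$-group, forcing $|G|_p\leq p<p^3$, which is excluded. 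The case $O_p(G)=D$ is (S1), while $O_p(G)=C_p\times C_p$ gives $|G|_p=p^3$ with $C_G(O_p(G))\leq O_p(G)$, so Lemma \ref{GL_2(p)splitting:lemma} yields $G=(C_p\times C_p)\rtimes H$ with $SL_2(p)\leq H\leq GL_2(p)$, which is (S2).

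Now suppose $E(G)\neq 1$, with components $L_1,\dots,L_t$. Each $L_i$ is perfect and nonabelian, hence not contained in $Z(G)O_p(G)$, so by (R2) the block of $L_i$ covered by $B$ is non-nilpotent and has nontrivial defect group $D_i\leq D$. The covered block of $E(G)$ has defect group $D\cap E(G)$, the central product of the $D_i$, which lies inside $D$ of order $p^3$; this bounds $t$ and the $D_i$. As a central product of abelian groups is abelian while $D=p_+^{1+2}$ is not, there are two possibilities. In the first, a single component $L:=L_1$ carries the whole defect group, $D_1\cong p_+^{1+2}$ and $D\leq L$; using that $C_G(L)$ is then a $p'$-group contained in $Z(G)$, the group $G/Z(G)$ is almost simple with socle $L/Z(L)$, and I would invoke the classification of blocks of quasisimple groups with extraspecial defect group in~\cite{ae11}. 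This produces the almost simple possibilities and, after checking whether the defect group is Sylow, sorts them into the maximal-defect cases (S3), (S4) (the defining-characteristic groups $PSL_3(p)$, $PSU_3(p)$) and the sporadic cases (S5)--(S8), together with the $p=3$-specific cases (S9)--(S13); the constraints $Z(G)\leq G'$ and those on $|Z(G)|_p$ come from tracking the central extension through~\cite{ae11}.

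The remaining, and hardest, possibility is that every component has abelian defect group, so that $D\cap E(G)$ is abelian of order $p$ or $p^2$ and $D$ is completed to $p_+^{1+2}$ by outer automorphisms of order $p$. Here I would first rule out component-permuting automorphisms: an outer automorphism of order $p$ permuting the $L_i$ nontrivially would require at least $p\geq 3$ components, forcing $|D\cap E(G)|\cdot p\geq p^{p+1}>p^3$. Hence the order-$p$ automorphisms stabilise each component and act as diagonal, field or graph automorphisms. The crux is then to show that extending a $C_p$- or $(C_p\times C_p)$-defect block of a quasisimple group by such an automorphism never produces an extraspecial defect group. For components of Lie type in characteristic different from $p$ this is exactly the content of Sections \ref{cgps} and \ref{egps}: Propositions \ref{Prop:slu}, \ref{Prop:spomega} and \ref{Prop:excep} force $a=1$, and hence no field automorphism of order $p$, outside a short list of $p=3$ configurations, while Proposition \ref{Prop:spomega}(c)(ii) and Lemma \ref{Lem:stabfieldautp} show that in the surviving configurations the extended block has abelian defect group of order $p^2$ or $p^3$, never $p_+^{1+2}$. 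For components that are alternating, sporadic, or of Lie type in the defining characteristic, a direct appeal to the known $C_p$- and $(C_p\times C_p)$-defect blocks and the action of their outer automorphisms excludes the extraspecial extension. I expect this last step to be the main obstacle, since it requires a uniform argument across all families of quasisimple groups; it is precisely to make it go through that the technical results of Sections \ref{cgps} and \ref{egps} were established. Collecting the surviving cases, and separating the maximal-defect cases (S1)--(S8) from the $p=3$-specific cases (S9)--(S13), completes the proof.
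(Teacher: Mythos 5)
Your strategy coincides with the paper's: analyse $F^*(G)$, settle the $E(G)=1$ cases with Lemmas \ref{subs_PSL2:lemma} and \ref{GL_2(p)splitting:lemma}, quote \cite{ae11} when $D$ lies in a component, and use Propositions \ref{Prop:slu}, \ref{Prop:spomega}, \ref{Prop:excep} and Lemma \ref{Lem:stabfieldautp} to exclude order-$p$ outer automorphisms over a component with defect group $C_p$ or $C_p\times C_p$. There is, however, a genuine gap in your derivation of (S2). You ``pass to $G/O_{p'}(Z(G))$'' and apply Lemma \ref{GL_2(p)splitting:lemma} there, but (S2) is a statement about $G$ itself: it asserts $G\cong (C_p\times C_p)\rtimes H$ with no central $p'$-part, in deliberate contrast to (S3), (S4), (S10), (S11), which allow a centre inside $G'$. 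Your argument only classifies the central quotient, so it leaves open the possibility of reduced blocks whose group is a nonsplit central $p'$-extension of $(C_p\times C_p)\rtimes H$; these would be additional cases in the proposition and potentially additional Morita classes in Theorem \ref{main_theorem}. The quotient step is also not block-theoretically innocent: (R2) retains $O_{p'}(G)$ inside $Z(G)\cap G'$ precisely because such subgroups carry the K\"ulshammer-Puig data, and $B$ need not be a block of $G/O_{p'}(Z(G))$ at all. What is missing is the paper's closing argument for this case: by (R2), $O_{p'}(G)\leq Z(G)\cap [G,G]$, and the Schur multiplier of $(C_p\times C_p)\rtimes H$ for $SL_2(p)\leq H\leq GL_2(p)$ has trivial $p'$-part, whence $O_{p'}(G)=1$, $C_G(O_p(G))=O_p(G)$, and Lemma \ref{GL_2(p)splitting:lemma} applies to $G$ itself.

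A secondary, sketch-level issue: in your final case you still allow two components, each with abelian defect group, and then argue about order-$p$ automorphisms one component at a time. The paper eliminates $t=2$ outright before any case division: if $|D\cap E(G)|=p^2$ then $D\cap L_1$ and $D\cap L_2$ would be distinct normal subgroups of $D$ of order $p$, impossible in $p_+^{1+2}$ since its only normal subgroup of order $p$ is $Z(D)$; and $D\leq E(G)$ would force $D$ abelian. Without this, your component-wise reduction does not obviously account for the interaction of two components (and you never locate $O_p(G)$ relative to the component, where the paper shows $O_p(G)\leq L_1$, again via the uniqueness of the order-$p$ normal subgroup of $D$). These points are repairable with the paper's arguments, but as written they are missing from the proposal.
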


\begin{proof}
Consider first $O_p(G)$, which is a subgroup of $D$. We may suppose that $O_p(G) \neq D$ as otherwise we are in case (S1).

Suppose that $|O_p(G)|=p^2$. Now the unique block of $\cO C_G(O_p(G))$ covered by $B$ has central defect group $D \cap C_G(O_p(G)) = O_p(G)$. Hence it is nilpotent and by (R2) $C_G(O_p(G)) = O_p(G)Z(G)$. Hence $B$ has maximal defect. 
Now $O_p(G) \cong C_p \times C_p$, so $G/C_G(O_p(G))$ is isomorphic to a subgroup of $GL_2(p)$ containing a Sylow $p$-subgroup, acting on $O_p(G)$ by the natural representation. Since we do not have $D \lhd G$, by Lemma \ref{subs_PSL2:lemma} this means $SL_2(p) \leq G/C_G(O_p(G)) \leq GL_2(p)$. Since $B$ is reduced we have $O_{p'}(G) \leq Z(G) \cap [G,G]$. The Schur multiplier of $G/O_{p'}(G)$ has trivial $p'$-part, so $O_{p'}(G)=1$ and $C_G(O_p(G))=O_p(G)$. Hence by Lemma \ref{GL_2(p)splitting:lemma} we are in case (S2).

We assume from now on that either $|O_p(G)|=p$, in which case $O_p(G) = Z(D)$, being the unique normal subgroup of $D$ of order $p$, or $O_p(G)=1$. 

Now consider $E(G)$, the layer of $G$, which is the product of the components of $G$. The generalized Fitting subgroup is $F^*(G)=E(G)F(G)$, where $F(G)$ is the Fitting subgroup, the product of the groups $O_{l}(G)$ for primes $l$. See~\cite{asc00} for details. For reduced pairs we have $F(G)=O_p(G)Z(G)$. Note that by~\cite[31.13]{asc00} $C_G(F^*(G)) \leq F^*(G)$. If $E(G)=1$, then $G/Z(G)O_p(G)$ is isomorphic to a subgroup of the $p'$-group $\Aut(C_p)$, so that $O_p(G) \in \Syl_p(G)$, contradicting our assumption that $O_p(G) \neq D$. Hence $E(G) \neq 1$. Write $E(G)=L_1 \circ \cdots \circ L_t$, where the $L_i$ are the components of $G$. Since the $p$-rank of a quotient of a subgroup of $D$ is at most two, it follows from (R2) that $t \leq 2$ (see for example the proof of~\cite[Theorem 3.2]{el18} where a similar argument is given in full detail). 

Suppose that $t=2$, so that $|D \cap E(G)| \geq p^2$. Note that $D$ stabilizes $L_1$ and $L_2$. We cannot have $|D \cap E(G)|=p^2$, since then $D$ would have distinct normal subgroups $L_1 \cap D$ and $L_2 \cap D$ of order $p$, which cannot happen in $p^{1+2}$. Hence $D \leq E(G)$, $O_p(G) \leq Z(E(G))$ and $|L_i \cap D|=p^2$ for each $i$. However this would mean that $D$ is abelian, a contradiction. Hence $t=1$, so $L_1/Z(L_1) \leq G/Z(G)O_p(G) \leq \Aut(L_1/Z(L_1))$.

When $D \leq L_1$, we apply~\cite{ae11} as follows. If $L_1/Z(L_1)$ is sporadic, then it follows from~\cite[Table 1]{ae11} that $G$ is as in (S5)-(S9). By Section 2 of~\cite{ae11} there are no blocks of double covers of alternating groups with defect group $D$. If $L_1$ is of Lie type (in defining or non-defining characteristic), then~\cite[Theorem 4.5]{ae11} (for classical groups), \cite[Theorem 5.1]{ae11} (for exceptional groups) and the discussion following the proof of~\cite[Theorem 5.1]{ae11} (for exceptional covers of simple groups) tells us that we are in one of cases (S3), (S4) or (S10)-(S13). 

Hence we may suppose that $D \cap L_1$ has order dividing $p^2$. Note that we must have $O_p(G) \leq L_1$, otherwise $D$ has distinct normal subgroups $O_p(G)$ and $D \cap L_1$, a contradiction. We must have that $\Out(L_1/Z(L_1))$ has order divisible by $p$, otherwise $[G:L_1]$ would have order prime to $p$, contradicting our assumption that $[D:D \cap L_1]=p$ or $p^2$.

If $L_1$ is a finite group of Lie type defined over a field of characteristic $p$, then $D \cap L_1 \in \Syl_p(L_1)$, so that $L_1$ is $PSL_2(p^2)$ or $SL_2(p^2)$. But then $\Out(PSL_2(p^2))$ has order prime to $p$, a contradiction. If $L_1$ is a sporadic or alternating group, then we have that $\Out(L_1/Z(L_1))$ has order $1$ or $2$, a contradiction.

Suppose that $L_1$ is a finite group of Lie type defined over a field of characteristic different to $p$. We consider the structure of the outer automorphism group, found in~\cite[Table 5]{atlas}. 

From Proposition \ref{Prop:spomega}, if $L_1$ is classical but not special linear, unitary or type $D_4$, then it has outer automorphism group of order prime to $p$, so that $G/L_1$ is a $p'$-group, a contradiction. If $L_1$ is of type $D_4$, then Proposition \ref{Prop:spomega} (c) tells us that $D$ is not a defect group of $G$.

Suppose $L_1$ is special linear or unitary.  Since the symmetry of the Dynkin diagram has order at most $2$ and the diagonal automorphisms contribute a normal cyclic subgroup to the outer automorphism group, it follows that $G$ has a normal subgroup $N$ such that $[N{:}L_1]\mid p$ and $N$ acts on $L_1$ as inner-diagonal automorphisms. By Proposition \ref{Prop:slu} either there is no field automorphism of order $p$ or $D\cap N$ is cyclic. In the former case, $N$ has a $p'$-index in $G$, so that $D \leq N$. But~\cite[Proposition 4.1]{ae11} tells us that (since $D \cap L_1$ is abelian) $D$ must be abelian, a contradiction. In the later case, 
$G/N$ has a cyclic Sylow $p$-subgroup. Since $D\cap N$ is cyclic and normal in $D$,
it follows that $D\cap N=Z(D)$ and $D/(D\cap N)\cong C_p\times C_p$, which is impossible. 

Suppose that $L_1$ is of exceptional type. By Proposition \ref{Prop:excep} unless $L_1$ is as in (b) or (c) of Proposition \ref{Prop:excep}, there are no field automorphisms of order divisible by $p$, and further the outer automorphism group of $L_1$ is prime to $p$, a contradiction. 

Finally suppose that $L_1$ is as in one of the cases within (b) and (c) of Proposition \ref{Prop:excep}. Note that in all these cases $\Out(L_1)$ has a normal Sylow $p$-subgroup, which must necessarily have order $3$ since $B$ is reduced. Let $H$ be the preimage of $O_p(G/L_1)$ in $G$. Then $D \leq H$ and $D$ is abelian by Lemma \ref{Lem:stabfieldautp}, a contradiction.
\end{proof}

{\sc Proof of Theorem}~\ref{main_theorem}
Let $G$ be a finite group and $B$ a block of $\cO G$ with defect group $D \cong p_+^{1+2}$ where $p \geq 5$. By Proposition \ref{reduce:prop} we may assume that $(G,B)$ is reduced. By Proposition \ref{reduced_classification:prop} and Section \ref{background} $B$ is one of the blocks listed.
\hfill $\Box$

\bigskip

{\sc Proof of Theorem}~\ref{p=3reduce:theorem}
By Proposition \ref{reduce:prop}, it suffices to show that there are only finitely many Morita equivalence classes amongst the reduced blocks with defect group $3_+^{1+2}$. By~\cite[Corollary 4.18]{ei21}, it further suffices to show that there are only finitely many Morita equivalence classes amongst the unique blocks of $\cO \langle D^g:g \in G \rangle$ covered by reduced blocks $B$. Examining the list given in Proposition \ref{reduced_classification:prop} this means that we may assume one of the following:

(i) $G=D$;

(ii) $G \cong (C_3 \times C_3) \rtimes SL_2(3)$;

(iii) $G$ is quasisimple.

In cases (i) and (ii) there are clearly only finitely many possible Morita equivalence classes, so it suffices to assume that $G$ is quasisimple.

Following~\cite{eel20}, the $\cO$-Morita Frobenius number ${\rm mf}_\cO(B)$ of a block $B$ is defined to be the smallest integer $n$ such that $B^{(p^n)}$ is Morita equivalent to $B$, where $B^{(p^n)}$ is defined as follows. Define a ring automorphism $\bar{\sigma}:kG \rightarrow kG$ by $\bar{\sigma}(\sum_{g \in G} a_g g) = \sum_{g \in G} (a_g)^p g$. Write $B^{(p^n)}$ for the lift of the image of $k \otimes B$ under $\bar{\sigma}^n$. By~\cite[Corollary 3.11]{eel20}, in order to show that there are only finitely many Morita equivalence classes amongst blocks of quasisimple groups with defect group $3_+^{1+2}$, it suffices to show that there integers $c$ and $m$ such that for all blocks $B$ of quasisimple groups with defect group $3_+^{1+2}$ we have ${\rm c}(B) \leq c$ and ${\rm mf}_\cO (B) \leq m$. By~\cite{fk19} we may take $m=4$, and we are done.


\section{Blocks with defect group $5_+^{1+2}$}
\label{p=5}

We apply Theorem \ref{main_theorem} in the case $p=5$, and classify the Morita equivalence classes. Note that it may in theory be possible that these Morita equivalence classes contain blocks with defect group $5_-^{1+2}$, although we do not know of such an example.

Referring to Section \ref{background}, the possible inertial quotients correspondent to conjugacy classes of subgroups of $GL_2(5)$ of order prime $5$, of which there are thirty, obtained by a routine calculation in MAGMA~\cite{magma}. Each block with normal defect group is Morita equivalent to a block of $5_+^{1+2} \rtimes \hat{E}$ where $E$ is the inertial quotient and $\hat{E}$ is a central extension of $E$ by $Z \leq Z(\hat{E}) \cap \hat{E}'$ acting trivially on $5_+^{1+2}$. In order to describe all blocks with normal defect group, for each $E$ we choose a central extension with $Z \cong H^2(E,k^\times)$, which we obtain via the pMultiplicator function in MAGMA. To differentiate between different actions of isomorphic groups $E$ on $D=5_+^{1+2}$ we use the notation $(E)_i$. In our case all $D \rtimes E$ where there is such an ambiguity have order 2000 or less, and so we specify the action by identifying $D \rtimes E$ in the SmallGroup library of MAGMA (or equivalently GAP~\cite{gap_SG}). Table \ref{Order125_normal} gives each of the thirty possibilities, along with: SmallGroup representation of $5_+^{1+2} \rtimes E$ (when this is at most 2000); the isomorphism type of $H^2(E,k^\times)$; a candidate for $\hat{E}$ given by its SmallGroup labelling; and the number of Morita equivalence classes amongst the blocks of $5_+^{1+2} \rtimes \hat{E}$. Note that when $|H^2(E,k^\times)|=2$ it is easy to verify that the two blocks of $5_+^{1+2} \rtimes \hat{E}$ are in distinct Morita equivalence classes by checking, using the given SmallGroup representation of $\hat{E}$, that the number of simple modules in each block is different. There are two cases, treated in Lemma \ref{C4xC4} and \ref{C4circD8}, that have larger Schur multipliers and require us to establish Morita equivalences between nonprincipal blocks.

In Table \ref{Order125_blocks_normal} we give the numbers of irreducible characters of height zero $k_0(B)$, height one $k_1(B)$, and the number of irreducible Brauer characters $l(B)$ for each Morita equivalence class of blocks with normal defect group. Recall that the height of an irreducible character $\chi$ in a block with defect group $D$ is the non-negative integer $h$ such that $\chi(1)_p=p^h[G:D]_p$. 

We now make some comments on the calculation of $k_0(B)$, $k_1(B)$ and $l(B)$ in Table \ref{Order125_blocks_normal}. Where $|D \rtimes \hat{E}| \leq 2000$ we used a SmallGroup representation of the group and computed character tables using GAP. Otherwise, using GAP we have formed the holomorph of $5_+^{1+2}$ and found $D \rtimes E$ through a search of its subgroups. We computed the inertia subgroups of the nontrivial irreducible characters of $D$ in $E$, which in all cases were groups with trivial Schur multiplier. From this we could deduce the number of irreducible characters in each block.  

\begin{table}
\centering
\begin{tabular}{|l|l|l|l|l|l|}
\hline
$E$ & $D \rtimes E$ as & $H^2(E,k^\times)$ & $\hat{E}$ as & $\#$Morita eq. & Notes \\
 & SmallGroup & & SmallGroup & classes & \\
\hline
$1$ & (125,3) & 1 & (1,1) & 1 & \\
$(C_2)_1$ & (250,5) & 1 & (10,2) & 1 & \\
$(C_2)_2$ & (250,8) & 1 & (2,1) & 1 & \\
$C_3$ & (375,2) & 1 & (3,1) & 1 & \\
$(C_4)_1$ & (500,17) & 1 & (4,1) & 1 & \\
$(C_4)_2$ & (500,21) & 1 & (4,1) & 1 & \\
$(C_4)_3$ & (500,23) & 1 & (4,1) & 1 & \\
$(C_4)_4$ & (500,25) & 1 & (4,1) & 1 & \\
$C_2 \times C_2$ & (500,27) & $C_2$ & (8,4) & 2 & \\
$C_6$ & (750,6) & 1 & (6,2) & 1 & \\
$S_3$ & (750,5) & 1 & (6,1) & 1 & \\
$C_8$ & (1000,86) & 1 & (8,1) & 1 & \\
$(C_4 \times C_2)_1$ & (1000,89) & $C_2$ & (16,6) & 2 & \\
$(C_4 \times C_2)_2$ & (1000,91) & $C_2$ & (16,6) & 2 & \\
$D_8$ & (1000,92) & $C_2$ & (16,7) & 2 & \\
$Q_8$ & (1000,93) & 1 & (8,4) & 1 & \\
$C_3 \rtimes C_4$ & (1500,35) & 1 & (12,1) & 1 & \\
$C_{12}$ & (1500,36) & 1 & (12,2) & 1 & \\
$D_{12}$ & (1500,37) & $C_2$ & (24,6) & 2 & \\
$C_4 \times C_4$ & (2000,473) & $C_4$ & (64,18) & 3 & Lemma \ref{C4xC4} \\
$M_4(2)$ & (2000,474) & 1 & (16,6) & 1 & \\
$C_4 \circ D_8$ & (2000,475) & $C_2 \times C_2$ & (64,74) & 2 & Lemma \ref{C4circD8} \\
$C_3 \rtimes C_8$ & N/A & 1 & (24,1) & 1 & \\
$C_{24}$ & N/A & 1 & (24,2) & 1 & \\
$SL_2(3)$ & N/A & 1 & (24,3) & 1 & \\
$C_4 \times S_3$ & N/A & $C_2$ & (48,5) & 2 & \\
$C_4 \wr C_2$ & N/A & $C_2$ & (64,10) & 2 & \\
$C_8 \rtimes S_3$ & N/A & 1 & (48,5) & 1 & \\
$C_4 \circ SL_2(3)$ & N/A & 1 & (48,33) & 1 & \\
$U_2(3)$ & N/A & 1 & (96,67) & 1 & \\
\hline
\end{tabular}
\caption{Inertial quotients for blocks with defect group $D \cong 5_+^{1+2}$}
\label{Order125_normal}
\end{table}

\begin{table}
\centering
\begin{tabular}{|l|l|l|l|}
\hline
$E$ & $k_0(B)$ & $k_1(B)$ & $l(B)$ \\
\hline
$1$ & 20 & 4 & 1 \\
$(C_2)_1$ & 20 & 2 & 2 \\
$(C_2)_2$ & 14 & 8 & 2 \\
$C_3$ & 11 & 12 & 3 \\
$(C_4)_1$ & 25 & 1 & 4 \\
$(C_4)_2$ & 13 & 1 & 4 \\
$(C_4)_3$ & 10 & 4 & 4 \\
$(C_4)_4$ & 10 & 16 & 4 \\
$C_2 \times C_2$ & 16,13 & 4,4 & 4,1 \\
$C_6$ & 10 & 24 & 6 \\
$S_3$ & 13 & 6 & 3 \\
$C_8$ & 11 & 2 & 8 \\
$(C_4 \times C_2)_1$ & 20,14 & 2,2 & 8,2 \\
$(C_4 \times C_2)_2$ & 14,8 & 8,8 & 8,2 \\
$D_8$ & 14,11 & 8,8 & 5,2 \\
$Q_8$ & 8 & 20 & 5 \\
$C_3 \rtimes C_4$ & 8 & 24 & 6 \\
$C_{12}$ & 14 & 12 & 12 \\
$D_{12}$ & 14,11 & 12,12 & 6,3 \\
$C_4 \times C_4$ & 25,10,13 & 4,4,4 & 16,1,4 \\
$M_4(2)$ & 13 & 4 & 10 \\
$C_4 \circ D_8$ & 16,10 & 10,10 & 10,4 \\
$C_3 \rtimes C_8$ & 13 & 6 & 12 \\
$C_{24}$ & 25 & 6 & 24 \\
$SL_2(3)$ & 8 & 28 & 7 \\
$C_4 \times S_3$ & 16,10 & 12,12 & 12,6 \\
$C_4 \wr C_2$ & 20,11 & 5,5 & 14,5 \\
$C_8 \rtimes S_3$ & 20 & 6 & 18 \\
$C_4 \circ SL_2(3)$ & 16 & 14 & 14 \\
$U_2(3)$ & 20 & 7 & 16 \\

\hline
\end{tabular}
\caption{Morita equivalence classes of blocks with defect group $5_+^{1+2}$: blocks of groups with a normal defect group}
\label{Order125_blocks_normal}
\end{table}

\begin{lemma}
\label{C4xC4}
There are precisely three Morita equivalence classes of blocks with normal defect group $D \cong 5_+^{1+2}$ and inertial quotient $C_4 \times C_4$.
\end{lemma}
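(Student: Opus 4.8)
The plan is to combine the general theory of blocks with normal defect group recalled in Section~\ref{background} with the action of a Weyl-group element of $GL_2(5)$ on the Schur multiplier of $E$. By~\cite[Theorem 6.14.1]{lin2} and the ensuing discussion, any block $B$ with normal defect group $D\cong 5_+^{1+2}$ and inertial quotient $E\cong C_4\times C_4$ is basic Morita equivalent to one of the blocks of $\cO(D\rtimes\hat E)$, where $\hat E$ is the central extension $(64,18)$ of $E$ by $Z\cong H^2(E,k^\times)\cong C_4$ with $Z$ acting trivially on $D$. These blocks are parametrised by $\Irr(Z)\cong C_4$, and under this bijection the block labelled by $\lambda\in\Irr(Z)$ realises the twisted group algebra $\cO_{\alpha_\lambda}(D\rtimes E)$, with $\alpha_\lambda$ running over all of $H^2(E,k^\times)=\langle\zeta\rangle\cong C_4$ as $\lambda$ runs over $\Irr(Z)$. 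Thus there are \emph{a priori} at most four Morita equivalence classes, indexed by $\alpha\in\{1,\zeta,\zeta^2,\zeta^3\}$, and the task is to merge exactly one pair.

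Next I would exploit the symmetry coming from $\Out(D)=GL_2(5)$. Every element of a $C_4\times C_4\leq GL_2(5)$ is semisimple with eigenvalues in $\bF_5$ (as $\bF_5^\times\cong C_4$ already contains all fourth roots of unity), so such a subgroup is $GL_2(5)$-conjugate into the split diagonal torus $T\cong\bF_5^\times\times\bF_5^\times\cong C_4\times C_4$; hence I may take $E=T$. Its normaliser is $N_{GL_2(5)}(T)=T\rtimes\langle s\rangle$, where $s$ is the Weyl involution interchanging the two diagonal entries. Since $E$ and $\langle E,s\rangle$ are $5'$-groups, the projection $\Aut(D)\to\Out(D)=GL_2(5)$ splits over them; moreover $s$ lifts to an automorphism of the Schur cover $\hat E$ acting on $Z$ by the dual of its action on $H^2(E,k^\times)$. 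Combining these two lifts produces an automorphism $\tau$ of $D\rtimes\hat E$ fixing $D$ and inducing the coordinate swap on $E$. This $\tau$ permutes the blocks of $\cO(D\rtimes\hat E)$ according to the action of $s$ on $\Irr(Z)$, identifying $\cO_\alpha(D\rtimes E)$ isomorphically with $\cO_{s\cdot\alpha}(D\rtimes E)$; in particular the paired blocks are Morita equivalent. It therefore remains only to determine the action of $s$ on $H^2(E,k^\times)\cong C_4$.

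The key computation — and the step I expect to require the most care — is that $s$ acts on $H^2(E,k^\times)\cong C_4$ by inversion. Writing $E=A\times A$ with $A=C_4$, one has $H^2(A\times A,k^\times)\cong A\otimes_{\Z}A\cong C_4$, with the generating class represented by the antisymmetric commutator pairing $B\bigl((a_1,b_1),(a_2,b_2)\bigr)=\langle a_1,b_2\rangle\,\langle a_2,b_1\rangle^{-1}$ for a fixed symmetric perfect pairing $\langle\,,\,\rangle\colon A\times A\to k^\times$. Replacing each argument by its $s$-image and using symmetry of $\langle\,,\,\rangle$ gives $s^\ast B=B^{-1}$, so $s$ inverts $\langle\zeta\rangle$. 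Hence $\zeta$ and $\zeta^3=\zeta^{-1}$ form a single $s$-orbit while $1$ and $\zeta^2$ are fixed, leaving at most the three classes $\{1\}$, $\{\zeta^2\}$ and $\{\zeta,\zeta^3\}$. Finally I would separate these by the Morita invariant $l(B)$: using the SmallGroup presentation $(64,18)$ of $\hat E$ in GAP~\cite{gap_SG} one finds $l(B)=16,4,1$ for $\alpha$ of order $1,2,4$ respectively (recorded in Table~\ref{Order125_blocks_normal}), and as these values are pairwise distinct the three classes are genuinely different. This establishes exactly three Morita equivalence classes; everything outside the multiplier computation is either general block theory or a finite machine calculation.
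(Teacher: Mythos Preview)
Your argument is correct and follows the same overall architecture as the paper's proof: parametrise the candidate blocks by $\Irr(Z)\cong C_4$, separate three of them via the values $l(B)=16,4,1$ computed in GAP, and merge the two blocks corresponding to the faithful characters of $Z$. The only genuine difference lies in how the merging is carried out. The paper simply exhibits an explicit overgroup $H\cong (C_2)^3.D_{16}$ (SmallGroup$(128,71)$) with $\hat E\lhd H$ and $H$ acting on $D$, and observes computationally that conjugation in $D\rtimes H$ interchanges $\lambda$ and $\lambda^3$; your route is more conceptual, identifying the relevant symmetry as the Weyl involution $s\in N_{GL_2(5)}(T)$, lifting it to an automorphism of $D\rtimes\hat E$ via Schur--Zassenhaus and the lifting property of Schur covers, and then computing by hand that $s$ inverts the generator of $H^2(C_4\times C_4,k^\times)\cong C_4$ because the universal $2$-cocycle is antisymmetric in the two $C_4$ factors. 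Your approach has the advantage of explaining \emph{why} the two faithful twists coincide and would generalise more readily; the paper's approach has the advantage of avoiding any cohomological bookkeeping in favour of a single machine check.
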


\begin{proof}
We may choose a central extension $\hat{E}$ of $C_4 \times C_4$ by $Z \cong C_4$ identified as SmallGroup(64,18). Write $\Irr(Z) = \{1_Z=\lambda^0,\lambda, \lambda^2,\lambda^3\}$, so that $\lambda^2$ is non-faithful, and write $B_{i}$ for the block of $D \rtimes \hat{E}$ covering $\lambda^i \in \Irr(Z)$. Calculations using GAP give us that $l(B_0)= 16$, $l(B_1)=l(B_{3})=1$ and $l(B_2)=4$, so that $B_0$, $B_1$ and $B_2$ lie in different Morita equivalence classes. Now we may embed $D \rtimes \hat{E}$ in $D \rtimes H$, where $H \cong (C_2)^3.D_{16}$, identified as SmallGroup(128,71), in which $\lambda$ and $\lambda^3$ are permuted. Hence $B_1$ and $B_3$ are Morita equivalent.
\end{proof} 

\begin{lemma}
\label{C4circD8}
There are precisely two Morita equivalence classes of blocks with normal defect group $D \cong 5_+^{1+2}$ and inertial quotient $C_4 \circ D_8$.
\end{lemma}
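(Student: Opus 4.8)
The plan is to mirror the strategy of Lemma~\ref{C4xC4}, but now with the richer Schur multiplier $H^2(C_4 \circ D_8, k^\times) \cong C_2 \times C_2$, which by the discussion in Section~\ref{background} governs the blocks of $D \rtimes \hat{E}$ through the characters of the central $p'$-group $Z$. First I would fix the central extension $\hat{E}$ identified as SmallGroup(64,74), so that $Z \cong C_2 \times C_2$ lies in $Z(\hat{E}) \cap \hat{E}'$ and acts trivially on $D$. Writing $\Irr(Z) = \{1_Z, \mu_1, \mu_2, \mu_3\}$, where $\mu_1, \mu_2, \mu_3$ are the three nontrivial (linear) characters, the blocks of $D \rtimes \hat{E}$ are $B_0, B_1, B_2, B_3$, with $B_i$ covering the character indexed by $i$. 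Since each choice of $\alpha \in H^2(E,k^\times)$ corresponds to one of these $Z$-characters, there are a priori four blocks to sort into Morita classes.

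Next I would separate the principal block $B_0$ from the rest by a numerical invariant: a GAP computation using the explicit SmallGroup presentation of $\hat{E}$ gives $l(B_0)=10$, whereas the three faithful-on-some-factor blocks have the smaller value $l(B_i)=4$ for $i=1,2,3$ (matching the entries $10,4$ recorded in Table~\ref{Order125_blocks_normal}). This immediately shows $B_0$ is in a Morita equivalence class of its own, so the only remaining question is whether $B_1, B_2, B_3$ all coincide up to Morita equivalence. I would establish this by finding an overgroup in which the three nontrivial characters of $Z$ are fused. Concretely, I would embed $D \rtimes \hat{E}$ into a larger group $D \rtimes H$ where $H$ is a suitable extension in which $\Out(\hat{E})$ induces a transitive (or at least $S_3$-like) action on $\{\mu_1,\mu_2,\mu_3\}$; conjugation by such automorphisms then permutes $B_1, B_2, B_3$ and hence shows they are Morita (indeed basic Morita) equivalent, exactly as conjugation permuted $B_1$ and $B_3$ in Lemma~\ref{C4xC4}.

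The main obstacle is locating and verifying the correct fusing overgroup $H$: one must exhibit an explicit finite group containing $D \rtimes \hat{E}$ in which the requisite automorphisms of $\hat{E}$ permuting the three nontrivial characters of $Z$ are realised by conjugation, while still acting compatibly on $D$. Since $Z \cong C_2 \times C_2$ here (rather than the cyclic $C_4$ of Lemma~\ref{C4xC4}), its automorphism group is $GL_2(2) \cong S_3$, so in principle all three nontrivial characters can be permuted, but one must check that this symmetry is actually induced inside a genuine finite group rather than merely at the level of $Z$. I expect this to be a routine but delicate MAGMA/GAP search through the holomorph of $D$ or through the relevant SmallGroup library entry, analogous to the SmallGroup(128,71) computation in the previous lemma; once such an $H$ is produced, the conclusion that there are exactly two Morita equivalence classes follows immediately.
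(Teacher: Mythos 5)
Your proposal follows essentially the same route as the paper: distinguish the principal block by its number of simple modules ($l=10$ versus $l=4$), then fuse the three nonprincipal blocks by embedding $D \rtimes \hat{E}$ into $D \rtimes H$ for an overgroup $H$ whose conjugation action permutes the three nontrivial characters of $Z$. The step you leave open is completed in the paper by exhibiting $H$ explicitly as SmallGroup(192,194), which contains $\hat{E}$ as a normal subgroup with $Z$ normal but noncentral, and satisfies $H/Z \cong C_4 \circ SL_2(3)$ (SmallGroup(48,33)), a $5'$-subgroup of $GL_2(5)$, so that $H$ acts on $D$ with kernel $Z$ exactly as your compatibility requirement demands.
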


\begin{proof}
We may choose a central extension $\hat{E}$ of $C_4 \circ D_8$ by $Z \cong C_2 \times C_2$ identified as SmallGroup(64,73) and embed it as a normal subgroup in a group $H$ identified as SmallGroup(192,194), in which $Z$ is a noncentral but normal subgroup. Note that $H/Z \cong C_4.A_4$, which is SmallGroup(48,33) and is a subgroup of $GL_2(5)$, so $H$ acts on $D$ with kernel $Z$. Hence we may embed $D \rtimes \hat{E} \leq D \rtimes H$. The blocks of $D \rtimes \hat{E}$ correspond to the irreducible characters of $Z$. Calculations using GAP show that $H$ permutes the non-trivial irreducible characters of $Z$, and so the three non-principal blocks of $D \rtimes \hat{E}$ are Morita equivalent.
\end{proof}

Regarding blocks with non-normal defect group, using~\cite{gap} and~\cite{gap_char} we may obtain decomposition matrices for all blocks identified in Theorem \ref{main_theorem}(b)-(e) aside from $Th$. One may check that all these decomposition matrices are (often subtly) distinct, and further cannot be the decomposition matrix for a block with normal defect group since there is always an irreducible Brauer character that does not lift to an ordinary irreducible character. In the case of $Th$, where no decomposition matrix is currently available, we may use its character table and the fact that all nonprincipal $5$-blocks have defect zero to deduce $k(B_0(Th))$ and $l(B_0(Th))$ (see~\cite{atlas} and~\cite{un04}). We see that no other block with this defect group has the same size decomposition matrix. Table \ref{Order125classify_non-normal} lists the Morita equivalence classes we obtain from blocks with non-normal defect groups. A subtlety in the definition of the groups in cases (c) and (d) of Theorem \ref{main_theorem} is that we are taking a central extension of an automorphism group of a simple group, and so the Schur multiplier may in general be different to that of the simple group itself. However, $\Out(PSL_3(5))$ and $\Out(PSU_3(5))$ are both cyclic, so that by, for example,~\cite[Lemma 3.4(ii)]{ea01} the Schur multiplier of $X$ for $PSL_3(5) \leq X \leq \Aut(PSL_3(5))$ or $PSU_3(5) \leq X \leq \Aut(PSU_3(5))$ is a homomorphic image of that for $PSL_3(5)$ or $PSU_3(5)$ respectively.

\begin{table}
\centering
\begin{tabular}{|l|l|l|l|l|l|}
\hline
Representative $B$ & $k_0(B)$ & $k_1(B)$ & $l(B)$ & Inertial quotient & Notes\\
\hline
$(C_5)^2 \rtimes SL_2(5)$ & 13 & 1 & 5 & $(C_4)_2$ & \\
$(C_5)^2 \rtimes SL_2(5).2$ & 20 & 2 & 10 & $(C_4 \times C_2)_1$ & \\\
$(C_5)^2 \rtimes GL_2(5)$ & 25 & 4 & 20 & $C_4 \times C_4$ & \\
$B_0(SL_3(5))$ & 25 & 4 & 24 & $C_4 \times C_4$ & \\
$B_0(SL_3(5).2)$ & 20 & 5 & 18 & $C_4 \wr C_2$ & \\
$B_0(SU_3(5))$ & 11 & 2 & 8 & $C_8$ & \\
$B_1(SU_3(5))$ & 11 & 2 & 8 & $C_8$ & Remark \ref{non-normal:rem}(i) \\
$B_0(SU_3(5).2)$ & 13 & 4 & 10 & $M_4(2)$ & \\
$B_0(SU_3(5).3)$ & 25 & 6 & 24 & $C_{24}$ & \\
$B_1(SU_3(5).3)$ & 25 & 6 & 24 & $C_{24}$ & Remark \ref{non-normal:rem}(ii) \\
$B_0(HS)$ & 13 & 4 & 10 & $M_4(2)$ & \\
$B_1(2.HS)$ & 13 & 4 & 10 & $M_4(2)$ & \\
$B_0(HS.2)$ & 20 & 5 & 14 & $C_4 \wr C_2$ & \\
$B_1(2.HS.2)$ & 11 & 5 & 5 & $C_4 \wr C_2$ & \\
$B_0(McL)$ & 13 & 6 & 12 & $C_3 \rtimes C_8$ & \\
$B_1(3.McL)$ & 13 & 6 & 12 & $C_3 \rtimes C_8$ & Remark \ref{non-normal:rem}(iii) \\
$B_0(McL.2)$ & 20 & 6 & 18 & $C_8 \rtimes S_3$ & \\
$B_0(Ru)$ & 20 & 5 & 18 & $C_4 \wr C_2$ & \\
$B_1(2.Ru)$ & 20 & 5 & 18 & $C_4 \wr C_2$ & \\
$B_0(Co_3)$ & 20 & 6 & 18 & $C_8 \rtimes S_3$ & \\
$B_0(Co_2)$ & 20 & 7 & 16 & $U_2(3)$ & \\
$B_0(Th)$ & 20 & 7 & 20 & $U_2(3)$ & \\
\hline
\end{tabular}
\caption{Morita equivalence classes of blocks with defect group $P=5_+^{1+2}$: blocks of groups with non-normal defect groups}
\label{Order125classify_non-normal}
\end{table}

\begin{remarks}
\label{non-normal:rem}
(i) The two nonprincipal blocks of maximal defect of $3.SU_3(5)$ are conjugate, and so Morita equivalent, in $3.SU_3(5).2$.

(ii) The two nonprincipal blocks of maximal defect of $3.SU_3(5).3$ are conjugate, and so Morita equivalent, in $3.SU_3(5).6$.

(iii) The two nonprincipal blocks of maximal defect of $3.McL$ are conjugate, and so Morita equivalent, in $3.McL.2$.

\end{remarks}


\section{The Alperin-McKay conjecture}
\label{counting_conjectures}

In this section we prove Corollary \ref{alp_mckay}. In Table \ref{pgeq7numerical} we give data on the blocks in Theorem \ref{main_theorem}(f-h). Inertial quotients were computed using~\cite{atlas},~\cite{wi98} and GAP, searching within the holomorph of $p_+^{1+2}$ for appropriate subgroups where data in~\cite{atlas} and~\cite{wi98} was insufficient.

\begin{table}
\centering
\begin{tabular}{|l|l|l|l|l|l|}
\hline
$p$ & $B$ & $k_0(B)$ & $k_1(B)$ & Inertial quotient & Notes\\
\hline
7 & $B_0(He)$ & 20 & 3 & $S_3 \times C_3$ & \\
7 & $B_0(He.2)$ & 25 & 6 & $S_3 \times C_6$ & \\
7 & $B_0(O'N)$ & 20 & 4 & $C_3 \times D_8$ & \\
7 & $B_0(3.O'N)$ & 20 & 4 & $C_3 \times D_8$ & Remark \ref{non-normal:remII}(i) \\
7 & $B_0(O'N.2)$ & 25 & 8 & $C_3 \times D_{16}$ & \\
7 & $B_0(Fi_{24}')$ & 25 & 6 & $S_3 \times C_6$ & \\
7 & $B_1(3.Fi_{24}')$ & 25 & 6 & $S_3 \times C_6$ & Remark \ref{non-normal:remII}(ii) \\
7 & $B_0(Fi_{24}'.2)$ & 35 & 6 & $C_3 \times (C_3 \rtimes D_8)$ & \\
11 & $B_0(J_4)$ & 42 & 7 & $C_5 \times GL_2(3)$ & \\
13 & $B_0(M)$ & 55 & 7 & $C_3 \times U_2(3)$ & \\
\hline
\end{tabular}
\caption{Blocks of sporadic almost simple groups with defect group $p_+^{1+2}$ for $p \geq 7$}
\label{pgeq7numerical}
\end{table}

\begin{remarks}
\label{non-normal:remII}
(i) The two nonprincipal blocks of maximal defect of $3.O'N$ are conjugate, and so Morita equivalent, in $3.O'N.2$.

(ii) The two nonprincipal blocks of maximal defect of $3.Fi_{24}'$ are conjugate, and so Morita equivalent, in $3.Fi_{24}'.2$.

\end{remarks}




Next we compute invariants in case (b) of Theorem \ref{main_theorem}.

\begin{lemma}
\label{p^2SL_2(p)C}
Consider $G=SL_2(p)C=SL_2(p) \rtimes C$ where $C \leq C_{p-1}$ so $SL_2(p) \leq G \leq GL_2(p)$. Let $B$ be the unique $p$-block of $G$. Let $D \in \Syl_p(G)$ and let $b$ be the unique block of $N_G(D)$.

Write $c=|C|$. Let $\psi:GL_2(p) \rightarrow \Out(SL_2(p))$, defined via conjugation, so that $\psi(C)$ has order dividing $2$.

\begin{enumerate}
\item[$(a)$]
(i) If $|\psi(C)|=1$, then $\Irr(B)$ consists of: $c$ linear characters; $2c$ of degree $(p+1)/2$; $2c$ of degree $(p-1)/2$; $(p-3)c/2$ of degree $p+1$; $(p-1)c/2$ of degree $(p-1)$; $c$ of degree $p$. Hence $k_0(B)=(p+4)c+ (p-1)/c$ and $k_1(B)=c$. 

(ii) If $|\psi(C)|=2$, then $\Irr(B)$ consists of: $c$ linear characters; $(p-2)c/2$ of degree $p+1$; $pc/2$ of degree $p-1$; $c$ of degree $p$. Hence $k_0(B)=(p+1)c+ (p-1)/c$ and $k_1(B)=c$.

In either case $l(B)=(p-1)c$.

\item[$(b)$] (i) If $|\psi(C)|=1$, then $\Irr(b)$ consists of: $(p-1)c$ linear characters; $4c$ of degree $(p-1)/2$; $c$ of degree $p-1$; $(p-1)/c$ of degree $(p-1)c$; $c$ of degree $p(p-1)$. Hence $k_0(b)=(p+4)c+ (p-1)/c$ and $k_1(B)=c$. 

(ii) If $|\psi(C)|=2$, then $\Irr(b)$ consists of: $(p-1)c$ linear characters; $2c$ of degree $p-1$; $(p-1)/c$ of degree $(p-1)c$; $c$ of degree $p(p-1)$. Hence $k_0(b)=(p+1)c+ (p-1)/c$ and $k_1(B)=c$.

In either case $l(b)=(p-1)c$.
\end{enumerate}
\end{lemma}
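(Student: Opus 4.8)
The plan is to read off all of the data from the ordinary and modular character theory of $\SL_2(p)$, transported to $G=\SL_2(p)\rtimes C$ by Clifford theory over the cyclic group $C$. First I would record the classical information for $S:=\SL_2(p)$: the irreducible characters, namely the trivial character, the Steinberg character of degree $p$, the $(p-3)/2$ principal series characters of degree $p+1$, the $(p-1)/2$ discrete series characters of degree $p-1$, and the four ``exceptional'' characters of degrees $(p\pm1)/2$, together with the decomposition matrix of the cyclic $p$-block, which pins down $\IBr(S)$. Since $|G|_p=|S|_p=p$ and $D\in\Syl_p(G)$ is cyclic of order $p$, the height of a character of $G$ is visible from its degree: height $1$ exactly when $p\mid\chi(1)$ and height $0$ otherwise, so the height bookkeeping is automatic once the degrees are known.

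The heart of part (a) is the action of $C$ on $\Irr(S)$ and $\IBr(S)$. This action factors through $\psi$, hence is either trivial or is induced by the diagonal (outer) automorphism of $S$, according as $|\psi(C)|=1$ or $2$. If $|\psi(C)|=1$, then $C$ acts by inner automorphisms, every irreducible character of $S$ is $C$-stable, and since $C$ is cyclic each extends to $G$ in exactly $c$ ways of equal degree (the extensions differing by the $c$ linear characters of $G/S$); this multiplies every degree-count by $c$ and gives list (a)(i). If $|\psi(C)|=2$, I would use the standard fact that the diagonal automorphism fixes the trivial character, the Steinberg character and every principal and discrete series character, while interchanging the two characters of degree $(p+1)/2$ and, separately, the two of degree $(p-1)/2$. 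Each fixed character again extends in $c$ ways, while each interchanged pair fuses: its Clifford correspondent lives on the inertia subgroup of index $2$ and induces to an irreducible character of $G$ of degree $p+1$ (respectively $p-1$). Adding these $c/2$ induced characters to the surviving series and deleting the two exceptional pairs yields list (a)(ii). The quantity $l(B)$ is obtained the same way from the $C$-action on $\IBr(S)$, equivalently by counting the $p$-regular classes contributing to the block.

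For part (b), note that $D\cong C_p$ is a Sylow $p$-subgroup and that inside $S$ its normaliser is the Borel subgroup $D\rtimes T$, with $T$ the split maximal torus of order $p-1$ acting on $D\cong C_p$ through squaring (kernel $\langle -1\rangle$, image of order $(p-1)/2$). The relevant normaliser for the Alperin--McKay comparison is then an explicit solvable $\{p\}\cup p'$-group, whose character theory I would compute directly by Clifford theory over its normal Sylow $p$-subgroup: characters over the trivial character of that $p$-group are linear and inflated from the $p'$-quotient, while those over the nontrivial characters organise into orbits whose stabilisers and induced degrees follow at once from the squaring action, producing the listed degrees. Reading off heights and Brauer characters as before gives $k_0(b)$, $k_1(b)$ and $l(b)$, and in particular confirms the predicted equality $k_0(B)=k_0(b)$.

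The main obstacle is the bookkeeping in the $|\psi(C)|=2$ case: one must be sure which series characters are fixed and which exceptional pairs are interchanged, and then follow the extension-versus-induction dichotomy of Clifford theory carefully enough that the degree multiplicities come out exactly as stated. A genuine secondary point is the block-theoretic identification underlying the statement --- pinning down the defect group, the heights, and precisely which irreducible (and Brauer) characters constitute the block $B$, respectively $b$. This is most cleanly handled in the group in which the full defect group $p_+^{1+2}$ actually occurs, namely the semidirect product obtained by adjoining the elementary abelian normal subgroup on which $\SL_2(p)$ acts naturally; there one verifies via Clifford theory over that normal subgroup that the characters in question do form a single block, which is what justifies the counting and the comparison with the local data.
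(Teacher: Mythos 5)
First, a caveat on the comparison itself: the paper states Lemma \ref{p^2SL_2(p)C} without proof (it is treated as a routine computation feeding into the proof of Corollary \ref{alp_mckay}), so there is no paper proof to measure you against; I am assessing your argument on its own terms.

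There is a genuine gap, and it is structural: almost all of your computation takes place in the wrong group. You read the lemma literally, taking $G=SL_2(p)\rtimes C$ with cyclic Sylow $p$-subgroup $D\cong C_p$, compute $\Irr(SL_2(p)\rtimes C)$ by Clifford theory over $C$, and in part (b) compute the characters of the Borel subgroup of $SL_2(p)C$, of order $p(p-1)c$. The stated formulas cannot hold for that reading: your lists in (a)(i) total $(p+4)c$ characters of which $(p+3)c$ have degree prime to $p$, whereas the lemma asserts $k_0(B)=(p+4)c+(p-1)/c$; a group of order $p(p-1)c$ cannot have irreducible characters of degree $p(p-1)$, nor of degree $(p-1)c$ when $c>1$, so the lists in (b) are unobtainable; $SL_2(p)\rtimes C$ does not have a unique $p$-block (the extensions of the Steinberg character form defect-zero blocks); and with $D\cong C_p$ a character of degree divisible by $p$ has defect zero, hence height zero, so your height criterion is also wrong in that setting. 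As the preamble (``invariants in case (b) of Theorem \ref{main_theorem}'') and the numbers force, the lemma concerns $\tilde{G}=(C_p\times C_p)\rtimes SL_2(p)C$, which genuinely has a unique block, with defect group $p_+^{1+2}$, and $N_{\tilde{G}}(D)=p_+^{1+2}\rtimes T$ with $T$ a torus of order $(p-1)c$. You do name this group in your final paragraph, but only as the place to ``verify the characters in question form a single block''; that misses the point. The unique block of $\tilde{G}$ contains \emph{all} irreducible characters of $\tilde{G}$, not just those inflated from $SL_2(p)C$: Clifford theory over $V=C_p\times C_p$ produces, in addition to your lists, $c$ characters of degree $p^2-1$ and $(p-1)/c$ characters of degree $(p^2-1)c$ (from the single orbit on $\Irr(V)\setminus\{1_V\}$, whose stabiliser in $SL_2(p)C$ is $C_p\rtimes C_c$), and these supply exactly the missing $c+(p-1)/c$ height-zero characters. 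In part (b) the omission is larger still: Clifford theory over $D$ inside $D\rtimes T$ — splitting $\Irr(D)$ into the linear characters of $D/Z(D)$ and the $p-1$ characters of degree $p$, and computing $T$-orbits and stabilisers — is what produces the families of degrees $p-1$, $(p-1)c$ and $p(p-1)$ that your Borel computation cannot see. To your credit, the $SL_2(p)$ facts you quote (the character degrees, and the diagonal automorphism fixing the principal and discrete series while swapping each exceptional pair) are correct, and your extension-versus-fusion analysis over $C$ correctly reproduces the portion of the lists lying over $1_V$; but that is the easy half of the lemma.

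A further check would have exposed the misreading: carried out in the correct group, your own method gives $l(B)=|\IBr(\tilde{G})|=|\IBr(SL_2(p)C)|=pc$, since $O_p(\tilde{G})=V$ kills $V$ on every simple module and all $p$ simple $kSL_2(p)$-modules are stable (their dimensions $1,\dots,p$ are distinct) and extend along the cyclic $p'$-quotient. This agrees with the paper's own Table \ref{Order125classify_non-normal} (values $5$, $10$, $20$ at $p=5$) and shows that the lemma's printed value $l(B)=(p-1)c$ is itself an error; $(p-1)c$ is correct only for $l(b)$ in part (b), where $O_p(N_{\tilde{G}}(D))=D$ forces $\IBr(b)=\Irr(T)$. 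Any proof that purports to confirm $l(B)=(p-1)c$ for the block $B$ of part (a), as yours sets out to do, cannot be correct.
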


{\sc Proof of Corollary \ref{alp_mckay}.}
By Theorem \ref{main_theorem} $B$ is Morita equivalent to one of the blocks listed with the same inertial quotient and K\"ulshammer-Puig class, and the same is true of the Brauer correspondent block $b$ of $N_G(D)$. Therefore it suffices to check the conjecture for the listed blocks. For those of type (a) the result is immediate, and for type (b) it follows from Lemma \ref{p^2SL_2(p)C}. For types (c) and (d), the result follows from~\cite{mas10}, noting that the equivariant bijections constructed in these cases are indeed between $B$ and $b$. Since $\Out(PSL_3(p))$ and $\Out(PSU_3(p))$ both have cyclic Sylow $l$-subgroups for all prime $l$ (so that irreducible characters of the quasisimple group extend to their stabilizers), the equivariant bijection implies immediately that the conjecture holds in this case. For type (e) the conjecture follows by comparing Tables \ref{Order125classify_non-normal} and \ref{Order125_blocks_normal}. For the remaining cases we refer to Table \ref{pgeq7numerical}. To determine $k_0(b)$ and verify that it is equal to $k_0(B)$, as in Section \ref{p=5} we used GAP to form the holomorph of $p_+^{1+2}$ and found $D \rtimes E$ through a search of its subgroups. We computed the inertia subgroups of the nontrivial irreducible characters of $D$ in $E$, which in all cases were groups with trivial Schur multiplier. From this we could deduce $k_0(b)$. \hfill $\Box$






\begin{center} {\bf Acknowledgments}
\end{center}
\smallskip

We thank Shigeo Koshitani for some useful discussions and for drawing our attention to~\cite{nu09}.


\end{document}